\newtheorem{teorema}{Theorem}[section]
\newtheorem{cor}[teorema]{Corollary}%[section]
\newtheorem{lem}[teorema]{Lemma}%[section]
\newtheorem{prop}[teorema]{Proposition}%[section]
\theoremstyle{definition}
\newtheorem{defin}[teorema]{Definition}%[section]
\newtheorem{re}[teorema]{Remark}
\newtheorem{Exs}[teorema]{Examples}
\newcommand{\kmodto}{\mathrel{\mathmakebox[\widthof{$\xrightarrow{\rule{1.45ex}{0ex}}$}]{\xrightharpoonup{\rule{1.45ex}{0ex}}\hspace*{-2.8ex}{\circ}\hspace*{1ex}}}}
\newcommand{\Multi}[1]{(L,{#1})\mbox{-} \mathtt{Cat}}
\newcommand{\Vcat}{V \mbox{-} \mathtt{Cat}}
\newcommand{\Vcats}{V \mbox{-} \mathtt{Cat}_{\mathtt{sep}}}
\newcommand{\Sets}{\mathtt{Set}}
\newcommand{\Dist}[1]{{#1} \mbox{-}\mathtt{Dist}}
\newcommand{\Mat}[1]{ {#1} \mbox{-}\mathtt{Rel}}
\newcommand{\Mon}[2]{\mathtt{Mon}({#1}, \boxtimes_{#2}, V)}
\newcommand{\Alg}[1]{\mathtt{Set}^{\mathtt{P}_{#1}}}
\newcommand{\mult}{\ast}
\newcommand{\Yo}[1]{\mathbf{y}_{#1}}
\newcommand{\CoYo}[1]{\mathbf{\lambda}_{#1}}
\newcommand{\op}{\mathtt{op}}
\newcommand{\Pow}[1]{ \mathtt{P}_{{#1} }}
\newcommand{\Coco}[1]{ \mathtt{CoCts}({#1})}
\newcommand{\Sup}{ \mathtt{Sup}}
\newcommand{\Tensor}[1]{ \boxtimes_{#1}}
\newcommand{\Mod}{V \mbox{-}\mathtt{Mod}}
\newcommand{\Inj}[1]{\mathtt{CoCts}((L,{#1})\mbox{-}\mathtt{Cat}_{\mathtt{sep}})}
\newcommand{\Quant}[1]{(V \downarrow \mathtt{#1})_{\spadesuit}}
\newcommand{\Bim}{\mathtt{Bim}}
\newcommand{\dist}{\circ}
\newcommand{\ldist}{\bullet}
\DeclareMathOperator{\Id}{Id}
\def\slashedarrowfill@#1#2#3#4#5{%
	$\m@th\thickmuskip0mu\medmuskip\thickmuskip\thinmuskip\thickmuskip
	\relax#5#1\mkern-7mu%
	\cleaders\hbox{$#5\mkern-2mu#2\mkern-2mu$}\hfill
	\mathclap{#3}\mathclap{#2}%
	\cleaders\hbox{$#5\mkern-2mu#2\mkern-2mu$}\hfill
	\mkern-7mu#4$%
}
\def\rightslashedarrowfill@{%
	\slashedarrowfill@\relbar\relbar\mapstochar\rightarrow}
\newcommand\xslashedrightarrow[2][]{%
	\ext@arrow 0055{\rightslashedarrowfill@}{#1}{#2}}
\newcommand\restr[2]{{% we make the whole thing an ordinary symbol
		\left.\kern-\nulldelimiterspace % automatically resize the bar with \right
		#1 % the function
		\vphantom{\big|} % pretend it's a little taller at normal size
		\right|_{#2} % this is the delimiter
}}
\newcommand{\scomment}[1]{\ \ifhmode\todo{#1}\else\vadjust{\todo{#1}}\fi}
\tikzset{%
	symbol/.style={%
		draw=none,
		every to/.append style={%
			edge node={node [sloped, allow upside down, auto=false]{$#1$}}}
	}
}
\title{Quantale-Enriched Multicategories Via Actions}
\author{Eros Martinelli}
\providecommand{\keywords}[1]{\textbf{\textit{Keywords:}} #1}
\affil{Center for Research and Development in Mathematics and
Applications,\\
Department of Mathematics,\\
University of Aveiro,\\
Portugal.\\
\textit{E-mail adress}: \texttt{eros.martinelli@ua.pt}}
\date{}
\begin{document}

\maketitle
\begin{abstract}
  In this communication, motivated by a classical result that relates cocomplete quantale-enriched categories to modules over a quantale, we prove a similar result for quantale-enriched multicategories.

\end{abstract}
\keywords{Quantales, Quantale-Enriched Categories, Quantale-Enriched Multicategories, Strong Monads.}
\medskip

\section{Introduction}
Lawvere, in his seminal paper \cite{MSCC}, made the important observation that fundamental mathematical structures do not only constitute the objects of a category but are themselves categories. In fact, it has been known for a long time that ordered sets can be seen as categories enriched in the two element boolean algebra; moreover, monotone maps between them are exactly enriched functors. As a leading example, Lawvere explains how metric spaces fit into his thesis by showing how they are instances of enriched categories and how results from enriched category theory are able to capture important metric constructions.\par \medskip
We must point out that, although it is possible to develop enriched category theory in the more general setting in which the enrichment is taken in a closed symmetric monoidal category, in many cases it is sufficient to take the enrichment in a commutative quantale $V$, that is to say a monoid in the monoidal category of suplattices. This leads to the notion of quantale-enriched categories which can be seen as a generalization of the notion of ordered sets where one substitutes the ordered relation with a more general relation---called enriched structure---with values in the quantale $V$.\par \medskip
Since quantale-enriched categories are a generalization of ordered sets, it is natural to ask which relations there are between the two. The very first observation is that to every quantale-enriched category $X$ we can associate an ordered set, called the underlying ordered set of $X$; its order relation relates elements of $X$ whose value under the enriched structure of $X$ is greater or equal than the unit of $V$. This construction is part of a right adjoint functor between the category of quantale-enriched categories and the category of ordered sets. Due to the form this functor has, any hope to recovery the structure of a quantale-enriched category $(X,a)$ from its underlying ordered set is going to be disappointed. In order to mantain such hope, we we must add some structure to the category of ordered sets; a structure that must contain the information which gets lost in the discretization procedure: the values of the enriched relation at elements of $X$.\par \medskip
The solution to this problem is to consider ordered sets equipped with a suitable action of the base quantale subject to conditions that allow us to define a copowered enriched category, where the copower becomes the action itself. This association will give us an equivalence between the category of ordered sets equipped with such an action and the category of copowered categories (see \cite{GORDON1997167} for the general construction).
The aforementioned equivalence restricts to an equivalence between the category of cocomplete quantale-enriched categories and the category of cocomplete ordered sets equipped with an action of the base quantale, also called the category of modules (see \cite{Pedicchio1989}).
These last two equivalences allow us to reason about enriched categories by using order theoretic arguments. An example where this is not only useful, but it has proven to be essential, is given by the results contained in \cite{HN18}, where, in order to obtain the duality between metric compact Hausdorff spaces and (suitably defined) finitely cocomplete categories enriched in the unit interval $[0,1]$, the representation of the latter as ordered sets with an action of $[0,1]$ is essential.\par \medskip
In \cite{DGAMD} D. Hofmann and G. Gutierres proved that a similar result holds also for approach spaces. Approach spaces are particular examples of $(T,V)$-categories (see \cite{hofmann2014monoidal}) where the monad $T$ is specialized to the ultrafilter monad $U$ and $V$ is specialized to the quantale $[0, \infty]^{\op}$. For these categories, Clementino, Hofmann and Tholen showed how it is possible to develop many constructions that come from enriched category theory in the more general context of $(T,V)$-categories (see \cite{HOFMANN2011283, CH09a, Hof14}). In particular, in \cite{HOFMANN2011283}, Hofmann showed how algebras for a Kock-Zöberlein monad, which generalizes the presheaf monad, characterize cocomplete $(T,V)$-categories. By using the machinery of $(T,V)$-categories, D. Hofmann and G. Gutierres proved that separated (i.e. $T_0$) cocomplete approach spaces are equivalent to continuous lattices (cocomplete topological spaces in the $(U,2)$ setting) equipped with an action of the quantale $[0, \infty]^{\op}$. \par \medskip
The aim of this paper is to prove that a similar result holds also for quantale-enriched multicategories. We also notice that quantale-enriched multicategories, from now on called $(L,V)$-categories, are particular examples of $(T,V)$-categories where the monad $T$ is specialized to the list monad $L$.
We prove that the category $\Inj{V}$ of cocomplete separated $(L,V)$-categories is equivalent to the category of quantales (ordered cocomplete multicategories) equipped with a suitable action of $V$ and denoted $\Mod(\mathtt{Quant})$.\par \medskip
We must point out that, although approach spaces and quantale-enriched multicategories are both examples of $(T,V)$-categories, the strategy used to prove the main result of this paper bears little relationship to the one used in \cite{DGAMD}; while the latter relies on a careful study of weighted $(U,[0, \infty]^{\op})$-colimits, the former essentially relies on the fact that we can internalize the notion of monoid in every monoidal category. The deep reason why approach spaces and quantale-enriched multicategories behave differently is an interesting open question the author wants to investigate; the hope is to provide a more general theory of ``actions'' for $(T,V)$-categories.\par \medskip
%By using this characterization of cocomplete $(L,V)$-categories as algebras for a monad, we prove that quantales (in fact, their enriched counterparts) are exactly the cocomplete $(L,V)$-categories.

The structure of the paper is as follows:
\begin{itemize}
	\item In the first section we introduce some background material on $V$-categories. We briefly sketch the equivalence between the category of cocomplete quantale-enriched categories and the category of modules: \[\Coco{\Vcats} \simeq \Mod.\]
	\item In the second section we introduce $(L,V)$-categories. We show how many constructions that come from enriched category theory can be developed in the more general context of $(L,V)$-categories.
	\item The third section contains the first step towards our desired result. We analyze further the equivalence $ \Coco{\Vcats} \simeq \Mod$. First we prove that both categories can be equipped with a monoidal structure, then we prove that the aforementioned equivalence extends to the corresponding categories of monoids: \[\Mon{\Mod}{V} \simeq \Mon{\Coco{\Vcats}}{V}.\]
	\item In the fourth section we study further the category $\Mon{\Mod}{V}$. We prove that ${\Mon{\Mod}{V}}$ is equivalent to a particular subcategory of $V \downarrow \mathtt{Quant}$.
	\item In the fifth section we study further the category $\Mon{\Coco{\Vcats}}{V}$. We prove that it is monadic over $\Sets$ and that it is equivalent to $\Inj{V}$, the category of cocomplete separated $(L,V)$-categories.
	\item In the last section we collect everything together and prove that $\Inj{V}$ is equivalent $\Mod(\mathtt{Quant})$, the category of quantales equipped with a suitable action of $V$.
	\item In the appendix we recall some useful materials from \cite{JACOBS199473} about strong commutative monads we use across the paper.
\end{itemize}
\section{Preliminaries on Quantale-Enriched Categories}
In this section we recall/introduce some basic notions of $V$-categories. Our point of view is slightly different from the more "standard" one contained in \cite{ECT}, it is more "relational": following \cite{BETTI1983109, CLEMENTINO200313}, we introduce the \textit{quantaloid} of $V$-relations and we define $V$-categories starting from there. This might be seen as an overkill, but it will be clear in the section related to $(L,V)$-categories how this approach allows us to smoothly introduce some concepts also in the $(L,V)$-case.
\subsection{V-Categories and V-Functors}
\begin{defin}
	A quantale $(V, \otimes, k)$ is a complete lattice endowed with a multiplication $\otimes : V \times V \rightarrow V$ that preserves suprema in each variable and for which $k \in V$ is the neutral element. If $k \neq \perp,$ we call $V$ non-trivial.
\end{defin}
\begin{re}
	When we talk about quantale-enriched categories we always assume our base quantale $V$ to be commutative.
\end{re}
\begin{re}
	In this paper we assume---unless explicitly stated---our quantales to be non-trivial.
\end{re}
\begin{re}
	By the adjoint functor theorem applied to ordered sets, it follows that $- \otimes =$ admits a right adjoint (in each variable) denoted by $[-,=]$ and called "internal hom".
\end{re}
\begin{defin}
	Let $(V, \otimes_V, k_V)$ and $(Q, \otimes_Q, k_Q)$ be quantales. A \textit{morphism of quantales} is a suprema preserving map $f : V \rightarrow Q$ such that, for all $v,w \in V$,
	$$ f (v) \otimes_Q f(w) = f(v \otimes_V w), \ \ k_Q  = f(k_V).$$
\end{defin}
\begin{Exs}
	\begin{enumerate}
		\item The two-element boolean algebra $\mathbf{2} = \{0,1\}$ with $\wedge$ as multiplication and $\Rightarrow$ as internal hom is a quantale.
		\item More generally, every frame becomes a quantale with the multiplication given by $\wedge$. In this case we have $k = \top,$ where $\top$ is the top element of the frame.
		\item $[0, \infty]^{\op}$ (with the opposite of the natural order) with $+$ as multiplication is a quantale. The internal hom is given by "truncated minus" defined as $ [u,v] =v\ominus u = \mathtt{max}(v-u,0)$.
		\item Consider the set
		$$\Delta = \{\psi :[0, \infty] \rightarrow [0,1] \mbox{ $\mid$ } \mbox{for all } \alpha \in [0, \infty] \mbox{ : } \psi(\alpha) = \bigvee_{\beta < \alpha } \psi(\beta)\}$$
		of distribution functions. With the pointwise order it becomes a complete ordered set. For all ${\psi, \phi \in \Delta}$ and $\alpha \in [0, \infty]$, define:
		$$ \psi \otimes \phi (\alpha) = \bigvee_{\beta + \gamma < \alpha } \psi(\beta )\mult \phi(\gamma),$$
		where $\mult$ is the ordinary multiplication on $[0,1]$.
		It is shown in \cite{De} that $(\Delta, \otimes, k)$ is a quantale, where $k(0) =0$ and, for all $\alpha > 0$, $k(\alpha) = 1$.
\end{enumerate}
\end{Exs}
As we stated in the introduction of this section, we are going to present $V$-categories from a more "relational" point of view. The first step is to define the \textit{quantaloid of $V$-relations} which is the enriched generalization of the category $\mathtt{Rel}$ of (ordinary) binary relations. For an account on quantaloids we refer to \cite{STUBBE201495} for a brief overview and to \cite{DF} for a more in depth description.\par\medskip
The quantaloid $\Mat{V}$ is the order-enriched category whose objects are sets, and an arrow $r : X \xslashedrightarrow{} Y$ is given by a function
$$ r : X \times Y \rightarrow V.$$
The composition of $r : X \xslashedrightarrow{} Y $, $s : Y \xslashedrightarrow{} Z$ is given by "matrix multiplication" and is defined pointwise as
$$ s \dist r (x,z) = \bigvee_{y \in Y} r(x,y) \otimes s(y,z).$$
The identity arrow $ \Id : X \xslashedrightarrow{} X$ is
$$
\Id(x_1,x_2) =
\begin{cases}
k &\mbox{ if } x_1 = x_2, \\
\perp &\mbox{ if } x_1 \neq x_2.
\end{cases}
$$
The complete order on $\Mat{V}(X,Y)$ is the one induced (pointwise) by $V$, i.e.
\begin{equation}
\label{Ord}
 r \leq r' \mbox{ in } \Mat{V}(X,Y) \mbox{ whenever } r(x,y) \leq r'(x,y) \mbox{ in } V \mbox{ for all } x,y \in X,Y.
\end{equation}
\begin{re}
	Notice that $\Mat{V}(X,Y)$ is complete because $V$ is so. Since the multiplication of $V$ preserves suprema in both variables and because suprema commute with suprema, one has
	$$(\bigvee_i r_i) \dist (\bigvee_j s_j) = \bigvee_{i,j} r_i \dist s_j.$$
	This proves that $\Mat{V}$ is a quantaloid.
 \end{re}
\begin{re}
	Notice that for $V = \mathbf{2}$, $\Mat{\mathbf{2}}$ is the quantaloid of relations, and the "matrix multiplication" defined previously becomes the "classical" relational composition.
\end{re}
\begin{re}
	Note that \eqref{Ord} is equivalent to
	$$ k \leq \bigwedge_{x,y \in X,Y} [r(x,y), r'(x,y)].$$
	Indeed, consider $x,y \in X,Y$, then we have
	$$ r(x,y) \leq r'(x,y) \iff  k \otimes r(x,y) \leq r'(x,y) \iff  k \leq [r(x,y), r'(x,y)].$$
\end{re}
\begin{re}
	Notice that every function $f : X \rightarrow Y$ can be seen as a $V$-relation as follows:
	$$
	f(x,y) =
	\begin{cases}
	k &\mbox{ if } f(x) =y, \\
	\perp &\mbox{ if } f(x)\neq y.
	\end{cases}
	$$
	The identity in $\Mat{V}(X,X)$ is an example of this construction.
\end{re}

We have also an involution $(-)^{\circ} : \Mat{V}^{\op} \rightarrow \Mat{V}$ defined as $r^{\circ}(y,x) = r(x,y)$, which satisfies
$$ (1_X)^{\circ} = 1_X, \ \ (s \dist r )^{\circ} = r^{\circ} \dist s^{\circ}, \ \ (r^{\circ})^{\circ} = r.$$
\begin{defin}
	A $V$-category is a pair $(X,a)$, where $X$ is a set and $ a : X \xslashedrightarrow{} X$ is a $V$-relation that satisfies
	\begin{itemize}
		\item $\Id \leq a;$
		\item  $a \dist a \leq a$.
	\end{itemize}
\end{defin}
\begin{re}
	In this paper, when the $V$-structure is clear from the context, we will denote a $V$-category $(X,a)$ simply as $X$.
\end{re}
\begin{defin}
	Let $(X,a)$ and $(Y,b)$ be $V$-categories. A $V$-\textit{functor} $f : (X,a) \rightarrow (Y,b)$ is a function between the underlying sets such that
	$$a \leq f^{\circ} \dist b \dist f,$$
	which, in pointwise terms, means that, for all $x,y \in X,$
	$$ a(x,y) \leq b(f(x), f(y)).$$
	If the equality holds, we call $f$ \textit{fully faithful}.
\end{defin}
\begin{Exs}\label{ExamplesVcat}
\begin{enumerate}
		\item For $V = \mathbf{2}$, a $\mathbf{2}$-category is an ordered set and a $\mathbf{2}$-functor is a monotone map. Notice that the order relation of a $\mathbf{2}$-category $(X, \leq_X)$ does not need to be antisymmetric.
		\item Categories enriched in the quantale $[0, \infty]^{\op}$, as first recognized by Lawvere in \cite{MSCC}, are generalized metric spaces and $[0, \infty]^{\op}$-functors between them are non-expansive maps.
		\item Categories enriched in $\Delta$ are probabilistic metric spaces, as first recognized in \cite{De}.
		\item The quantale $V$ defines a $V$-category with the $V$-structure given by its internal hom $[-, =]$.
		\item By using the involution $(-)^{\circ} : \Mat{V}^{\op} \rightarrow \Mat{V}$, for every $V$-category $(X,a)$, one can define its opposite category $X^{\op} = (X, a^{\circ}).$
		\item Let $(X,a)$ and $(Y,b)$ be $V$-categories. We define the $V$-category formed by all $V$-functors ${f : (X,a) \rightarrow (Y,b)}$, denoted by $([X,Y],[X,Y](-,=) )$, with the following $V$-structure:
		$$[X,Y](f,g) = \bigwedge_{x\in X} b(f(x),g(x)).$$
		In particular we have two very important $V$-categories:
		$$\mathbb{D}(X)= [X^{\op},V], \mbox{ the category of presheaves},$$
		$$\mathbb{U}(X) = [X,V]^{\op}, \mbox{ the category of co-presheaves.}$$
		Notice that they are generalizations (for a general $V$) of the classical down(up)-closed subsets construction that corresponds to the case in which $V= \mathbf{2}$.
		\item Given a $V$-category $(X,a)$, there are two $V$-functors, called the Yoneda embedding and the co-Yoneda embedding:
		$$ \Yo{X} : X \rightarrow \mathbb{D}(X), \ \ x \mapsto a(-,x),$$
		$$ \CoYo{X} : X \rightarrow \mathbb{U}(X), \ \ x \mapsto a(x,=).$$
		Moreover, one can prove that
		$$ \mathbb{U}(X)[\CoYo{X}(x),g] = g(x), \ \ \mathbb{D}(X)[\Yo{X}(x),g] = g(x).$$
		The last two results are known as the co-Yoneda lemma and Yoneda lemma, respectively. Notice that, for a general $X$, $\Yo{X}$ and $\CoYo{X}$ are not injective functions. They are injective iff $X$ is separated (see \cite[Proposition~1.5]{HT10}).
		\item \label{TensorMon} Let $(X,a)$ and $(Y,b)$ be $V$-categories. We define their tensor product
		$$ X \boxtimes Y = (X\times Y, a \otimes b).$$
		In particular, one has: $X \boxtimes K \simeq X$ where $K$ denotes the one-point $V$-category $(1,k)$.\par\medskip
		For $V = \mathbf{2}$, the ordered structure on $X \boxtimes Y$ is the product order. This means that $(x_1,y_1) \leq_{X \boxtimes Y} (x_2, y_2)$ if and only if $x_1 \leq_X x_2$ and $y_1 \leq_Y y_2$.\par\medskip
		For $V= [0, \infty]^{\op}$, the metric structure on $X \boxtimes Y$ is the taxicab metric, which is defined as:
		$$ d_{X \boxtimes Y }((x_1,y_1),(x_2, y_2) ) = d_X(x_1, x_2) + d_Y(y_1,y_2).$$
	\end{enumerate}

\end{Exs}
In this way we define $\Vcat$ as the category whose objects are $V$-categories and whose arrows are $V$-functors. Moreover, $\Vcat$ becomes an order-enriched category, if we define, for $V$-functors ${f,g : (X,a) \rightarrow (Y,b)}$,
$$ f \leq g \mbox{  whenever }  k \leq \bigwedge_{x\in X} b(f(x),g(x)).$$
With the tensor product previously defined, $\Vcat$ becomes a closed monoidal category, since one can show that, for $V$-categories $(X,a), (Y,b),(Z,c)$, one has
$$\Vcat(X \boxtimes Y,Z) \simeq \Vcat(X, [Y,Z]) \simeq \Vcat(Y, [X,Z]).$$
This allows us to define monoids with respect to such product, which we call monoidal $V$-categories.
\begin{defin}
A monoidal $V$-category $(X,a, \mult, u_X)$ is a $V$-category $(X,a)$ equipped with two $V$-functors: $\mult : X \boxtimes X \rightarrow X$ and $ u_X : K \rightarrow X$, such that $(X,a, \mult, u_X)$ is a monoid (with respect to the monoidal structure $(\boxtimes, K)$).
\end{defin}
\begin{re}
Notice that $K = (1,k)$ is a separator in $\Vcat$. This means that, for all pairs of parallel $V$-functors $f,g : X \rightarrow Y$ in $\Vcat$, if $f \cdot  z = g \cdot z$ for every $V$-functor $z : K \rightarrow X$, then $f = g$.
\end{re}
\begin{re}
	Notice that, for $V=\mathbf{2}$, a monoidal ordered set is just an ordered monoid. That is to say it is a monoid endowed with an order relation which is compatible with the monoid structure.\\
	For $V = [0, \infty]^{\op}$, a monoid in $ [0, \infty]^{\op} \mbox{-} \mathtt{Cat}$ is a metric space endowed with a monoid structure on its underlying set which is compatible with the metric. Examples of monoidal metric spaces are the underlying additive groups of normed vector spaces.
\end{re}
\subsection{Distributors and the Presheaf Monad}\label{Presheaf}
Bénabou introduced distributors in \cite{LD} and since then they played an important role in category theory. They can be seen as generalizations of ideal relations from order theory, that is to say, subsets of the cartesian product of ordered sets $X, Y$ which are downward closed in $X$ and upward closed in $Y$.\par\medskip
The presheaf construction is one of the cornerstones of category theory. A presheaf is a generalization of a downward closed subset on an ordered set $X$. In the following we are going to show how the presheaf construction is part of a monad defined on $\Vcat$.
\begin{defin}
Let $(X,a)$ and $ (Y,b)$ be $V$-categories. A $V$-\textit{distributor} (or simply a distributor) ${j  : (X,a) \xslashedrightarrow{} (Y,b)}$ is a $V$-relation between the underlying sets such that:
\begin{itemize}
	\item $j \dist a \leq j;$
	\item $b \dist j \leq j.$
\end{itemize}
\end{defin}
Since the composite of distributors is again a distributor, we define the quantaloid $\Dist{V}$ in the same way as we defined $ \Mat{V}.$ In $\Dist{V}(X,X)$ the $V$-structure $a$ plays the role of the identity, since for every distributor $j : X \xslashedrightarrow{} Y$, one has
$$ b \dist j = j \dist a = j.$$
Given a $V$-functor $f : (X,a) \rightarrow (Y,b)$, we define two arrows in $\Mat{V}$:
\begin{itemize}
\item $f_{*} : X \xslashedrightarrow{} Y,  \ \ f_{*}(x,y) = b(f(x),y);$
\item  $f^{*} : Y \xslashedrightarrow{} X, \ \ f^{*}(y,x) = b(y,f(x)).$
\end{itemize}
One has:
\begin{lem}
 The $V$-relations $f_{*}$ and $f^{*}$ are both distributors, moreover, $f_{*} \dashv f^{*}$ in $\Dist{V}.$
\end{lem}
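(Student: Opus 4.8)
The plan is to verify the two defining distributor inequalities for each of $f_{*}$ and $f^{*}$, and then the two inequalities witnessing the adjunction $f_{*} \dashv f^{*}$ in $\Dist{V}$. Each of these six inequalities reduces, after expanding the pointwise formula for $\dist$, to one of the three structural facts at our disposal: reflexivity $\Id \leq a$ (resp.\ $\Id \leq b$), transitivity $a \dist a \leq a$ (resp.\ $b \dist b \leq b$), and the defining inequality of a $V$-functor, $a(x,y) \leq b(f(x),f(y))$. It is convenient to note first that $f_{*} = b \dist f$ and $f^{*} = f^{\circ} \dist b$, where $f$ is regarded as a $V$-relation as in the earlier remark; this lets one run the whole argument in the relational calculus, but I will describe the transparent pointwise version.

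First I would show $f_{*}$ is a distributor. For $f_{*} \dist a \leq f_{*}$, expanding gives $(f_{*} \dist a)(x,y) = \bigvee_{x'} a(x,x') \otimes b(f(x'),y)$; bounding $a(x,x') \leq b(f(x),f(x'))$ by functoriality and then using $b(f(x),f(x')) \otimes b(f(x'),y) \leq b(f(x),y)$ (an instance of $b \dist b \leq b$) yields the claim. For $b \dist f_{*} \leq f_{*}$ one has $(b \dist f_{*})(x,y) = \bigvee_{y'} b(f(x),y') \otimes b(y',y) \leq b(f(x),y)$ directly by transitivity of $b$. The verification that $f^{*}$ is a distributor is entirely symmetric: $f^{*} \dist b \leq f^{*}$ again uses only $b \dist b \leq b$, while $a \dist f^{*} \leq f^{*}$ uses functoriality followed by transitivity of $b$.

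For the adjunction, recall that the identity of $(X,a)$ in $\Dist{V}$ is $a$ itself, so the unit and counit amount to $a \leq f^{*} \dist f_{*}$ and $f_{*} \dist f^{*} \leq b$. For the unit, $(f^{*} \dist f_{*})(x,x') = \bigvee_{y} b(f(x),y) \otimes b(y,f(x'))$, and keeping only the summand $y = f(x)$ gives the lower bound $b(f(x),f(x)) \otimes b(f(x),f(x')) \geq k \otimes a(x,x') = a(x,x')$, using $\Id \leq b$ for the first factor and functoriality for the second. For the counit, $(f_{*} \dist f^{*})(y,y') = \bigvee_{x} b(y,f(x)) \otimes b(f(x),y') \leq b(y,y')$ is immediate from $b \dist b \leq b$.

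There is no genuine obstacle here; the lemma is an instance of the general quantaloid fact that a map, viewed as a $V$-relation, is left adjoint to its involute, with $V$-functoriality supplying exactly the compatibility needed. The only point demanding care is bookkeeping: since $b$ need not be symmetric, one must track the variance of each argument of $b$ and the order of composition in $\dist$ correctly, so that functoriality and transitivity are applied with the arguments in the right slots.
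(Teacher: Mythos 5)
Your proof is correct: all six pointwise inequalities (the four distributor conditions and the unit/counit inequalities $a \leq f^{*} \dist f_{*}$ and $f_{*} \dist f^{*} \leq b$) are expanded and bounded with the right applications of reflexivity, transitivity, and $V$-functoriality, and you correctly use that the identities in $\Dist{V}$ are the structures $a$ and $b$ themselves. The paper states this lemma without proof, as a standard fact; your argument is exactly the routine verification being taken for granted there.
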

In this way we have two $2$-functors
$$ (-)_{*} : \Vcat^{\mathtt{co}} \rightarrow \Dist{V}, \ \  (- )^{*}:  \Vcat \rightarrow \Dist{V}^{\op}.$$
By juggling with the definition of distributor, one can show that distributors between $V$-categories $(X,a), (Y,b)$ are in bijective correspondence with $V$-functors between $X^{\op} \boxtimes Y$ and $V$. It is easy to prove that this correspondence is functorial and that it gives an equivalence of ordered sets
$$\Dist{V}(X,Y) \simeq \Vcat(X^{\op} \boxtimes Y,V) \simeq \Vcat(Y,\mathbb{D}(X)),$$
where we associate to every $V$-distributor $j : X \xslashedrightarrow{} Y$ its \textit{mate}
$$ \ulcorner j \urcorner : Y \rightarrow \mathbb{D}(X), \ \ y \mapsto j(-,y).$$
\begin{prop}\label{DistAdj}
  The $2$-functor $(-)^* : \Vcat \rightarrow \Dist{V}^{\op}$ is left adjoint to the $2$-functor
  \[
  \begin{tikzcd}
  	\Dist{V}^{\op} \ar[r, "\mathbb{D}(-)"] & \Vcat, \mbox{ } Y \xslashedrightarrow{ } X \mapsto - \dist j : \mathbb{D}(X) \rightarrow \mathbb{D}(Y).
  \end{tikzcd}
  \]
\end{prop}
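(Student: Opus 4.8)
The plan is to realize the adjunction through a natural order-isomorphism of hom-posets, namely the mate correspondence already recorded above. Since $(-)^{*}$ is the identity on objects, for $V$-categories $X$ and $Y$ we have
\[
\Dist{V}^{\op}(X, Y) = \Dist{V}(Y,X),
\]
and the equivalence $\Dist{V}(Y,X) \simeq \Vcat(X, \mathbb{D}(Y))$ sending a distributor $j : Y \xslashedrightarrow{} X$ to its mate $\ulcorner j \urcorner : X \to \mathbb{D}(Y)$, $x \mapsto j(-,x)$, is precisely the candidate hom-isomorphism. As this is already an isomorphism of ordered sets, it suffices to check that it is natural in both variables, the order-isomorphism property being established; indeed an order-enriched adjunction is exactly an ordinary adjunction whose hom-bijection is an order-isomorphism.

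First I would check naturality in the $\Vcat$-variable. For a $V$-functor $f : X' \to X$ the functor $(-)^{*}$ produces $f^{*} : X \xslashedrightarrow{} X'$, and precomposition along it sends $j$ to $f^{*} \dist j$. A short computation using the distributor identity $a \dist j = j$ (with $a$ the structure of $X$) gives $(f^{*} \dist j)(y,x') = j(y, f(x'))$, whence $\ulcorner f^{*} \dist j \urcorner = \ulcorner j \urcorner \cdot f$, which is exactly the required commutativity. Naturality in the $\Dist{V}^{\op}$-variable is even more direct: for a distributor $k : Y' \xslashedrightarrow{} Y$ the functor $\mathbb{D}(-)$ acts by $\phi \mapsto \phi \dist k$, and associativity of distributor composition yields $\ulcorner j \dist k \urcorner = \mathbb{D}(k) \cdot \ulcorner j \urcorner$, since both send $x$ to $j(-,x) \dist k$.

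Having the natural isomorphism, I would finally identify the unit and counit for the record. Taking $j$ to be $a$, the identity of $X$ in $\Dist{V}$, shows that the unit $\eta_X$ is the mate $\ulcorner a \urcorner$, i.e.\ the Yoneda embedding $\Yo{X} : X \to \mathbb{D}(X)$; dually the counit $\epsilon_Y : Y \xslashedrightarrow{} \mathbb{D}(Y)$ is the evaluation distributor $\epsilon_Y(y, \phi) = \phi(y)$. One may instead verify the adjunction directly through these data: the triangle identities then reduce, via the Yoneda lemma $\mathbb{D}(X)[\Yo{X}(x), g] = g(x)$, to the density of representables in $\mathbb{D}(X)$.

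I expect the only real obstacle to be bookkeeping: because the statement lives simultaneously in $\Dist{V}^{\op}$ and in $\Vcat$, one must be scrupulous about the direction in which distributors compose and about which side of the mate the functorial actions fall on. The underlying computations are all immediate consequences of the associativity of $\dist$, the distributor axioms $a \dist j = j = j \dist a$, and the Yoneda lemma; none of them is delicate once the variances are pinned down correctly.
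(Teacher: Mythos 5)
Your proof is correct and takes exactly the route the paper intends: the paper states this proposition without an explicit proof, having just established the functorial mate correspondence $\Dist{V}(Y,X) \simeq \Vcat(X,\mathbb{D}(Y))$ as an equivalence of ordered sets, and your argument supplies precisely the missing content, namely the naturality of that order-isomorphism in both variables (your computations $(f^{*} \dist j)(y,x') = j(y,f(x'))$ via $a \dist j = j$, and $\ulcorner j \dist k \urcorner = \mathbb{D}(k)\cdot\ulcorner j \urcorner$ via associativity, are both correct). Your identification of the unit as the Yoneda embedding $\Yo{X}$ and of the counit as the evaluation distributor also matches the description of the induced $2$-monad that the paper records immediately after the proposition.
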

The $2$-monad induced by this $2$-adjunction has as underlying $2$-functor
$$\mathbb{D}(-): \Vcat \rightarrow \Vcat, \ \ f : X \rightarrow Y \mapsto \mathbb{D}(f) := - \dist f^{*} :\mathbb{D}(X) \rightarrow \mathbb{D}(Y)$$
and it has as unit, at a $V$-category $X$,
$$ \Yo{X} : X \rightarrow \mathbb{D}(X),$$
and as multiplication
$$- \dist (\Yo{X})_* : \mathbb{D}(X)^2 \rightarrow \mathbb{D}(X).$$
From the Yoneda lemma it follows that the monad $(\mathbb{D}(-),\Yo{-}, - \dist (\Yo{-})_*)$ is of \textit{Kock–Zöberlein} type (see \cite{KOCK199541}).\\
The $2$-category of pseudo-algebras for this monad is $2$-equivalent to the $2$-category formed by cocomplete $V$-categories and cocontinuous $V$-functors among them with the $2$-structure inherited by the one on $\Vcat$, and denoted by $\Coco{\Vcat}$. These last two observations, combined together, allow us to give a characterization of cocomplete $V$-categories.
\begin{teorema}\label{Coco}
	Let $(X,a)$ be a $V$-category. The following are equivalent:
  \begin{itemize}
    \item $(X,a)$ is a cocomplete $V$-category;
    \item There exists a $V$-functor
  	$$ \Sup_X : \mathbb{D}(X) \rightarrow X,$$
    such that, for every $x\in X$, $\Sup_X(\Yo{X}(x))\simeq x;$
    \item $(X,a)$ is \textit{pseudo-injective} with respect to fully faithful $V$-functors. That is to say, for every $V$-functor ${f: (Y,b) \rightarrow (X,a)}$ and for every fully faithful $V$-functor $i : (Y,b) \rightarrow (Z,c)$, there exists an extension ${f': (Z,c) \rightarrow (X,a)}$ such that $f' \cdot i \simeq f$.
  \end{itemize}
\end{teorema}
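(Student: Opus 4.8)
The plan is to treat the theorem as a repackaging of the theory of the Kock--Zöberlein monad $\mathbb{D}$ developed just above, splitting the cycle of implications into the two genuinely different pieces: the equivalence of the first two items, which is purely monad-theoretic, and the equivalence of the second and third items, which is a concrete injectivity argument. Throughout I would take for granted, as recorded immediately before the statement, that pseudo-algebras for $\mathbb{D}$ are exactly the cocomplete $V$-categories, so that the first item is synonymous with ``$(X,a)$ carries a pseudo-$\mathbb{D}$-algebra structure''.

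For the equivalence of the first two items I would invoke the \emph{lax-idempotency} of $\mathbb{D}$. Since $(\mathbb{D}, \Yo{-}, -\dist(\Yo{-})_*)$ is of Kock--Zöberlein type (see \cite{KOCK199541}), a $V$-category $X$ admits a pseudo-algebra structure if and only if the unit $\Yo{X} : X \to \mathbb{D}(X)$ admits a left adjoint in $\Vcat$, and in that case the left adjoint \emph{is} the algebra structure and is unique up to isomorphism. The key point is that the full coherence is not extra data: for a KZ monad any $V$-functor $s : \mathbb{D}(X) \to X$ with $s \cdot \Yo{X} \simeq \Id_X$ automatically satisfies $s \dashv \Yo{X}$ together with the remaining structure isomorphism. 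Hence the mere existence of $\Sup_X$ with $\Sup_X(\Yo{X}(x)) \simeq x$ in the second item is equivalent to cocompleteness; concretely $\Sup_X$ is the functor sending a presheaf to its colimit, and the adjunction $\Sup_X \dashv \Yo{X}$ encodes the universal property of weighted colimits.

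For the implication from the second item to the third I would give an explicit extension. Assume $\Sup_X \cdot \Yo{X} \simeq \Id_X$, and let $f : (Y,b) \to (X,a)$ be arbitrary with $i : (Y,b) \to (Z,c)$ fully faithful. Using the equivalence $\Dist{V}(Y,Z) \simeq \Vcat(Z,\mathbb{D}(Y))$, consider the mate $\ulcorner i_* \urcorner : Z \to \mathbb{D}(Y)$ of the distributor $i_* : Y \xslashedrightarrow{} Z$, and set $f' := \Sup_X \cdot \mathbb{D}(f) \cdot \ulcorner i_* \urcorner : Z \to X$. Precomposing with $i$ and using full faithfulness, one computes $\ulcorner i_* \urcorner \cdot i = \Yo{Y}$, since $i_*(-,i(y)) = c(i(-),i(y)) = b(-,y) = \Yo{Y}(y)$; naturality of the unit then gives $\mathbb{D}(f) \cdot \Yo{Y} \simeq \Yo{X} \cdot f$, and finally $\Sup_X \cdot \Yo{X} \simeq \Id_X$ yields $f' \cdot i \simeq f$, as required.

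For the converse, the third item to the second, I would apply pseudo-injectivity to a single canonical instance. The Yoneda lemma gives $\mathbb{D}(X)[\Yo{X}(x),\Yo{X}(y)] = a(x,y)$, so $\Yo{X} : X \to \mathbb{D}(X)$ is fully faithful; applying pseudo-injectivity of $X$ to the identity $\Id_X : X \to X$ along this fully faithful embedding produces an extension $\Sup_X : \mathbb{D}(X) \to X$ with $\Sup_X \cdot \Yo{X} \simeq \Id_X$, which is precisely the second item. I expect the main obstacle to be the first equivalence: one must be careful that the weak splitting condition in the second item really does upgrade to a genuine pseudo-algebra structure, and this is exactly where the Kock--Zöberlein property of $\mathbb{D}$ does the essential work; the two injectivity implications, by contrast, are formal once the mate calculus and full faithfulness of the Yoneda embedding are in hand.
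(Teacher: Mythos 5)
Your proposal is correct and takes essentially the same route as the paper, which derives the theorem from exactly the two facts you rely on: that $(\mathbb{D}(-),\Yo{-},-\dist(\Yo{-})_*)$ is of Kock--Z\"oberlein type and that its pseudo-algebras are precisely the cocomplete $V$-categories (the paper's subsequent remark even records your key point that $\Sup_X$, whenever it exists, is automatically left adjoint to $\Yo{X}$). The paper leaves the remaining details implicit, and your filling-in --- the retraction-implies-adjoint lemma for KZ monads, the extension $f' = \Sup_X \cdot \mathbb{D}(f) \cdot \ulcorner i_* \urcorner$ using $\ulcorner i_* \urcorner \cdot i = \Yo{Y}$, and the converse via full faithfulness of the Yoneda embedding --- is the standard argument behind the result the paper invokes.
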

\begin{re}
Notice that, since $(\mathbb{D}(-),\Yo{-}, - \dist (\Yo{-})_*)$ is of Kock-Zöberlein type, $\Sup_X$ (whenever it exists) is automatically the left adjoint to the Yoneda functor.
\end{re}
A $V$-category $(X,a)$ is called \textit{separated} (see \cite{HT10}) whenever $f \simeq g$ implies $f = g$, for all $V$-functors of the form $f,g : (Y,b) \rightarrow (X,a)$. Separated cocomplete $V$-categories are strict algebras for the presheaf monad $(\mathbb{D}(-),\Yo{-}, - \dist (\Yo{-})_*)$.
\begin{teorema}\label{CocoSep}
	Let $(X,a)$ be a $V$-category. The following are equivalent:
  \begin{itemize}
    \item $(X,a)$ is a separated cocomplete $V$-category;
    \item There exists a $V$-functor
  	$$ \Sup_X : \mathbb{D}(X) \rightarrow X,$$
    such that, for every $x\in X$, $\Sup_X(\Yo{X}(x))=  x;$
    \item $(X,a)$ is \textit{injective} with respect to fully faithful $V$-functors. That is to say, for every $V$-functor ${f: (Y,b) \rightarrow (X,a)}$ and for every fully faithful $V$-functor $i : (Y,b) \rightarrow (Z,c)$, there exists an extension ${f': (Z,c) \rightarrow (X,a)}$ such that $f' \cdot i  = f$.
  \end{itemize}
\end{teorema}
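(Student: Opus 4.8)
The plan is to establish the equivalences as two bidirectional implications, $(1)\Leftrightarrow(2)$ and $(2)\Leftrightarrow(3)$, using Theorem \ref{Coco} to supply the corresponding ``pseudo'' statements and then using separatedness (respectively the strict retraction $\Sup_X\cdot\Yo{X}=\Id_X$) to replace every occurrence of $\simeq$ by an honest equality. The only genuinely new work beyond Theorem \ref{Coco} is this systematic strictification, together with one explicit left Kan extension.

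For $(1)\Rightarrow(2)$ I would invoke Theorem \ref{Coco} to obtain a $V$-functor $\Sup_X:\mathbb{D}(X)\to X$ with $\Sup_X(\Yo{X}(x))\simeq x$ for all $x$; this says $\Sup_X\cdot\Yo{X}\simeq\Id_X$ as parallel $V$-functors $X\to X$, so separatedness of $X$ forces $\Sup_X\cdot\Yo{X}=\Id_X$, i.e.\ $\Sup_X(\Yo{X}(x))=x$. For the converse $(2)\Rightarrow(1)$, cocompleteness is immediate from Theorem \ref{Coco} since a strict equality is in particular an isomorphism. Separatedness I would extract directly: if $f\simeq g:(Y,b)\to(X,a)$ then $f(y)\simeq g(y)$ in $X$ for each $y$, and a short computation with $a\dist a\le a$ shows that isomorphic objects have equal representables, whence $\Yo{X}\cdot f=\Yo{X}\cdot g$; applying $\Sup_X$ and using $\Sup_X\cdot\Yo{X}=\Id_X$ yields $f=g$.

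The substantive step is $(2)\Rightarrow(3)$, where I would construct the extension explicitly as the strict left Kan extension
\[
f' \;:=\; \Sup_X\cdot\mathbb{D}(f)\cdot\ulcorner i_*\urcorner \;:\; (Z,c)\longrightarrow(X,a),
\]
a $V$-functor because it is a composite of $V$-functors (the mate $\ulcorner i_*\urcorner:Z\to\mathbb{D}(Y)$ being a $V$-functor by the distributor/presheaf correspondence). To check $f'\cdot i=f$ on the nose I would use full faithfulness of $i$: for $y_0\in Y$ one has $\ulcorner i_*\urcorner(i(y_0))=i_*(-,i(y_0))=c(i(-),i(y_0))=b(-,y_0)=\Yo{Y}(y_0)$; then naturality of the unit of the presheaf monad gives $\mathbb{D}(f)(\Yo{Y}(y_0))=\Yo{X}(f(y_0))$, and finally the strict hypothesis $(2)$ gives $\Sup_X(\Yo{X}(f(y_0)))=f(y_0)$. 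Composing, $f'(i(y_0))=f(y_0)$, exactly the strict extension condition.

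Finally $(3)\Rightarrow(2)$ is a one-line application of injectivity: since $\Yo{X}:X\to\mathbb{D}(X)$ is fully faithful (Yoneda lemma), extending $\Id_X:X\to X$ along $\Yo{X}$ produces a $V$-functor $\Sup_X:\mathbb{D}(X)\to X$ with $\Sup_X\cdot\Yo{X}=\Id_X$, which is precisely $(2)$. I expect the main obstacle to be the bookkeeping in $(2)\Rightarrow(3)$: verifying that $z\mapsto i_*(-,z)$ genuinely lands in $\mathbb{D}(Y)$ (this is where full faithfulness, via $b(y,y')=c(i(y),i(y'))$ and $c\dist c\le c$, is needed) and that each reduction above is an equality rather than merely an isomorphism, which is exactly what separatedness and the strict form of $(2)$ buy us.
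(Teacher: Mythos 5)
Your proof is correct. One point of comparison is essential here: the paper does not actually prove Theorem \ref{CocoSep} at all --- it states it as background, implicitly delegating it to the Kock--Z\"oberlein machinery (separated cocomplete $V$-categories are the \emph{strict} algebras for the KZ $2$-monad $(\mathbb{D}(-),\Yo{-},-\dist(\Yo{-})_*)$) and to the cited literature. What you have written is a self-contained, elementary version of exactly that intended argument: the equivalence $(1)\Leftrightarrow(2)$ by strictifying Theorem \ref{Coco} via separatedness (your observation that $x\simeq x'$ forces $\Yo{X}(x)=\Yo{X}(x')$, hence that a strict section of $\Yo{X}$ forces separatedness, is the key point and is verified correctly); the implication $(2)\Rightarrow(3)$ by the explicit left Kan extension $f'=\Sup_X\cdot\mathbb{D}(f)\cdot\ulcorner i_*\urcorner$, where full faithfulness of $i$ gives $\ulcorner i_*\urcorner\cdot i=\Yo{Y}$ on the nose; and $(3)\Rightarrow(2)$ by extending $\Id_X$ along the fully faithful $\Yo{X}$. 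This buys a proof readable without invoking pseudo-algebra/strict-algebra coherence theory, at the cost of redoing by hand what that theory gives for free. One small correction to your closing remarks: the fact that $z\mapsto i_*(-,z)$ lands in $\mathbb{D}(Y)$ does \emph{not} use full faithfulness --- $i_*$ is a distributor for any $V$-functor $i$ (this is the paper's Lemma 2.20); full faithfulness is needed only, and exactly, where your main text uses it, namely in the computation $i_*(-,i(y_0))=c(i(-),i(y_0))=b(-,y_0)=\Yo{Y}(y_0)$, which upgrades the lax comparison $f'\cdot i\geq f$ to the equality $f'\cdot i=f$.
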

Every set $X$ can be endowed with the discrete $V$-structure given by
$$
d_X(x_1,x_2) =
\begin{cases}
\perp &\mbox{ if } x_1 \neq x_2, \\
k &\mbox{ if } x_1 = x_2.
\end{cases}
$$
In this way we obtain a functor $d : \Sets \rightarrow \Vcats$, where the latter is the full subcategory of $\Vcat$ formed by separated $V$-categories. Since presheaf categories are always separated, we can compose it with
$$\mathbb{D}(-): \Vcats \rightarrow \Coco{\Vcats}.$$ In this way we get a functor which is left adjoint to the forgetful functor $\Coco{\Vcats} \rightarrow \Sets$.
\begin{teorema}\label{SetMon}
The forgetful functor $G:\Coco{\Vcats} \rightarrow \Sets$ is monadic.
\end{teorema}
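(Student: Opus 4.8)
The plan is to apply Beck's monadicity theorem to $G$. The left adjoint is already in hand: $F=\mathbb{D}(d(-))\dashv G$, and the induced monad $\mathbb{T}=GF$ on $\Sets$ sends a set $S$ to $\mathbb{T}S=G\mathbb{D}(d(S))$; since $d(S)$ is discrete, every function $d(S)^{\op}\to V$ is a $V$-functor, so $\mathbb{T}S=V^{S}$, with unit the assignment of representable presheaves and multiplication induced by $-\dist(\Yo{})_{*}$. It then suffices to show that $G$ reflects isomorphisms and that $G$ creates coequalizers of $G$-split pairs.

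First I would check that $G$ reflects isomorphisms. Let $f:(X,a)\to(Y,b)$ be a cocontinuous $V$-functor whose underlying map $Gf$ is a bijection. Since $X$ is cocomplete, $f$ has a right adjoint $V$-functor $g:Y\to X$, namely $g(y)=\Sup_{X}\bigl(x\mapsto b(f(x),y)\bigr)$, where the presheaf $x\mapsto b(f(x),y)$ lies in $\mathbb{D}(X)$ by the triangle inequality for $b$. The adjunction $f\dashv g$ gives $1_{X}\leq gf$ and $fg\leq 1_{Y}$ in the respective hom-orders. Feeding $x\leq gf(x)$ through the monotone $f$ and using $fgf(x)\leq f(x)$ yields $f(x)\simeq fgf(x)$; separatedness makes the underlying order antisymmetric, so $f(x)=fgf(x)$, and injectivity of $f$ gives $gf=1_{X}$. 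As $f$ is bijective this forces $g=f^{-1}$; hence $f$ is fully faithful, i.e. an isomorphism of $V$-categories. An isomorphism of $V$-categories transports the supremum operation, so $g$ is cocontinuous and $f$ is invertible in $\Coco{\Vcats}$.

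The substance of the argument is the creation of $G$-split coequalizers, and this is where I expect the main obstacle. Let $f,g:X\rightrightarrows Y$ be a pair in $\Coco{\Vcats}$ for which $(Gf,Gg)$ admits a split coequalizer $q:GY\to Q$ in $\Sets$, with $s:Q\to GY$ and $t:GY\to GX$ satisfying $qs=1_{Q}$, $Gf\cdot t=1_{GY}$ and $Gg\cdot t=sq$. Writing $\xi_{Y}:\mathbb{T}(GY)=V^{GY}\to GY$ for the structure map carried by the cocomplete $V$-category $Y$ (its $\Sup$-operation read off through the identification $\mathbb{T}=GF$), Beck's construction forces the structure map $\xi_{Q}:=q\cdot\xi_{Y}\cdot\mathbb{T}(s)$ on $Q$. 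The split identities, together with the fact that $f$ and $g$ are cocontinuous, make the verification that $(Q,\xi_{Q})$ is again of this form purely formal. The delicate part is to turn this transported operation back into a genuine object of $\Coco{\Vcats}$: one must reconstruct the $V$-relation on $Q$ from $\xi_{Q}$ through its copowers, check the two $V$-category axioms and separatedness, exhibit $\Sup_{Q}$ and the equation $\Sup_{Q}\cdot\Yo{Q}=1$ of Theorem~\ref{CocoSep}, and only then confirm that $q$ is the cocontinuous $V$-functor coequalizing $f$ and $g$, with uniqueness supplied by the reflection of isomorphisms above. Carrying this transport out simultaneously at the level of the relation, of separatedness, and of the supremum is the crux of the proof.

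Finally, Beck's theorem assembles these facts into the monadicity of $G$. As a consistency check---and as an alternative route that bypasses the coequalizer bookkeeping---one may instead invoke the equivalence $\Coco{\Vcats}\simeq\Mod$ recalled in this section: under it $G$ corresponds to the forgetful functor $\Mod\to\Sets$, since the equivalence leaves the underlying set untouched. As $V$-modules are the models of an (infinitary) algebraic theory, with joins of all arities and the unary actions $v\cdot(-)$ as operations and free algebra $V^{S}$ on a set $S$, this forgetful functor is monadic, recovering the same monad $\mathbb{T}S=V^{S}$ and hence the same conclusion.
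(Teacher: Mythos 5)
Your treatment of the reflection of isomorphisms is sound and is essentially the paper's own argument: the inverse is exhibited as $g=\Sup_X\cdot(-\dist f_{*})\cdot\Yo{Y}$, i.e.\ $g(y)=\Sup_X\bigl(b(f(-),y)\bigr)$, and cocontinuity of $g$ is automatic once $f$ is an isomorphism of $V$-categories. The genuine gap is in the other half of Beck's criterion. You write down the forced operation $\xi_Q=q\cdot\xi_Y\cdot\mathbb{T}(s)$ and then declare that one ``must reconstruct the $V$-relation on $Q$, check the axioms and separatedness, exhibit $\Sup_Q$, and confirm the universal property'' --- but that list \emph{is} the theorem, and your text never discharges it. The point is that $\Coco{\Vcats}$ is not presented as an Eilenberg--Moore category; the identification of $\mathbb{T}$-algebras with cocomplete separated $V$-categories is the \emph{conclusion} of monadicity, so you cannot pass from the algebra structure $\xi_Q$ on the set $Q$ to an object of $\Coco{\Vcats}$ without already having what you are trying to prove. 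What is needed, and what the paper supplies, is a direct construction of the coequalizer inside $\Coco{\Vcats}$: working with a $G$-equivalence relation $R\rightrightarrows X$ (Duskin's criterion rather than split pairs), the paper defines the quotient $V$-category $(Q=X/R,\,a_R)$ by $a_R(\overline{x},\overline{y})=a(x,y)$, notes that $\mathbb{D}(-)$, being a left adjoint, sends this to a coequalizer $\mathbb{D}(R)\rightrightarrows\mathbb{D}(X)\rightarrow\mathbb{D}(Q)$ in $\Coco{\Vcats}$ which is split by $-\dist\pi_{*}$ and $-\dist\pi_{1*}$ (hence remains a coequalizer in $\Vcats$), and then induces $\Sup_Q:\mathbb{D}(Q)\rightarrow Q$ from the universal property before verifying that $(Q,a_R)$ is the coequalizer in $\Coco{\Vcats}$. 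Some construction of this kind (or its split-pair analogue) is unavoidable, and your proposal defers exactly this step.

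Your fallback route through $\Coco{\Vcats}\simeq\Mod$ does not close the gap either. It is arguably not circular --- Theorem \ref{Rest} rests on Theorem \ref{Equi} and the lemmas on conical suprema, not on Theorem \ref{SetMon} --- but it inverts the paper's order of development: the equivalence with $\Mod$ is only established in Subsection \ref{ModAlg}, after the present theorem, and the monadicity of $\Mod\rightarrow\Sets$ is precisely what the paper \emph{deduces} from this theorem in Subsection \ref{MonadSup}. So at this point of the paper you would be resting the claim on the folklore statement that an infinitary variety admitting free algebras is monadic over $\Sets$. That statement is true, but its standard proof is the very transport-of-structure-along-an-absolute-coequalizer argument you set aside; invoking it relocates the work rather than performing it. To make the proposal complete, either carry out the creation of split coequalizers in $\Coco{\Vcats}$ along the lines above, or prove the variety statement for $\Mod$ together with the compatibility of the equivalence with the two forgetful functors.
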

\begin{proof}(Sketch)
	The proof relies on Duskin's criterion: we have to show that $G$ reflects isomorphisms and that $\Coco{\Vcats}$ has, and $G$ preserves, coequalizers of $G$-equivalence relations (see \cite{Duskin}).\par \medskip

	Let $f : (X,a) \rightarrow (Y,b)$ be in $\Coco{\Vcats}$ such that $f$ is a bijection with $g$ be its inverse (of course in $\Sets$). Let $- \dist f_{*} :\mathbb{D}(Y) \rightarrow \mathbb{D}(X) $ be the right adjoint to $\mathbb{D}(f)$. One can easily show that $g = \Sup_X \cdot (- \dist f_{*}) \cdot \Yo{Y}$ and thus that it is a $V$-functor.\par \medskip

	Let $R \rightrightarrows X$ be a $G$-equivalence relation. One can easily show that $(Q = X/R, a_R)$, where ${a_R(\overline{x},\overline{y}) = a(x,y)}$, is the coequalizer of $R \rightrightarrows X$ in $\Vcats$. Since $$\mathbb{D}(-): \Vcats \rightarrow \Coco{\Vcats}$$ preserves colimits (since it is a left adjoint), it follows that
	\[
	\begin{tikzcd}[row sep=large, column sep=large]
	  \mathbb{D}(R) \arrow[r, ,"\mathbb{D}(\pi_1)", shift left]
	        \arrow[r,"\mathbb{D}(\pi_2)", shift right, swap] & \mathbb{D}(X) \ar[r, "\mathbb{D}(\pi)"] &\mathbb{D}(Q)
	\end{tikzcd}
	\]
	is a coequalizer in $ \Coco{\Vcats}$ too. Moreover, one can show that there is a splitting given by $ {- \dist \pi_{*} : \mathbb{D}(Q)\rightarrow \mathbb{D}(X)}$, and by $- \dist \pi_{1*} : \mathbb{D}(X) \rightarrow \mathbb{D}(R).$ Moreover, since it splits, it remains a coequalizer also in $\Vcats$. We have the following commutative diagram
	\[
	\begin{tikzcd}[row sep=large, column sep=huge]
	 \mathbb{D}(R) \arrow[r, ,"\mathbb{D}(\pi_1)", shift left]
				 \arrow[r,"\mathbb{D}(\pi_2)", shift right, swap] \ar[d, "\Sup_R", swap] & \mathbb{D}(X) \ar[r, "\mathbb{D}(\pi)"]\arrow[l,  "- \dist \pi_{1*}", bend right, swap] \ar[d,, "\Sup_X", swap] & \mathbb{D}(Q) \arrow[l,  "- \dist \pi_{*}", bend right, swap] \arrow[d, "\exists !", dashed]\\
				 R \arrow[r, ,"\pi_1", shift left]
							 \arrow[r,"\pi_2", shift right, swap] & X \ar[r, "\pi", swap] & Q.
 \end{tikzcd}
 \]
Here the dashed arrow, which comes from the universal property of coequalizers, defines an algebra structure on $(Q,a_R)$ and it proves that it is cocomplete. In order to conclude, one has to prove that $(Q,a_R)$ is the coequalizer of $R \rightrightarrows X$ in $\Coco{\Vcats}$. Suppose $h : (X,a) \rightarrow (Y,b)$ is in $\Coco{\Vcats}$ and it is such that $h \cdot \pi_1 = h \cdot \pi_2$. By the universal property of coequalizers we get a unique arrow $f : \mathbb{D}(Q) \rightarrow (Y,b)$. By taking $f \cdot \Yo{Q} : (Q, a_R) \rightarrow (Y,b)$, one can prove the universal property also for
$(Q, a_R)$. \\
\end{proof}
Before we proved the previous theorem, we stated that the left adjoint to the forgetful functor
$$G:\Coco{\Vcats} \rightarrow \Sets$$
is the composite of $d : \Sets \rightarrow \Vcats$ with the presheaf functor $\mathbb{D}(-) : \Vcats \rightarrow \Coco{\Vcats}$. If we write it down, we can easily discover that this functor sends a set $X$ to the free cocomplete $V$-category whose underlying set is $\Pow{V}$, where $\Pow{V}(X) = V^X$. If we study the monad which arises from the adjunction $\Pow{V} \dashv G$, we discover that the resulting monad is the $V$-powerset monad $(\Pow{V}, u, n)$, the enriched generalization of the classical powerset monad, where
$\Pow{V} : \Sets \rightarrow \Sets$ is defined by putting $\Pow{V}(X) = V^X$ and, for $f: X \rightarrow Y$ and $\phi \in V^X$
$$ \Pow{V}(f)(\phi)(y) = \bigvee_{x \in f^{-1}(y) }\phi(x);$$
and:
\begin{itemize}
	\item $u_X : X \rightarrow V^X$ is the transpose of the diagonal $\bigtriangleup_X : X \times X \rightarrow V$;
	\item $n_X : \Pow{V}(\Pow{V}(X)) \rightarrow \Pow{V}(X)$ is defined by $n_X(\Phi)(x) = \bigvee_{\phi \in V^X} \Phi(\phi) \otimes \phi(x)$.
\end{itemize}
In this way we have the equivalence
$$\Coco{\Vcats} \simeq \Alg{V},$$
which can be explicitly described as the one that sends the cocomplete $V$-category $(X,a)$ to the algebra $(X, \alpha)$, where $\alpha(\psi)= \mathtt{Sup_X}(\psi \dist a)$.
\begin{re}
	Notice that the equivalence $\Coco{\Vcats} \simeq \Alg{V}$ generalizes the well known equivalence
	$$\mathtt{Sup} \simeq \Alg{\mathbf{2}},$$
	where $\Pow{\mathbf{2}}$ is the \textit{vanilla} powerset monad and $\mathtt{Sup}$ is the category of suplattices with suprema preserving maps.
\end{re}
\subsection{Enrichment via Actions}\label{ModAlg}
To every $V$-category $(X,a)$ we can associate an ordered set $(X, \leq_a)$, where the order is defined as
$$ x \leq_a y \iff k \leq a(x,y).$$
We call $(X, \leq_a)$ the underlying ordered set of the $V$-category $(X,a)$. This defines a $2$-functor
$$ \Vcat \rightarrow \mathtt{Ord}, \ \ f : (X,a)\rightarrow (Y,b) \mapsto f : (X, \leq_a) \rightarrow (Y,\leq_b).$$
\begin{re}
	Notice that the underlying orderd set of the $V$-category $(V,[-,=])$ is $(V,\leq)$, the underlying partially ordered set of the quantale $V$.
\end{re}
\begin{re}
	Notice that $ \Vcat \rightarrow \mathtt{Ord}$ restricts to a $2$-functor
	$$  \Vcats \rightarrow \mathtt{Ord}_{\mathtt{sep}}.$$
	Moreover, it is easy to see that $\mathtt{Ord}_{\mathtt{sep}} \simeq \mathtt{Pos}$, where the latter is the $2$-category of partially ordered sets and monotone maps.
\end{re}
\begin{re}
	The arguments we are going to use in this paper rely---mainly---on the monadicity over $\Sets$ of certain categories. For this reason we restrict ourself to consider only separated categories.
\end{re}
\begin{defin}
	We say that a $V$-category $(X,a)$ is copowered if, for all $x\in X$, $a(x, =) : X \rightarrow V$ admits a left adjoint in $\Vcat$ denoted by $- \odot x : V \rightarrow X$. That is to say
$$ a (u \odot x, y) = [u, a(x,y)],$$
for all $x,y \in X$ and $u \in V.$ We say that a $V$-functor $f: (X,a) \rightarrow (Y,b)$ between copowered $V$-categories preserves copowers if, for all $x\in X$ and $u \in V$, $f(u \odot x) \simeq u \odot f(x)$.
\end{defin}
In this way we can form the $2$-category of copowered categories with copowers preserving $V$-functors among them, denoted as $\Vcat^{\odot}$. In the same way, if we consider only separeted $V$-categories, we obtain the category $\Vcats^{\odot}$.\par\medskip
If we start with a separated copowered category $(X,a)$ and we take its underlying ordered set, then ${- \odot x : V \rightarrow X}$ becomes a monotone map of the type
$$- \odot x : (V, \leq) \rightarrow (X,\leq_a).$$
Moreover, we have the following lemma.
\begin{lem}\label{2f}
Under the same hypothesis as above, the monotone map
$$- \odot x : (V, \leq) \rightarrow (X,\leq_a),$$
enjoys the following properties, for all $x\in X $, $u,v \in V$:
\begin{itemize}
  \item $k \odot x = x;$
  \item $v \odot (u \odot x)  = (v \otimes u) \odot x;$
  \item $  ( \bigvee_i u_i ) \odot x = \bigvee_i (u_i \odot x)$, for every set $\{u_i \mbox{ | } i \in I\}$ of elements of $V$.
\end{itemize}
\end{lem}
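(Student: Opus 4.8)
The plan is to exploit separatedness to upgrade the three claimed isomorphisms to genuine equalities, computing everything through the defining adjunction $a(u \odot x, y) = [u, a(x,y)]$ together with a handful of standard internal-hom identities. First I would record the reduction principle used throughout. Since $(X,a)$ is separated, its underlying order $\leq_a$ is antisymmetric: a point $p \in X$ is the same as a $V$-functor $K = (1,k) \to (X,a)$, and for two such points $p \simeq q$ (that is, $k \leq a(p,q)$ and $k \leq a(q,p)$) forces $p = q$. From this I would deduce the handy observation that if $a(p,y) = a(q,y)$ for every $y \in X$, then $p = q$: instantiating $y = q$ and $y = p$ and using $\Id \leq a$ (so $a(p,p), a(q,q) \geq k$) yields $p \simeq q$. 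Thus, to prove the first two equalities it suffices to check that the two sides have the same representable $a(-,y)$.

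Next I would recall the internal-hom identities I need in the commutative quantale $V$: the equivalence $k \leq [u,w] \iff u \leq w$, the unit law $[k,w] = w$, the currying identity $[v \otimes u, w] = [v,[u,w]]$ (both immediate from $u \otimes - \dashv [u,-]$ and commutativity of $\otimes$), and $[\bigvee_i u_i, w] = \bigwedge_i [u_i, w]$. For the first bullet I compute $a(k \odot x, y) = [k, a(x,y)] = a(x,y)$ for every $y$, so $k \odot x$ and $x$ share the same representable and therefore agree. For the second bullet I compute, for every $y$,
$$a(v \odot (u \odot x), y) = [v, a(u \odot x, y)] = [v, [u, a(x,y)]] = [v \otimes u, a(x,y)] = a((v \otimes u) \odot x, y),$$
the third equality being the currying identity; again equal representables give the desired equality.

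For the third bullet I would show directly that $(\bigvee_i u_i) \odot x$ is the least upper bound of $\{u_i \odot x \mid i \in I\}$ in $(X, \leq_a)$. Monotonicity of $- \odot x$ already makes it an upper bound. If $z$ is any upper bound, then $u_i \odot x \leq_a z$ means $k \leq a(u_i \odot x, z) = [u_i, a(x,z)]$, i.e. $u_i \leq a(x,z)$, for every $i$; taking the supremum gives $\bigvee_i u_i \leq a(x,z)$, whence $k \leq [\bigvee_i u_i, a(x,z)] = a((\bigvee_i u_i)\odot x, z)$, that is $(\bigvee_i u_i)\odot x \leq_a z$. (Equivalently, this is just the fact that the left adjoint $- \odot x$ preserves suprema.) Antisymmetry of $\leq_a$ then identifies this least upper bound with $(\bigvee_i u_i) \odot x$.

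The only genuinely delicate point is this last bullet, where the equality simultaneously asserts that the supremum $\bigvee_i (u_i \odot x)$ exists in $(X, \leq_a)$ and that it is computed by $- \odot x$; here I must verify the universal property in the underlying order and rely on separatedness (antisymmetry) to upgrade ``least upper bound'' to a bona fide equality rather than a mere isomorphism. The first two bullets are routine once the internal-hom identities and the ``equal representables'' reduction are in place.
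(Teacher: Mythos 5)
Your proof is correct and takes essentially the same approach as the paper: for the first two bullets the paper performs the identical representable computation via $a(u \odot x, y) = [u, a(x,y)]$ (leaving implicit the separatedness reduction that you spell out), and for the third bullet the paper simply invokes the adjunction $- \odot x \dashv a(x,=)$, of which your direct least-upper-bound verification is the unfolded version.
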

\begin{proof}
  First observe that $k \odot x = x$ follows from $[k,w] = w$.\\
  Fix an $x\in X$. Then, for all $y\in X$, we have
  \begin{alignat*}{2}
    a( v \odot (u \odot x), y ) & = [v, a(u \odot x, y)] \\
    & = [v, [u, a(x,y)]] \\
    & = [v \otimes u, a(x,y)]\\
    & = a( (v \otimes u)\odot x, y ),
  \end{alignat*}
  from which $v \odot (u \odot x)  = (v \otimes u) \odot x$ follows.\\
  Finally, the last property follows from the adjunction $- \odot x \dashv a(x, =)$.\\
\end{proof}
\begin{defin}
Let $\mathtt{Pos}^V_{ \vee }$ be the category described as follows. An object of $\mathtt{Pos}^V_{ \vee }$ is a poset $(X,\leq_X)$ equipped with a monotone map
$$ \rho : V \boxtimes X \rightarrow X,$$
such that, for all $x\in X $, $u,v \in V$:
\begin{itemize}
  \item $\rho(k,x) = x;$
  \item $\rho(v, \rho(u,x)) = \rho( v \otimes u, x);$
  \item $\rho( \bigvee_i u_i,x ) = \bigvee_i \rho(u_i, x),$ for every set $\{u_i \mbox{ | } i \in I\}$ of elements of $V$.
\end{itemize}
An arrow $f: (X,\leq_X, \rho) \rightarrow (Y, \leq_Y, \theta)$ in $\mathtt{Pos}^V_{ \vee }$ is a monotone map between the underlying ordered sets $(X,\leq_X)$ and $(Y, \leq_Y)$, such that the following diagram commutes
\[
\begin{tikzcd}
V \boxtimes X \ar[d, "\rho", swap] \ar[r, "\Id \boxtimes f"] & V \boxtimes Y  \ar[d, "\theta"] \\
X \ar[r, "f"] & Y.
\end{tikzcd}
\]
\end{defin}
\begin{re}
	Let $X, Y$ be ordered sets. Then $X \Tensor{} Y \simeq X \times Y$, that is to say the monoidal structure $\Tensor{}$ in the category $\mathtt{Ord}$ coincides with the cartesian product $\times$.
\end{re}
\begin{re}
  Notice that $(V,\leq)$ acts on itself via the multiplication $ \otimes : V \times V \rightarrow V$. Moreover, since $\otimes$ preserves suprema, we also have
  $(\bigvee_i u_i) \otimes v = \bigvee_i (u_i \otimes v).$
\end{re}
\begin{prop}
  There exists a $2$-functor $\Vcats^{\odot} \rightarrow \mathtt{Pos}^V_{ \vee }$ that associates to a copowered $V$-category $(X,a)$ its underlying ordered sets $(X,\leq_a)$ with the action given by $ - \odot = : V \boxtimes X \rightarrow V$.
\end{prop}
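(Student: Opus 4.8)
The plan is to define the assignment on objects, morphisms and $2$-cells, and then to verify in turn that it lands in $\mathtt{Pos}^V_{\vee}$, that it is functorial, and that it preserves the order-enrichment. On objects I send a separated copowered $V$-category $(X,a)$ to the poset $(X,\leq_a)$ equipped with $\rho(u,x)=u\odot x$. The three algebraic laws required of $\rho$—unitality, associativity, and preservation of suprema in the first variable—are exactly the content of Lemma~\ref{2f}, so nothing new is needed there. The only genuine verification at the level of objects is that $\rho:V\boxtimes X\to X$ is monotone. Since for posets $V\boxtimes X\simeq V\times X$ carries the product order, it suffices to check monotonicity in each variable separately: in the first variable $-\odot x$ is monotone because it is a left adjoint, and in the second variable monotonicity of $u\odot(-)$ will follow from the inequality $v\otimes a(x,y)\leq a(x,v\odot y)$.

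I would establish this last inequality as the technical core of the object part. From reflexivity $\Id\leq a$ together with the defining equation $a(v\odot y,v\odot y)=[v,a(y,v\odot y)]$ one reads off the unit inequality $v\leq a(y,v\odot y)$; composing with $a\dist a\leq a$ then yields $a(x,v\odot y)\geq a(x,y)\otimes a(y,v\odot y)\geq v\otimes a(x,y)$. In fact this same inequality shows that $-\odot=:V\boxtimes X\to X$ is itself a $V$-functor, from which monotonicity on the underlying orders is immediate; I may phrase the argument this way, since the copower $V$-functor is the conceptually correct object here and the underlying-order construction $\Vcat\to\mathtt{Ord}$ is already available.

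For morphisms, a copowers-preserving $V$-functor $f:(X,a)\to(Y,b)$ has an underlying monotone map $f:(X,\leq_a)\to(Y,\leq_b)$, since the underlying-order construction is $2$-functorial. Hence I only need the square defining a morphism of $\mathtt{Pos}^V_{\vee}$ to commute, i.e.\ $f(u\odot x)=u\odot f(x)$. By definition $f$ preserves copowers only up to isomorphism, $f(u\odot x)\simeq u\odot f(x)$; here is where separatedness of the target is essential, because in a separated $V$-category $\simeq$ forces equality, upgrading commutation up to iso to strict commutation. This is precisely why the construction is stated for $\Vcats^{\odot}$ rather than for $\Vcat^{\odot}$.

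Functoriality (identities to identities, composites to composites) is then immediate, as the underlying map of a composite of $V$-functors is the composite of the underlying maps. For the $2$-dimensional part, the order on hom-sets in $\Vcat$ is $f\leq g\iff k\leq\bigwedge_{x}b(f(x),g(x))$, which unwinds to $f(x)\leq_b g(x)$ for all $x$, exactly the pointwise order of monotone maps in $\mathtt{Pos}^V_{\vee}$; thus the assignment is (locally fully faithfully) compatible with $2$-cells, and is therefore a $2$-functor. I expect the main obstacle to be the object-level inequality $v\otimes a(x,y)\leq a(x,v\odot y)$—equivalently, the $V$-functoriality of the copower—together with the careful bookkeeping of exactly where separatedness is invoked to pass from $\simeq$ to $=$.
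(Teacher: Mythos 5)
Your proof is correct, and it follows the route the paper intends; in fact the paper states this proposition with no proof at all, treating it as an immediate consequence of Lemma~\ref{2f} and the surrounding remarks, so your writeup supplies precisely the details that are left implicit there. Two of these are genuine verifications and you identify both. First, Lemma~\ref{2f} only records monotonicity (indeed sup-preservation) of $-\odot x$ in the quantale variable; for $-\odot=$ to be an arrow $V\boxtimes X\to X$ of posets one also needs monotonicity in the $X$-variable, and your inequality $v\otimes a(x,y)\leq a(x,v\odot y)$ --- obtained from $k\leq a(v\odot y,v\odot y)=[v,a(y,v\odot y)]$, transitivity $a\dist a\leq a$, and commutativity of $V$ --- is exactly the missing ingredient (and, as you note, it even shows the copower is a $V$-functor $V\boxtimes X\to X$, so one may alternatively apply the underlying-order $2$-functor, the product order being contained in the underlying order of the $V$-categorical tensor). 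Second, you correctly isolate where separatedness enters: the morphism square in $\mathtt{Pos}^V_{\vee}$ must commute strictly, while a copower-preserving $V$-functor only satisfies $f(u\odot x)\simeq u\odot f(x)$, and separatedness of the codomain upgrades this to equality --- this is indeed why the construction is stated for $\Vcats^{\odot}$ rather than $\Vcat^{\odot}$. The $2$-dimensional part is also handled correctly, since the hom-order of $\Vcat$ becomes exactly the pointwise order of monotone maps. No gaps.
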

\begin{re}\label{AFT}
  Notice that, from the adjoint functor theorem, it follows that
  $$\rho( \bigvee_i u_i , x) = \bigvee_i \rho(u_i, x)$$
  is equivalent to the statement: $\rho_x =  \rho(-, x) : V \rightarrow X$ has a right adjoint for all $x \in X$. In particular, when we apply this to $(V,\leq)$, we get as a right adjoint the internal hom $[x,=]$.
  This crucial observation will allow us to define a $V$-structure starting from the action.
\end{re}
Let $(X,\leq_X, \rho)$ be an object of $\mathtt{Pos}^V_{ \vee }$. By Remark \ref{AFT}, for all $x \in X$, there exists a monotone map $a(x,=): X \rightarrow V$ which is right adjoint to $\rho_x : V \rightarrow X$. Thus we have, for all $x,y \in X$ and $v \in V$,
$$\rho(v,x) \leq y \mbox{ $\iff$ } v \leq a(x,y).$$
In this way we can define a $V$-relation $a : X \xslashedrightarrow{} X$. As one might expect, this relation defines a $V$-structure on $X$.
\begin{lem}
  The $V$-relation $a : X \xslashedrightarrow{} X$ defines a $V$-structure on the set $X$. Moreover, if $$f: (X,\leq_X, \rho) \rightarrow (Y, \leq_Y, \theta)$$ is an arrow in $\mathtt{Pos}^V_{ \vee }$, then $f: (X,a) \rightarrow (Y,b)$ becomes a $V$-functor (where $a$ and $b$ are the $V$-structures induced by
$\rho$ and $\theta$).
\end{lem}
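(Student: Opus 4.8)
The plan is to work throughout with the defining adjunction $\rho(v,x) \leq y \iff v \leq a(x,y)$. Instantiating its right-to-left implication at $v = a(x,y)$ (using $a(x,y) \leq a(x,y)$) yields the counit inequality $\rho(a(x,y),x) \leq y$, valid for all $x,y \in X$, which is the single fact that drives the whole proof. I would also record at the outset that, since the tensor of posets is the cartesian product, the monotone map $\rho : V \boxtimes X \to X$ is monotone separately in each variable.

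To verify $\Id \leq a$ I would check $k \leq a(x,x)$ for every $x$; by the adjunction this is equivalent to $\rho(k,x) \leq x$, and the first action axiom makes $\rho(k,x) = x$, so the inequality holds. For $a \dist a \leq a$ it suffices, since the composite is a supremum over $y$, to check $a(x,y) \otimes a(y,z) \leq a(x,z)$ for all $x,y,z$; by the adjunction this is in turn equivalent to $\rho(a(x,y) \otimes a(y,z), x) \leq z$. Using the commutativity of $\otimes$ and the associativity action axiom I would rewrite the left-hand side as $\rho(a(y,z), \rho(a(x,y),x))$; the counit inequality $\rho(a(x,y),x) \leq y$ together with monotonicity of $\rho$ in its second argument then gives $\rho(a(y,z), \rho(a(x,y),x)) \leq \rho(a(y,z),y) \leq z$, the last step being the counit once more.

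For the functoriality statement I would begin from the square defining an arrow of $\mathtt{Pos}^V_{\vee}$, that is $f(\rho(v,x)) = \theta(v,f(x))$. To show that $f$ is a $V$-functor it suffices, by the adjunction associated with $(Y,b)$, to prove $\theta(a(x,y),f(x)) \leq f(y)$. The commuting square rewrites the left-hand side as $f(\rho(a(x,y),x))$, and then the counit inequality $\rho(a(x,y),x) \leq y$ combined with monotonicity of $f$ gives $f(\rho(a(x,y),x)) \leq f(y)$, as required.

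Since every step is merely an application of the same adjunction, I do not expect a genuine obstacle; the only point demanding care is bookkeeping the commutativity of $\otimes$, so that the order of the two factors appearing in the matrix composition $a \dist a$ matches the order forced by the associativity axiom $\rho(v,\rho(u,x)) = \rho(v \otimes u, x)$.
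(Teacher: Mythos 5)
Your proof is correct and follows essentially the same route as the paper: both parts are driven by the adjunction $\rho_x \dashv a(x,=)$, with the unit axiom giving reflexivity, the associativity axiom plus the adjunction giving transitivity, and the equivariance square plus the adjunction giving functoriality. The only difference is presentational: where the paper appeals to the general fact that passing to right adjoints turns the commuting triangle $f \cdot \rho_x = \theta_{f(x)}$ into a laxly commuting one, you verify the same inequality pointwise via the counit $\rho(a(x,y),x) \leq y$, which is precisely what that general argument unpacks to.
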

\begin{proof}
We have to show that, for all $x,y, z \in X$:
\begin{itemize}
  \item $ k\leq a(x,x);$
  \item $ a(x,y) \otimes a(y,z ) \leq a(x,z).$
\end{itemize}
The first one follows directly from $\rho(k,x) = x$, while the second one follows from $\rho(v,\rho(u,x)) = \rho( v \otimes u,x)$ and the from the adjunction $\rho_x \dashv a(x,=).$\par \medskip

Let $f: (X,\leq_X, \rho) \rightarrow (Y, \leq_Y, \theta)$ be an arrow in $\mathtt{Pos}^V_{ \vee }$ and call $b$ the $V$-structure induced by
$\theta$. Then, if we fix $x \in X$, the diagram
\[
\begin{tikzcd}
V \boxtimes X \ar[d, "\rho", swap] \ar[r, "\Id \boxtimes f"] & V \boxtimes X  \ar[d, "\theta"] \\
X \ar[r, "f"] & Y
\end{tikzcd}
\]
becomes
\[
\begin{tikzcd}
    & V \ar[dl, "\rho_x", swap] \ar[dr, "\theta_{f(x)}"] &   \\
     X \ar[rr, "f"] &  & Y.
\end{tikzcd}
\]
By general theory, if we take the right adjoints, the corresponding diagram does not commute anymore, but one has
\[
\begin{tikzcd}
    & V &   \\
     X \ar[ur, "\mathtt{a(x,=)}", ""{name=A, below}] \ar[rr, "f", swap] &  & Y \ar[ul, "\mathtt{b(f(x),=)}", ""{name=B,above}, swap]\ar[from=A, to=B, symbol= \leq]
\end{tikzcd}
\]
which, in pointwise terms, means that
$$ a(x,y) \leq b(f(x),f(y)).$$
Since we can vary both $x$ and $y$, this proves that $f$ is a $V$-functor.\\
\end{proof}
\begin{lem}
Let $(X,a)$ be the $V$-category obtained from an object $(X,\leq_X,\rho)$ of $\mathtt{Pos}^V_{ \vee }$. Then $(X,a)$ is a copowered category, with copowers given by $\rho_x$ for all $x \in X$.
\end{lem}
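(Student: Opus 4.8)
The plan is to verify directly the defining equation of a copower, namely that $a(\rho(u,x),y)=[u,a(x,y)]$ for all $x,y\in X$ and $u\in V$; this equation says exactly that $\rho_x=\rho(-,x)$ is left adjoint in $\Vcat$ to the corepresentable $V$-functor $a(x,=)$, so establishing it proves simultaneously that the copowers exist and that they are computed by $\rho$. Since $V$ is a complete lattice, and hence a poset, it suffices to show that the same elements $w\in V$ lie below each of the two sides, i.e.\ that $w\leq a(\rho(u,x),y)$ if and only if $w\leq[u,a(x,y)]$ for every $w\in V$, and then conclude by antisymmetry.

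For the left-hand inequality I would unwind $w\leq a(\rho(u,x),y)$ using the adjunction $\rho(v,x)\leq y \iff v\leq a(x,y)$ that defines $a$, applied at the base point $\rho(u,x)$ in place of $x$: this rewrites it as $\rho(w,\rho(u,x))\leq y$, and the action law $\rho(w,\rho(u,x))=\rho(w\otimes u,x)$ turns it into $\rho(w\otimes u,x)\leq y$. For the right-hand inequality I would instead use the internal-hom adjunction $u\otimes{-}\dashv[u,{-}]$ to rewrite $w\leq[u,a(x,y)]$ as $u\otimes w\leq a(x,y)$, and then the same defining adjunction (now at the base point $x$) to rewrite this as $\rho(u\otimes w,x)\leq y$. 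The two chains meet precisely because $V$ is commutative, so $w\otimes u=u\otimes w$ and the conditions $\rho(w\otimes u,x)\leq y$ and $\rho(u\otimes w,x)\leq y$ coincide; this yields the claimed equivalence and hence the desired equality.

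Finally I would record that the formula just proved forces $\rho_x$ to be a $V$-functor, so that the adjunction genuinely lives in $\Vcat$: setting $y=\rho(v,x)$ gives $a(\rho(u,x),\rho(v,x))=[u,a(x,\rho(v,x))]$, and since $v\leq a(x,\rho(v,x))$ (again by the defining adjunction, applied to $\rho(v,x)\leq\rho(v,x)$) and $[u,{-}]$ is monotone, we obtain $[u,v]\leq a(\rho(u,x),\rho(v,x))$, which is exactly the $V$-functoriality condition for $\rho_x$. I do not expect a genuine obstacle here: the argument is a short adjunction chase, and the only points demanding care are applying the defining adjunction at the shifted base point $\rho(u,x)$ and noticing that commutativity of the quantale is precisely what makes the two sides of the desired identity agree.
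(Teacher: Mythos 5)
Your proof is correct, but it takes a genuinely different route from the paper's. The paper disposes of this lemma in one line of abstract nonsense: it invokes the preceding lemma (which makes the passage $(X,\leq_X,\rho)\mapsto(X,a)$ $2$-functorial, and in particular turns the equivariant monotone map $\rho_x:(V,\leq,\otimes)\rightarrow(X,\leq_X,\rho)$ into a $V$-functor), the fact that $2$-functors preserve adjunctions, and the observation of Remark \ref{AFT} that the $V$-structure induced on $(V,\leq,\otimes)$ by its own action is precisely the internal hom $[-,=]$; the order-theoretic adjunction $\rho_x\dashv a(x,=)$ then transports to an enriched adjunction, which is the copower property. You instead verify the defining equation $a(\rho(u,x),y)=[u,a(x,y)]$ by hand, testing both sides against an arbitrary $w\in V$ and concluding by antisymmetry --- a Galois-connection chase that never leaves the pointwise level. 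What the paper's argument buys is brevity and reuse of machinery already set up; what yours buys is that it is self-contained and makes visible exactly which hypotheses enter where: the associativity axiom $\rho(w,\rho(u,x))=\rho(w\otimes u,x)$, the commutativity of $V$ (needed to make the two chains meet), and completeness/antisymmetry of the lattice order. Your closing paragraph, deriving $[u,v]\leq a(\rho(u,x),\rho(v,x))$ from the equation, also makes explicit a point the paper's one-liner leaves implicit, namely that $\rho_x$ genuinely is a $V$-functor, so the adjunction lives in $\Vcat$ and not merely in $\mathtt{Ord}$. The only remark worth adding is that, for the phrase ``left adjoint in $\Vcat$'' to be fully justified, one should also note that $a(x,=):(X,a)\rightarrow(V,[-,=])$ is a $V$-functor; this is automatic for any $V$-category (it follows from $a\dist a\leq a$), and since the paper's definition of copowered identifies the adjunction with the equation you proved, nothing essential is missing from your argument.
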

\begin{proof}
Since $2$-functors preserve adjoints and the $V$-structure on $V$ is the one induced by its multiplication, the result follows.\\
\end{proof}
We can define a $2$-functor
$$\mathtt{Pos}^V_{ \vee } \rightarrow \Vcats^{\odot}.$$
As expected we have the following result.
\begin{teorema}\label{Equi}
  The two $2$-functors
  \[
  \begin{tikzcd}
    \Vcats^{\odot} \ar[r, shift left] & \mathtt{Pos}^V_{ \vee } \ar[l, shift left]
  \end{tikzcd}
  \]
  establish a $2$-equivalence between $\Vcats^{\odot}$ and $\mathtt{Pos}^V_{ \vee }$.
\end{teorema}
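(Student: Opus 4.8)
The two $2$-functors involved have already been built in the preceding discussion, and a key observation is that both act as the identity on underlying sets and on underlying functions: only the interpretation of the extra structure changes. The plan is therefore to verify that the two composites are (strictly) the identity, so that the claimed $2$-equivalence follows at once---indeed as a $2$-isomorphism.

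First I would treat the round trip $\Vcats^{\odot} \to \mathtt{Pos}^V_{\vee} \to \Vcats^{\odot}$. Starting from a separated copowered $V$-category $(X,a)$, the first functor produces $(X,\leq_a)$ together with the action $- \odot =$, and the second reconstructs a $V$-structure $a'$ determined by $u \odot x \leq_a y \iff u \leq a'(x,y)$. Invoking the copower adjunction $a(u \odot x, y) = [u,a(x,y)]$ together with the adjunction $- \otimes u \dashv [u,-]$, one computes
$$ u \odot x \leq_a y \iff k \leq a(u \odot x, y) = [u,a(x,y)] \iff u \leq a(x,y),$$
whence $a' = a$ and the composite is the identity on objects.

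Next, for the round trip $\mathtt{Pos}^V_{\vee} \to \Vcats^{\odot} \to \mathtt{Pos}^V_{\vee}$, begin with $(X,\leq_X,\rho)$. The induced $V$-structure $a$ satisfies $\rho(v,x) \leq_X y \iff v \leq a(x,y)$, so the underlying order is recovered from $x \leq_a y \iff k \leq a(x,y) \iff \rho(k,x) \leq_X y \iff x \leq_X y$, using $\rho(k,x)=x$; hence $\leq_a = \leq_X$. The action is recovered as the copower of $(X,a)$, which by the preceding lemma is exactly $\rho_x$, and since separatedness makes copowers strictly unique this recovered action equals $\rho$. Thus this composite too is the identity on objects.

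It remains to match the $2$-categorical data. Since both functors act as the identity on underlying functions, on arrows it suffices to note that copower-preservation of a $V$-functor, $f(u \odot x) \simeq u \odot f(x)$, and commutativity of the action square express the same condition once separatedness upgrades the $\simeq$ to an equality; and that the $2$-cell orders agree, because $f \leq g$ in $\Vcat$ means $f(x) \leq_b g(x)$ for all $x$, which is precisely the pointwise order on monotone maps in $\mathtt{Pos}^V_{\vee}$ under the identification $\leq_b = \leq_Y$. The main point to watch throughout is the systematic appeal to separatedness---passing from isomorphism to genuine equality of copowers and of reconstructed structures---which is exactly why that hypothesis is built into $\Vcats^{\odot}$ and is what lets the composites be literal identities rather than merely pseudonatural equivalences.
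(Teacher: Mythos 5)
Your proof is correct and takes essentially the same approach as the paper's (sketched) proof: the two iff-chains you give---showing the reconstructed structure $a'$ equals $a$ via $u \odot x \leq_a y \iff k \leq a(u\odot x,y) = [u,a(x,y)] \iff u \leq a(x,y)$, and the recovered order satisfies $x \leq_a y \iff \rho(k,x) \leq_X y \iff x \leq_X y$---are exactly the computations in the paper. Your extra verifications (strict recovery of the action via separatedness, and the checks on arrows and $2$-cells) simply fill in details the paper's sketch leaves implicit.
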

\begin{proof}(Sketch)
Let $(X,a)$ be a separated copowered $V$-category. The indued $V$-structure
$\tilde{a}$ on the underlying ordered set $(X,\leq_a)$ is defined as, for $x,y \in X$ and $v\in V$,
$$ v \leq \tilde{a}(x,y) \mbox{ $ \iff$ } v \odot x \leq_a y,$$
where
$$ x \leq_a y \mbox{ $\iff$ } k \leq a(x,y).$$
Hence, for all $u \in V$,
\begin{alignat*}{2}
	u \leq \tilde{a}(x,y) & \mbox{ $\iff$ } u \odot x \leq_a y \\
  & \mbox{ $\iff$ } k \leq a(u \odot x, y)= [u, a(x,y)] \\
  & \mbox{ $\iff$ } u \leq a(x,y),
\end{alignat*}
which implies $\tilde{a}(x,y) = a(x,y).$
Consider $(X,\leq_X, \rho)$ in $\mathtt{Pos}^V_{ \vee }$. Then, by calling $\leq_a$ the underlying order structure of the induced $V$-category $(X,a)$, we have
\begin{alignat*}{2}
	x  \leq_a y  & \mbox{ $\iff$ } k \leq a(x,y) \\
  & \mbox{ $\iff$ } \rho(k,x) \leq_X y \\
  & \mbox{ $\iff$ } x \leq_X y.
\end{alignat*}
\end{proof}

We investigate now if we can further tune the $2$ equivalence
$$\Vcats^{\odot} \simeq \mathtt{Pos}^V_{ \vee }. \ \ \spadesuit$$
Remember that a sufficient and necessary condition for a $V$-category $(X,a)$ to be cocomplete is to be copowered and to have all conical suprema (see \cite{ECT}). In the light of this result, it is natural to ask if we can restrict $\spadesuit$ to a $2$-equivalence of the form:
$$\Coco{\Vcats} \simeq \Coco{\mathtt{Pos}}^V_{?},$$
where $?$ reflects the \textit{a priori} unknown property (or properties) that we have to add in order to obtain an equivalence.\par\medskip

Before we dip further into our quest, let us spend a few words about $\Coco{\mathtt{Pos}}$. We proved that
$$\Coco{\mathtt{Pos}} \simeq \mathtt{Sup} \simeq \Alg{\mathbf{2}}.$$
Here $\mathtt{Sup}$ denotes the $2$-category of suplattices with suprema preserving monotone maps among them, while $\Alg{\mathbf{2}}$ is the Eilenberg-Moore category for the powerset monad $\mathtt{P}_{\mathbf{2}}$. Since $\mathtt{P}_{\mathbf{2}}$ is a \textit{strong commutative monad}, $\mathtt{Sup}$ becomes a closed symmetric monoidal category (see Appendix \ref{AppendixI} or \cite{EGTG} for a more direct construction) $(\mathtt{Sup}, \Tensor{\mathbf{2}}, \mathbf{2})$ with the monoidal structure that classifies bimorphisms. Here a bimorphism in $\mathtt{Sup}$ is a monotone map of type $f : X \boxtimes Y \rightarrow Z$ such that $f$ preserves suprema separately in both variables and where $\boxtimes$ is the tensor product we defined in Example \ref{TensorMon} of Examples \ref{ExamplesVcat}.
We have the following lemma.
\begin{lem}\label{CocoSup}
  Let $(X,a)$ be a cocomplete separated $V$-category. Then its underlying ordered set $(X, \leq_a)$ is cocomplete. Moreover, if $f : (X,a) \rightarrow (Y,b)$ is a cocontinuous $V$-functor between cocomplete $V$-categories, then $f : (X, \leq_a) \rightarrow (Y, \leq_b)$ preserves suprema.
\end{lem}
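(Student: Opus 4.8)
The plan is to extract the supremum in the underlying poset directly from the strict algebra structure supplied by Theorem~\ref{CocoSep}. Since $(X,a)$ is separated cocomplete, there is a $V$-functor $\Sup_X : \mathbb{D}(X) \to X$ with $\Sup_X \cdot \Yo{X} = \Id_X$, and by the Remark following Theorem~\ref{Coco} we moreover have an enriched adjunction $\Sup_X \dashv \Yo{X}$ in $\Vcat$. First I would record the underlying poset of $\mathbb{D}(X)$: from $\mathbb{D}(X)(\phi,\psi) = \bigwedge_{x} [\phi(x),\psi(x)]$ one reads off that $\phi \leq \psi$ holds precisely when $\phi(x) \leq \psi(x)$ for all $x$, i.e.\ the order on $\mathbb{D}(X)$ is pointwise. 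As $V$ is complete and $\otimes$ preserves suprema, pointwise suprema of presheaves exist and are again presheaves, so $(\mathbb{D}(X),\leq)$ is a cocomplete poset.

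Next I would transport the adjunction to the underlying orders. The underlying-order assignment $\Vcat \to \mathtt{Ord}$ is a $2$-functor, hence preserves adjunctions, so $\Sup_X \dashv \Yo{X}$ also holds between $(\mathbb{D}(X),\leq)$ and $(X,\leq_a)$. Given a family $\{x_i\}_{i\in I}$ in $X$, I form $\bigvee_i \Yo{X}(x_i)$ in $\mathbb{D}(X)$ and put $s := \Sup_X\!\big(\bigvee_i \Yo{X}(x_i)\big)$. To see that $s$ is the supremum of $\{x_i\}$ in $(X,\leq_a)$, I verify its universal property using the Galois connection and full faithfulness of Yoneda (which on underlying orders reads $\Yo{X}(x)\leq \Yo{X}(y) \iff x \leq_a y$, since $\mathbb{D}(X)(\Yo{X}(x),\Yo{X}(y)) = a(x,y)$ by the Yoneda lemma). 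Concretely, for every $y\in X$,
\[
s \leq_a y \iff \textstyle\bigvee_i \Yo{X}(x_i) \leq \Yo{X}(y) \iff \big(\forall i,\ \Yo{X}(x_i) \leq \Yo{X}(y)\big) \iff \big(\forall i,\ x_i \leq_a y\big),
\]
which is exactly the defining property of the least upper bound. Hence $(X,\leq_a)$ is cocomplete, with $\bigvee_i x_i = \Sup_X\!\big(\bigvee_i \Yo{X}(x_i)\big)$.

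For the second assertion, I would use three facts: cocontinuity of $f$ gives the strict algebra-morphism identity $f \cdot \Sup_X = \Sup_Y \cdot \mathbb{D}(f)$; the unit $\Yo{-}$ is natural, so $\mathbb{D}(f) \cdot \Yo{X} = \Yo{Y} \cdot f$; and, as noted in the proof of Theorem~\ref{SetMon}, $\mathbb{D}(f) = - \dist f^{*}$ is a left adjoint (with right adjoint $- \dist f_{*}$), whence its underlying monotone map preserves suprema. Combining these with the formula from the first part yields
\[
f\Big(\textstyle\bigvee_i x_i\Big) = f\,\Sup_X\Big(\textstyle\bigvee_i \Yo{X}(x_i)\Big) = \Sup_Y\,\mathbb{D}(f)\Big(\textstyle\bigvee_i \Yo{X}(x_i)\Big) = \Sup_Y\Big(\textstyle\bigvee_i \Yo{Y}(f(x_i))\Big) = \textstyle\bigvee_i f(x_i),
\]
so $f : (X,\leq_a) \to (Y,\leq_b)$ preserves suprema.

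The genuinely load-bearing step is the universal-property computation in the second paragraph: it is where the enriched adjunction $\Sup_X \dashv \Yo{X}$ is converted into ordinary least-upper-bounds, and it silently depends on the pointwise description of suprema in $\mathbb{D}(X)$ together with order-reflectingness of Yoneda. Everything else is formal transport of adjunctions and naturality along the underlying-order $2$-functor, so I expect no real difficulty there; the only point requiring care is checking that pointwise suprema of presheaves remain presheaves, which follows from $\otimes$ preserving suprema.
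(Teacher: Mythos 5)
The paper states Lemma \ref{CocoSup} without proof---it is treated as standard and used immediately to construct the functor $\Coco{\Vcats} \rightarrow \Mod$---so there is no in-paper argument to compare yours against; judged on its own, your proof is correct and complete. Every ingredient you invoke is already established in the paper: the adjunction $\Sup_X \dashv \Yo{X}$ from the Kock--Z\"oberlein remark after Theorem \ref{Coco}, preservation of adjunctions by the underlying-order $2$-functor (the paper uses exactly this fact in Subsection \ref{ModAlg}), the pointwise reading of the order on $\mathbb{D}(X)$, stability of presheaves under pointwise suprema because $\otimes$ preserves suprema in each variable, full faithfulness of $\Yo{X}$ via the Yoneda lemma, and the adjunction $\mathbb{D}(f) \dashv - \dist f_{*}$ recorded in the proof of Theorem \ref{SetMon}. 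Your formula $\bigvee_i x_i = \Sup_X\bigl(\bigvee_i \Yo{X}(x_i)\bigr)$ is precisely the identification of a conical supremum as the colimit weighted by the pointwise join of representables, which is the mechanism the paper implicitly relies on when it recalls that cocomplete means copowered plus all conical suprema. The only point deserving care is your use of the strict identities $\Sup_X \cdot \Yo{X} = \Id_X$ and $f \cdot \Sup_X = \Sup_Y \cdot \mathbb{D}(f)$: these require separatedness of $X$ and $Y$ (so that the pseudo-algebra data is strict), which the first sentence of the lemma grants but the second sentence does not state explicitly. Since the whole subsection restricts to $\Vcats$, separatedness is clearly intended there too, and in any case your argument survives with $\simeq$ in place of $=$ (suprema then being determined only up to equivalence), so this is a matter of reading the statement in context rather than a gap.
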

In the light of what we wrote before, and because of the properties of arrows in $\mathtt{Pos}^V_{ \vee }$, the copower of a cocomplete separated $V$-category $(X,a)$ extends to a unique suprema preserving map
$$ (V,\leq) \Tensor{\mathbf{2}} (X,\leq_a) \rightarrow (X, \leq_a).$$
This shows that we have a $2$-functor
$$\Coco{\Vcats} \rightarrow \Mod,$$
where the latter is the category whose objects are suplattices $(X,\leq_X)$ endowed with an action $\rho: V \Tensor{\mathbf{2}} X \rightarrow X$ and whose arrows are suprema preserving equivariant monotone maps.
\begin{re}
  Notice that, since the monoidal structure on $\mathtt{Sup}$ classifies bimorphisms, we can freely curry any arrow (in $\mathtt{Sup}$) of the type
  $$ f : X\Tensor{\mathbf{2}} Y \rightarrow Z.$$
  That is to say, for any $x \in X$ (and similarly for any $y \in Y$), the curried version of $f$,
  $$ f_x : Y \rightarrow Z, \ \  f_x : y \mapsto f(x,y),$$
  is an arrow in $\mathtt{Sup}$.
\end{re}
In order to build a $2$-functor in the opposite direction,
$$\Mod \rightarrow \Coco{\Vcats},$$
we need the following result.
\begin{lem}
  Let $(X, \leq_X, \rho)$ be an object of $\Mod$. Then the corresponding (separated) $V$-category $(X,a)$ is cocomplete. Moreover, if $f : (X, \leq_X, \rho) \rightarrow (Y,\leq_Y,\theta)$ is an arrow in $\Mod$, then the corresponding $V$-functor between the associates $V$-categories $f : (X,a) \rightarrow (Y,b)$ is cocontinuous.
\end{lem}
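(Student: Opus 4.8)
The plan is to run everything through the criterion recalled after Theorem~\ref{Equi}: a separated $V$-category is cocomplete exactly when it is copowered and has all conical suprema. Since an object $(X,\leq_X,\rho)$ of $\Mod$ is in particular an object of $\mathtt{Pos}^V_{\vee}$, Theorem~\ref{Equi} already supplies the copowered $V$-category $(X,a)$ with copowers $\rho_x = \rho(-,x)$ and defining adjunction $\rho_x \dashv a(x,=)$. Hence the first assertion reduces to producing all conical suprema, and the underlying poset $(X,\leq_X)$, being a suplattice, already has all suprema; what remains is to verify that these poset-suprema are conical suprema in the enriched sense.

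This verification is the crux. Given a family $\{x_i\}$ with poset-supremum $s = \bigvee_i x_i$, I would use that $\rho$, being a morphism in $\Sup$ out of $V \Tensor{\mathbf{2}} X$, preserves suprema in the $X$-variable, so that $\rho(v,s) = \bigvee_i \rho(v,x_i)$ for every $v \in V$. Combining this with the adjunction $\rho_x \dashv a(x,=)$ yields, for all $v$, the chain $v \leq a(s,y) \iff \rho(v,s) \leq_X y \iff \bigvee_i \rho(v,x_i) \leq_X y \iff v \leq \bigwedge_i a(x_i,y)$, whence $a(s,y) = \bigwedge_i a(x_i,y)$ for every $y$. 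This is precisely the statement that $s$ is the conical supremum of $\{x_i\}$ in $(X,a)$. Thus $(X,a)$ is copowered and has all conical suprema, and therefore is cocomplete by the cited criterion.

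For the second assertion I would invoke the functorial counterpart of that same criterion: a $V$-functor between cocomplete $V$-categories is cocontinuous as soon as it preserves copowers and conical suprema, since every weighted colimit is built from these two classes. An arrow $f$ in $\Mod$ is by definition a suprema-preserving equivariant monotone map. Suprema-preservation says exactly that $f$ preserves conical suprema, since we have just identified conical suprema with poset-suprema. Equivariance, $f(\rho(v,x)) = \theta(v,f(x))$, reads as $f(v \odot x) = v \odot f(x)$, which is the copower-preservation condition. Hence $f : (X,a) \rightarrow (Y,b)$ preserves both generating classes of colimits and is cocontinuous.

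The step I expect to be the genuine obstacle is the passage in the second paragraph from the mere existence of a poset-supremum to the enriched identity $a(s,-) = \bigwedge_i a(x_i,-)$. In a general copowered $V$-category an underlying supremum need not be conical, and the upgrade hinges on $\rho$ preserving suprema in the $X$-variable; this is available precisely because an object of $\Mod$ carries an action that is a morphism in $\Sup$ (equivalently, a bimorphism $V \times X \rightarrow X$). Everything else is a formal consequence of Theorem~\ref{Equi} together with the cocompleteness criterion, so once this identification is secured both halves of the lemma follow routinely.
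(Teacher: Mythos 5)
Your proof is correct, and it is precisely the argument the paper intends: the paper states this lemma without proof, but the surrounding text sets up exactly your route --- the copowered-plus-conical-suprema criterion from \cite{ECT} recalled immediately before the lemma, copoweredness via Theorem \ref{Equi}, and the fact (flagged in the paper's remark on currying in $\mathtt{Sup}$) that the action, being a $\mathtt{Sup}$-morphism out of $V \Tensor{\mathbf{2}} X$, preserves suprema in the $X$-variable, which is the crux you correctly identify for upgrading poset-suprema to conical suprema. Your treatment of the second assertion (equivariance $=$ copower preservation, suprema preservation $=$ conical-colimit preservation, and cocontinuity from these two) likewise fills in the intended details, so there is nothing to correct.
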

With the aid of the two previous lemmas, we obtain the analouge of Theorem \ref{Equi}.
\begin{teorema}\label{Rest}
  The $2$-equivalence
  $$\Vcats^{\odot} \simeq \mathtt{Pos}^V_{ \vee }$$
  restricts to a $2$-equivalence
  $$ \Coco{\Vcats} \simeq \Mod .$$
\end{teorema}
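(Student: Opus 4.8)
The plan is to exhibit the two $2$-functors constructed just above this theorem — call them $\Phi : \Coco{\Vcats} \rightarrow \Mod$, sending a cocomplete separated $V$-category to its underlying suplattice equipped with the bimorphism action that extends the copower, and $\Psi : \Mod \rightarrow \Coco{\Vcats}$, sending a module to its associated cocomplete $V$-category — and to recognise them as the domain/codomain restrictions of the $2$-equivalence $F : \Vcats^{\odot} \rightleftarrows \mathtt{Pos}^V_{ \vee } : G$ of Theorem \ref{Equi}. The first thing I would record is the structural observation that $\Coco{\Vcats}$ and $\Mod$ sit inside $\Vcats^{\odot}$ and $\mathtt{Pos}^V_{ \vee }$ as (non-full) sub-$2$-categories: every cocomplete separated $V$-category is copowered, and every object of $\Mod$ is in particular an object of $\mathtt{Pos}^V_{ \vee }$ once one forgets that $X$ is a suplattice and that $\rho$ is a bimorphism into the base, retaining only suprema-preservation in the $V$-variable. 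Under these inclusions $\Phi$ and $\Psi$ agree with $F$ and $G$ on objects and on arrows, since in both worlds a $V$-category is sent to its underlying ordered set with the copower as action.

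That $\Phi$ and $\Psi$ are well defined as $2$-functors between the restricted categories is precisely what the two preceding lemmas supply: Lemma \ref{CocoSup} guarantees that $F$ carries a cocomplete $V$-category to a suplattice and a cocontinuous $V$-functor to a suprema-preserving map, so $F$ corestricts to $\Mod$; the following lemma guarantees that $G$ carries an object of $\Mod$ to a cocomplete $V$-category and an arrow of $\Mod$ to a cocontinuous $V$-functor, so $G$ corestricts to $\Coco{\Vcats}$. Granting these, no further computation about the underlying formulas is needed.

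With both corestrictions in hand, I would deduce the equivalence formally by transporting the unit $\eta : \Id \Rightarrow GF$ and counit $\varepsilon : FG \Rightarrow \Id$ of Theorem \ref{Equi} along the inclusions. The one point that genuinely needs checking is that the components of $\eta$ and $\varepsilon$ are arrows of the restricted categories, and this is where the non-fullness of the inclusions must be handled with care. At a cocomplete object $(X,a)$ the component $\eta_{(X,a)}$ is an isomorphism of $V$-categories between cocomplete ones, hence cocontinuous, since an invertible $V$-functor preserves every weighted colimit; thus it is an isomorphism in $\Coco{\Vcats}$. Dually, at an object of $\Mod$ the corresponding component of $\varepsilon$ is an order-isomorphism between suplattices and therefore automatically preserves suprema, so it is an isomorphism in $\Mod$. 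Applying the same remark to the inverses shows that $\eta$ and $\varepsilon$ restrict to $2$-natural isomorphisms $\Id \Rightarrow \Psi\Phi$ and $\Phi\Psi \Rightarrow \Id$. Finally, because the hom-posets of $\Coco{\Vcats}$ and $\Mod$ are sub-posets of those of $\Vcats^{\odot}$ and $\mathtt{Pos}^V_{ \vee }$ which correspond to one another under $F$, the local order-isomorphisms of hom-posets restrict accordingly, so $\Phi$ and $\Psi$ are $2$-fully faithful; this yields the desired $2$-equivalence.

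I expect the only delicate part to be exactly this bookkeeping around non-fullness: since a copower-preserving $V$-functor need not be cocontinuous, one must ensure that $\Phi$ and $\Psi$ land in the correct morphism classes and that local fullness is read off correctly. This is governed by the decomposition ``cocomplete $=$ copowered $+$ conical colimits'' and its functorial analogue, which is already packaged into the two lemmas above; once those are invoked, the remainder is the purely formal restriction argument just sketched.
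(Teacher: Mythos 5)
Your proposal is correct and follows essentially the same route as the paper: the paper also obtains Theorem \ref{Rest} by corestricting the two $2$-functors of Theorem \ref{Equi} using Lemma \ref{CocoSup} and its companion lemma for $\Mod$, then noting that the (essentially identical) unit and counit restrict. Your extra care about the non-fullness of the inclusions and the automatic cocontinuity/sup-preservation of the invertible unit and counit components is exactly the bookkeeping the paper leaves implicit, so nothing further is needed.
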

\subsection{Monadicity}\label{MonadSup}
Before we end this section, we would like to point out a "nice" consequence of the result we have just proved: a characterization of the enriched power set monad $\mathtt{P}_V$ as the monad obtained by the composition of the \textit{vanilla} powerset monad $\mathtt{P}_{\mathbf{2}}$ with an "action" monad we are now going to describe.\par\medskip

As we mentioned before---and proved in Subsection \ref{CocoMonad}---there is an equivalence of categories
$$\Coco{\Vcats} \simeq \Alg{V},$$
where the latter is the Eilenberg-Moore category for the enriched powerset monad $\mathtt{P}_V$, from which it follows that $\Mod$ is monadic over $\Sets$.
We have the following commutative diagram
\[
\begin{tikzcd}
 \Mod \ar[r] \ar[dr] & \Alg{V} \ar[d] \\
              & \Sets
\end{tikzcd}
\qquad
\begin{tikzcd}
 (X,\leq_X, \rho) \ar[r, mapsto] \ar[dr, mapsto] & (X, a_X) \ar[d, mapsto] \\
              & X.
\end{tikzcd}
\]
Since we know that $\Mod \rightarrow \Alg{V}$ is an equivalence, and since the forgetful functor $\Alg{V} \rightarrow \Sets$ is monadic, by the commutativity of the previous diagram it follows that the forgetful functor $\Mod \rightarrow \Sets$ is monadic too. We can decompose the forgetful functor $\Mod \rightarrow \Sets$ as follows.
\[
\begin{tikzcd}
 \Mod \ar[r] \ar[d] & \Alg{V} \ar[d] \\
        \mathtt{Sup} \ar[r]     & \Sets
\end{tikzcd}
\qquad
\begin{tikzcd}
 (X,\leq_X, \rho) \ar[r, mapsto] \ar[d, mapsto] & (X, a_X) \ar[d, mapsto] \\
      (X,\leq_X)  \ar[r, mapsto]      & X
\end{tikzcd}
\]
Since $\mathtt{Sup} \rightarrow \Sets$ is the right adjoint to the powerset functor $$\mathtt{P}_{\mathbf{2}} : \Sets \rightarrow \mathtt{Sup}, \ \ X \mapsto (\mathtt{P}_{\mathbf{2}}(X), \subseteq),$$
in order to conclude we only need to provide a left adjoint to $\Mod \rightarrow \mathtt{Sup}$. Then by composing it with the powerset functor we would have our desired monad.\par \medskip

Let's reveal the identity of the butler in our detective story. For a suplattice $(X, \leq_X)$, we define an action on $V \Tensor{\mathbf{2}} X$ by:

$$
 V \Tensor{\mathbf{2}} V \Tensor{\mathbf{2}} X  \xrightarrow{ \otimes \Tensor{\mathbf{2}} \Id}   V \Tensor{\mathbf{2}} X,  \mbox{ ($ \otimes : V \Tensor{\mathbf{2}} V \rightarrow V $ is the multiplication of V). }
$$

The fact that it defines an action follows directly from the fact that $\otimes$  defines a monoid structure on $V$. Moreover, it is clear that if we have a morphism $f : X \rightarrow Y$, then $\Id \Tensor{\mathbf{2}} f$ defines an equivariant (with respect to the aforementioned action) arrow.
\begin{prop}
	Let $V \Tensor{\mathbf{2}} = : \mathtt{Sup} \rightarrow \Mod$ be the functor we described before. Then it is left adjoint to the forgetful functor  $U :\Mod \rightarrow \mathtt{Sup}.$
\end{prop}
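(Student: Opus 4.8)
The plan is to prove the adjunction by exhibiting a universal arrow, that is, by showing that for every suplattice $X$ the object $V \Tensor{\mathbf{2}} X$ (with the action $\otimes \Tensor{\mathbf{2}} \Id$) together with a canonical unit map enjoys the universal property of the free $V$-module on $X$. Concretely, I would define the unit $\eta_X : X \to U(V \Tensor{\mathbf{2}} X)$ by $x \mapsto k \Tensor{\mathbf{2}} x$, where $k$ is the neutral element of $V$ and $v \Tensor{\mathbf{2}} x$ denotes the image of $(v,x)$ under the universal bimorphism $V \times X \to V \Tensor{\mathbf{2}} X$. Since the monoidal structure $\Tensor{\mathbf{2}}$ classifies bimorphisms, the map $x \mapsto k \Tensor{\mathbf{2}} x$ is one leg of a bimorphism and hence lies in $\mathtt{Sup}$; naturality of $\eta$ in $X$ is immediate from $V \Tensor{\mathbf{2}} f = \Id \Tensor{\mathbf{2}} f$.

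Next, given an object $(Y, \leq_Y, \theta)$ of $\Mod$ and an arrow $f : X \to Y$ in $\mathtt{Sup}$, I would produce the unique equivariant extension $\bar{f} : V \Tensor{\mathbf{2}} X \to Y$. The assignment $(v,x) \mapsto \theta(v, f(x))$ is a bimorphism: it preserves suprema in $v$ by the third module axiom for $\theta$, and in $x$ because $f$ is in $\mathtt{Sup}$ and $\theta$ preserves suprema in its second variable. By the universal property of $\Tensor{\mathbf{2}}$ it therefore factors through a unique $\mathtt{Sup}$-morphism $\bar{f}$ with $\bar{f}(v \Tensor{\mathbf{2}} x) = \theta(v, f(x))$. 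Equivariance, tested on pure tensors, reduces to $\theta(w \otimes v, f(x)) = \theta(w, \theta(v, f(x)))$, which is exactly the associativity module axiom for $\theta$; and the triangle identity $\bar{f} \cdot \eta_X = f$ reads $\theta(k, f(x)) = f(x)$, which is the unit module axiom. Both identities extend from pure tensors to all of $V \Tensor{\mathbf{2}} X$ by sup-preservation, since the pure tensors generate $V \Tensor{\mathbf{2}} X$ under suprema.

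For uniqueness, suppose $g : V \Tensor{\mathbf{2}} X \to Y$ is equivariant with $g \cdot \eta_X = f$. Writing $v \Tensor{\mathbf{2}} x = \rho_{V \Tensor{} X}(v, k \Tensor{\mathbf{2}} x) = \rho_{V \Tensor{} X}(v, \eta_X(x))$, equivariance gives $g(v \Tensor{\mathbf{2}} x) = \theta(v, g(\eta_X(x))) = \theta(v, f(x)) = \bar{f}(v \Tensor{\mathbf{2}} x)$, and since both maps preserve suprema and pure tensors generate, $g = \bar{f}$. This establishes a bijection $\Mod(V \Tensor{\mathbf{2}} X, Y) \cong \mathtt{Sup}(X, UY)$, natural in both arguments, proving $V \Tensor{\mathbf{2}} = \,\dashv\, U$. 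I expect the only genuinely delicate point to be the well-definedness of $\bar{f}$, namely verifying the bimorphism condition so that it descends along the universal bimorphism; everything else is a direct reading-off of the three module axioms together with the standing fact that $\Tensor{\mathbf{2}}$ classifies bimorphisms, so that maps out of $V \Tensor{\mathbf{2}} X$ are determined by their values on pure tensors.
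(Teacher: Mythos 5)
Your proposal is correct and is essentially the paper's own proof, recast in universal-property form: the paper exhibits the same unit $x \mapsto k \Tensor{\mathbf{2}} x$ and takes as counit the action $\rho : V \Tensor{\mathbf{2}} Y \rightarrow Y$, which is exactly your extension $\bar{f}$ specialized to $f = \Id$ (and conversely $\bar{f} = \rho \cdot (\Id \Tensor{\mathbf{2}} f)$), so the two presentations carry identical data. Your verifications---the bimorphism condition for well-definedness, equivariance via the associativity axiom, and uniqueness via generation of $V \Tensor{\mathbf{2}} X$ by pure tensors under suprema---are precisely the details the paper compresses into ``the unit-counit equations are easily seen to be satisfied.''
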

\begin{proof}
	The unit at $(X,\leq_X)$ in $\mathtt{Sup}$ is given by
	$$ \eta_X : X \xrightarrow{\sim} \mathbf{2} \Tensor{\mathbf{2}} X \xrightarrow{} V \Tensor{\mathbf{2}} X, \quad x \mapsto k \Tensor{2} x, $$
	while the counit at $(Y,\leq_Y, \rho)$ in $\Mod$ is given by
	$$\epsilon_Y : V \Tensor{\mathbf{2}} Y \xrightarrow{\rho}   Y.$$
	The unit-counit equations are easily seen to be satisfied.\\
\end{proof}
\begin{re}
  We can easily prove that the aforementioned adjunction is monadic. It is straightforward to show that $U :\Mod \rightarrow \mathtt{Sup}$ reflects isomorphisms. Let $X \rightrightarrows Y$ be a $U$-split pair in $\Mod$. Let $Z$ be the coequalizer of this $U$-split pair in $\mathtt{Sup}$, then
  \[
  \begin{tikzcd}
   V \Tensor{\mathbf{2}} X \arrow[r, shift left]
   \arrow[r, shift right] & V \Tensor{\mathbf{2}} Y \arrow[l, bend right] \arrow[r] &  V \Tensor{\mathbf{2}} Z \arrow[l,  bend right]
  \end{tikzcd}
  \]
  \\
  is a split coequalizer in $\mathtt{Sup}$. By using the split and the universal property of coequalizers, we have a unique arrow $V \Tensor{\mathbf{2}} Z \rightarrow Z$, as depicted in the diagram
  \begin{center}
  \begin{tikzcd}
  V \Tensor{\mathbf{2}} X \arrow[r, shift left]
  \arrow[r, shift right] \arrow[d]
  & V \Tensor{\mathbf{2}} Y \arrow[r] \arrow[d ]  & V \Tensor{\mathbf{2}} Z \arrow[d, "\exists !", dashed] \\
  X \arrow[r, shift left]
  \arrow[r, shift right]
  & Y\arrow[r]  & Z.
  \end{tikzcd}
\end{center}
  Since $ Y \rightarrow Z$ is an epimorphism, and since $\Tensor{\mathbf{2}}$ preserves epimorphisms, the action $V \Tensor{\mathbf{2}} Z \rightarrow Z$ makes $Z$ an object in $\Mod$ and thus $ Y \rightarrow Z$ is the coequalizer of $X \rightrightarrows Y$ in $\Mod$.\\
	The resulting monad $T =  (V \Tensor{\mathbf{2}} =, \eta, \mu)$, has as unit
  $$ \eta_X : X \xrightarrow{\sim} 1 \Tensor{\mathbf{2}} X \rightarrow V \Tensor{\mathbf{2}} M, $$
  and as multiplication
$$
  \mu_X :  V\Tensor{\mathbf{2}} V \Tensor{\mathbf{2}} X \xrightarrow{\otimes \Tensor{\mathbf{2}}  \Id}   V \Tensor{\mathbf{2}} X.
$$
\end{re}

If we compose $\mathtt{P}_{\mathbf{2}}$ with $V \Tensor{\mathbf{2}} =$, we have the left adjoint to the monadic forgetful functor $\Mod \rightarrow \Sets$ we were looking for. In this way we obtain a $\Sets$ monad $V \Tensor{\mathbf{2}} \mathtt{P}_{\mathbf{2}}(=)$ that sends a set $X$ to the underlying set of $(V, \leq) \Tensor{\mathbf{2}} (\mathtt{P}_{\mathbf{2}}(X), \subseteq)$ and a function $f : X \rightarrow Y$ to $\Id \Tensor{\mathbf{2}}\mathtt{P}_{\mathbf{2}}(f)$.\\
Hence we have an equivalence of monads:
$$ \mathtt{P}_V \simeq V \Tensor{\mathbf{2}} \mathtt{P}_{\mathbf{2}}.$$
\section{Quantale-Enriched Multicategories}
$(L,V)$-categories are a special case of the more general $(T,V)$-categories, where the list monad $L$ is considered. They are also the order-enriched version of \textit{multicategories} (see \cite{leinster2004higher} and \cite{hofmann2014monoidal} for an account on them, and \cite{Lambek} for a historical perspective). The basic idea is that, instead of having arrows with just a single object as the domain, we allow them to have as domain a list of objects.\par\medskip
In this section we introduce $(L,V)$-categories and some of their basic constructions, by mirroring what we have done in the previous section.
\subsection{(L,V)-Categories and (L,V)-functors}
Recall that the list monad is the monad whose underlying functor is given by
$$L : \mathtt{Sets} \rightarrow \mathtt{Sets}, \ \ f: X \rightarrow Y \mapsto Lf : \amalg_{n \geq 0} X^n \rightarrow \amalg_{m \geq 0} Y^m , \ \ \underline{x} =(x_1,..., x_n) \mapsto (f(x_1),...,f(x_n)),$$
and whose unit and multiplication at a set $X$ are defined as:
\begin{itemize}
	\item $e_X : X \rightarrow L(X),  \ \ x \mapsto (x);$
	\item $m_X : L^2(X) \rightarrow L(X), \ \ (\underline{x}_1,...,\underline{x}_n) \mapsto (x_{11}, ..., x_{1k}, ..., x_{n1}, ...,x_{nl}).$
\end{itemize}

\begin{re}
	Let $\underline{x}$, $\underline{w}$ be lists. In order to avoid possible confusion with the list of lists $\underline{\underline{y}} = (\underline{x},\underline{w})$, we denote the list obtained by concatenating $\underline{x}$ and $\underline{w}$ as $(\underline{x};\underline{w})$. Moreover, in the case in which one of the two is a single element list, we use the shortcut $(\underline{x};w)$ instead of $(\underline{x};(w)).$
\end{re}

We can extend (in a functorial way) the list monad $L$ to $\Mat{V}$ by defining, for $r: X \xslashedrightarrow{} Y$:
$$\tilde{L}r : L(X)  \xslashedrightarrow{} L(Y),  \ \ (\underline{x}, \underline{y}) \mapsto \begin{cases}
r(x_1,y_1) \otimes ... \otimes r(x_n, y_n) &\mbox{ if the two lists have the same length,} \\
\perp &\mbox{ otherwise.} \\
\end{cases} $$
One can prove that this particular extension defines a monad on $\Mat{V}$ that, moreover, preserves the involution
$$(-)^{\circ} : \Mat{V}^{\op} \rightarrow \Mat{V}.$$
\begin{re}
	From now on we will use $L$ for both the ordinary list monad and its extension to $\Mat{V}$.
\end{re}
This allows us to define the order-enriched category $\Mat{(L,V)}$ in which a morphism $r: X \kmodto Y$ is a $V$-relation of the form
$$ r : L(X) \xslashedrightarrow{} Y,$$
and in which composition is given by
$$ s \ldist r = s \dist Lr \dist m_X^{\circ},$$
where $e_X^{\circ} : X \kmodto X$ is the identity.
\begin{re}\label{K}
	Note that, due to the Kleisli-style composition we defined, $ - \ldist r$ preserves suprema, but $s \ldist (=)$ does not in general.
\end{re}
\begin{defin}
		An $(L,V)\mbox{-}$category is a pair $(X, a)$, where $X$ is a set and $a : X \kmodto X$ is an $(L,V)$-relation that satisfies:
	\begin{itemize}
		\item $e_X^{\circ} \leq a$;
		\item $a \ldist a \leq a.$
	\end{itemize}
\end{defin}
\begin{re}
	When $V = \mathbf{2}$, the $(L,\mathbf{2})$-structure of an $(L,\mathbf{2})$-category $(X,a)$ is a subset $a \subseteq L(X) \times X$ such that:
	\begin{itemize}
		\item for all $x \in X$, $ ((x),x) \in a$;
		\item given $(\underline{z}_1,...,\underline{z}_n) \in L^2(X)$, $\underline{x} \in LX$, and $y \in X$, such that
		$$((\underline{z}_1,...,\underline{z}_n),\underline{x} )\in La, \mbox{ and } (\underline{x}, y) \in a,$$
		then
		$$((\underline{z}_1;...;\underline{z}_n),y) \in a.$$
	\end{itemize}
	Notice that $La$ is the subset that corresponds to the relation $La : L^2(X) \times LX \rightarrow \mathbf{2}$ that one obtains by applying the extension of the list monad to the relation $a : LX \times X \rightarrow \mathbf{2}$.
\end{re}
\begin{defin}
	Let $(X,a)$ and $ (Y,b)$ be $(L,V)$-categories. An \textit{$(L,V)\mbox{-}$functor} $f: (X,a) \rightarrow (Y,b)$ is a function between the underlying sets such that
	$$ a \leq f^{\circ}\dist b \dist Lf,$$
	which, in pointwise terms, means that, for all $\underline{x} \in LX$, $y\in X$,
	$$ a(\underline{x},y) \leq b(Lf(\underline{x}), f(y)).$$
	If the equality holds, we call $f$ fully faithful.
\end{defin}
\begin{re}
If $V=\mathbf{2}$, then an $(L,\mathbf{2})$-functor $f: (X,a) \rightarrow (Y,b)$ satisfies, for all $\underline{x} \in LX$, $y\in X$,
$$ (\underline{x},y) \in a \mbox{ implies } (Lf(\underline{x}), f(y)) \in b.$$
Notice how this generalizes the classical monotonicity condition.
\end{re}
In this way we define $\Multi{V}$ as the category whose objects are $(L,V)$-categories and whose arrows are $(L,V)$-functors, moreover, $\Multi{V}$ becomes an order-enriched category if we define, for two $(L,V)$-functors $f,g : (X,a) \rightarrow (Y,b)$,
$$ f \leq g \mbox{  whenever }  k \leq \bigwedge_{x \in X} b(Lf((x) ),g(x)).$$
\begin{Exs}
	\begin{enumerate}
		\item Every set $X$ defines an $(L,V)$-category with $e^{\circ}_X$ as $(L,V)$-structure. In particular, we define the one-point $(L,V)$-category $E  = (1, e^{\circ}_{1} ).$
		\item Every set $X$ defines an $(L,V)$-category if we consider the free $L$-algebra on $X$, $(LX, m_X)$.
		\item $V$ itself defines an $(L,V)$-category where $[\underline{v}, w] = [v_1 \otimes ... \otimes v_n, w].$
		\item Let $(X,a)$ and $ (Y,b)$ be $(L,V)$-categories. We can form their tensor product $X \boxtimes Y  = (X \times Y, a \boxtimes b),$ where
		$$ a \boxtimes b (\gamma, (x,y)) = a(L\pi_1(\gamma), x) \otimes b(L\pi_2(\gamma), y).$$
		Here $\gamma \in L(X \times Y)$ and $\pi_1 ,\pi_2$ are the obvious projections. Unluckily, in general it is not true that $X \boxtimes E \simeq X.$
	\end{enumerate}
\end{Exs}
\begin{re}
	\label{Mon} In general, every monoidal $V$-category $(X,a, \mult, u_X)$ defines an $(L,V)$-category, where $${a(\underline{x}, y) = a(x_1 \mult ... \mult x_n,y).}$$\footnote{In particular $a((-), y) = a(u_X, y)$.} $(L,V)$-categories defined in this way are called \textit{representable} and their $(L,V)$-structure will be denoted by $ \hat{a} = a \dist \alpha$, where $$\alpha : L(X) \rightarrow X, \ \ \underline{x} \mapsto x_1 \mult ... \mult x_n, \quad (-) \mapsto u_X.$$
	In this way we can define a $2$-functor $ \mathtt{Kmp} :\Vcat^L \rightarrow \Multi{V}$ which has a left adjoint ${M : \Multi{V} \rightarrow \Vcat}$ that sends an $(L,V)$-category $(X,a)$ to $(LX, La \dist m_X^{\circ}, m_X)$ and an $(L,V)$-functor $f$ to $Lf$. $M$ is also a $2$-functor.\\
	Using the aforementioned adjunction, we can extend the monad $L$ to a monad on $\Multi{V}$, denoted by $L$ as well. Moreover, one can prove (see \cite{Chikhladze2015}) that there is an equivalence
	$$\Vcat^L \simeq \Multi{V}^L.$$
\end{re}
\begin{re}
A priori, due to the non-symmetric form of arrows in $\Mat{(L,V)}$, it is not clear how to define an $(L,V)$-category that seems to play the role of a dual. Luckily, we can use the adjunction $\mathtt{Kmp} \dashv M$ and the involution in $\Mat{V}$ to define, for an $(L,V)$-category $(X,a)$, its opposite category as $X^{\op} =(LX, m_X \dist L a^{\circ} \dist m_X)$. At first this might be seen as an \textit{ad hoc} definition, but if we apply this construction to a $V$-category $(X,a)$, seen as an $(L,V)$-category $(LX,e_X^{\circ} \dist a)$, we get
	$$X^{\op} = \mathtt{Kmp}(LX, L a^{\circ}),$$
	where $(LX, L a^{\circ})$ is the dual, as a $V$-category, of $(LX, La)$.
\end{re}
For any $(L,V)$-category $(X,a)$ we can form the $(L,V)$-category $\mathbb{D}_L(X)[-,=]$ whose underlying set consists of all $(L,V)$-functors of the form: $f : X^{\op} \boxtimes E\rightarrow V$ and whose $(L,V)$-structure is given by \label{DLstructure}
$$ \mathbb{D}_L(X)[\underline{f},g] =  \bigwedge_{(\underline{x}_1, ...,\underline{x}_n) \in LX^2 } [(f_1(\underline{x}_1),..., f_n(\underline{x}_n)), g(m_x((\underline{x}_1, ...,\underline{x}_n))))],$$
where $\underline{f} \in L( \mathbb{D}_L(X)) $ and $g \in  \mathbb{D}_L(X).$
\begin{re}\label{MultiYoneda}
We have a fully faithful functor, called the Yoneda embedding,
$$
\Yo{X} : X \rightarrow \mathbb{D}_L(X), \ \ x \mapsto a(-, x).
$$
Moreover, it can be proved that
$$ \mathbb{D}_L(X)[L\Yo{X}(\underline{x}),g] = g(\underline{x}).$$
The last result is known as the Yoneda Lemma.
\end{re}
\subsection{Distributors and the Presheaf Monad}
The relational point of view we used for introducing $V$-categories allows us to provide the corresponding notion of distributor for $(L,V)$-categories by considering the composition $\ldist$ defined in the previous section.
\begin{defin}\cite{CH09}
	Let $(X,a)$ and $ (Y,b)$ be $(L,V)$-categories. An \textit{$(L,V)$-distributor} $j  : (X,a) \kmodto (Y,b)$ is an $(L,V)$-relation between the underlying sets such that:
	\begin{itemize}
		\item $j \ldist a \leq j;$
		\item $b \ldist  j \leq j.$
	\end{itemize}
\end{defin}
Just as in the $V$-case, we define an order-enriched category $\Dist{(L,V)}$, where the composition is the one defined in $\Mat{(L,V)}$.
\begin{re}\label{mate}
	As in the $V$-case, one can prove
	$$\Dist{(L,V)}(X,Y) \simeq \Multi{V}(X^{\op} \boxtimes Y,V)\simeq \Multi{V}(Y,\mathbb{D}_L(X)).$$
	In particular, to every $(L,V)$-distributor $j : X \kmodto Y$ we can associate its \textit{mate}
	$$ \ulcorner j \urcorner : Y \rightarrow \mathbb{D}(X), \ \ y \mapsto j(-,y).$$
\end{re}
Just as in the $V$-case, for an $(L,V)\mbox{-}$functor $f: (X,a) \rightarrow (Y,b)$, we have two associated adjoint $(L,V)$-distributors:
\begin{enumerate}
	\item $f_{\circledast} : X \kmodto Y, \ \ f_{\circledast}(\underline{x},y) = b(Lf(\underline{x}), y);$
	\item $f^{\circledast} : Y \kmodto X, \ \ f^{\circledast}(\underline{y},x) = b(\underline{y},f(x)).$
\end{enumerate}
In this way we have two $2$-functors
$$ (-)_{\circledast} : \Multi{V}^{\mathtt{co}} \rightarrow \Dist{(L,V)}, \ \  (- )^{\circledast}:  \Multi{V} \rightarrow \Dist{(L,V)}^{\op}.$$
\begin{prop}
  The $2$-functor $(- )^{\circledast}:  \Multi{V} \rightarrow \Dist{(L,V)}^{\op}$ is left adjoint to the $2$-functor
  \[
  \begin{tikzcd}
  	\Dist{(L,V)}^{\op} \ar[r, "\mathbb{D}_L(-)"] & \Multi{V}, \mbox{ } Y \kmodto X \mapsto - \ldist j : \mathbb{D}_L(X) \rightarrow \mathbb{D}_L(Y).
  \end{tikzcd}
  \]
\end{prop}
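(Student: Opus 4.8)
The plan is to follow the proof of Proposition~\ref{DistAdj} verbatim, transported along the dictionary established so far: the asserted adjunction $(-)^{\circledast}\dashv\mathbb{D}_L(-)$ is precisely the naturality, in both variables, of the mate correspondence recorded in Remark~\ref{mate}. Since $(-)^{\circledast}$ is the identity on objects, applying that remark (with the names of its two variables interchanged) yields, for every $(L,V)$-category $X$ and every object $Z$ of $\Dist{(L,V)}^{\op}$, an order-isomorphism
\[
\Dist{(L,V)}^{\op}(X,Z)=\Dist{(L,V)}(Z,X)\;\simeq\;\Multi{V}(X,\mathbb{D}_L(Z)),\qquad j\longmapsto\ulcorner j\urcorner,
\]
where $\ulcorner j\urcorner(x)=j(-,x)$. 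This is exactly the hom-poset bijection required to exhibit the adjunction, so the whole content is to verify that it is $2$-natural.

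Second, I would check the two naturality squares, from which the unit and counit are then forced. Naturality in the variable $X\in\Multi{V}$ asks that, for an $(L,V)$-functor $g\colon X'\to X$, precomposition with the distributor $g^{\circledast}$ on the left-hand side match precomposition with $g$ on the right-hand side; concretely this is the identity $\ulcorner g^{\circledast}\ldist j\urcorner=\ulcorner j\urcorner\cdot g$, which one unwinds from $g^{\circledast}(\underline{x},x')=a(\underline{x},g(x'))$ and the definition of $\ldist$. Naturality in the variable $Z\in\Dist{(L,V)}^{\op}$ asks that postcomposition with a distributor $h$ correspond to applying $\mathbb{D}_L(h)=-\ldist h$, which is just the associativity of the Kleisli composition $\ldist$. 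One reads off that the unit is the Yoneda embedding $\Yo{X}\colon X\to\mathbb{D}_L(X)$ of Remark~\ref{MultiYoneda} (the mate of the identity distributor $a\colon X\kmodto X$) and that the counit at $Z$ is the evaluation distributor $\epsilon_Z(\underline{z},g)=g(\underline{z})$ (the mate of $\Id_{\mathbb{D}_L(Z)}$); the triangle identities then reduce, after unwinding, to the Yoneda Lemma $\mathbb{D}_L(X)[L\Yo{X}(\underline{x}),g]=g(\underline{x})$.

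The main obstacle is purely computational and stems from the Kleisli-style composition $s\ldist r=s\dist Lr\dist m_X^{\circ}$ together with the non-symmetric shape of $(L,V)$-relations. Unlike the $V$-case, a composite such as $g^{\circledast}\ldist j$ entangles the extension $L$ of $V$-relations with the concatenation map $m_X$ hidden in the definition of $\ldist$, so each identity above must be checked against the explicit formula defining the structure $\mathbb{D}_L(X)[-,=]$ and, in particular, against the caveat of Remark~\ref{K} that $s\ldist(=)$ need not preserve suprema. Before carrying this out one should also confirm the two points that the statement takes for granted: that $-\ldist j$ is genuinely an $(L,V)$-functor $\mathbb{D}_L(X)\to\mathbb{D}_L(Y)$, and that $\mathbb{D}_L(-)$ is a $2$-functor on $\Dist{(L,V)}^{\op}$; both follow from the distributor axioms and the functoriality of $L$ on $\Mat{V}$, but they deserve an explicit verification since they are what makes the target $2$-functor well defined.
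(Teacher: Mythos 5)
Your proposal is correct and matches the argument the paper intends: the paper states this proposition without proof, as the $(L,V)$-analogue of Proposition~\ref{DistAdj}, relying exactly on the ingredients you use --- the mate correspondence of Remark~\ref{mate} as the hom-poset isomorphism $\Dist{(L,V)}(Z,X)\simeq\Multi{V}(X,\mathbb{D}_L(Z))$, with unit the Yoneda embedding and counit the evaluation distributor $(\Yo{Z})_{\circledast}$, and the triangle identities discharged by the Yoneda lemma of Remark~\ref{MultiYoneda}. Your identification of these data is consistent with the monad the paper then reads off from the adjunction (unit $\Yo{X}$, multiplication $-\ldist(\Yo{X})_{\circledast}$), so nothing is missing beyond the routine verifications you already flag.
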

The $2$-monad induced by this $2$-adjunction has as underlying $2$-functor
$$\mathbb{D}_L(-): \Multi{V} \rightarrow \Multi{V}, \quad f : X \rightarrow Y \mapsto \mathbb{D}_L(f) := - \ldist f^{\circledast} :\mathbb{D}_L(X) \rightarrow \mathbb{D}_L(Y).$$
It has as unit at $X$,
$$  \Yo{X} : X \rightarrow \mathbb{D}_L(X),$$
and as multiplication
$$  - \ldist (\Yo{X})_{\circledast} : \mathbb{D}_L(X)^2 \rightarrow \mathbb{D}_L(X).$$
In particular, as in the $V$-case, from the Yoneda lemma it follows that $(\mathbb{D}_L(-),\Yo{X},- \ldist (\Yo{X})_{\circledast})$ is of Kock–Zöberlein type.\par\medskip
Similarly to what happens in the $V$-case, one can prove that (pseudo)-algebras for the monad $\mathbb{D}_L$ are exactly cocomplete categories (see \cite{HOFMANN2011283} for more details). Moreover, since $(\mathbb{D}_L(-),\Yo{-},- \ldist (\Yo{-})_{\circledast})$ is of Kock–Zöberlein type, we have the analogon of Theorem \ref{Coco}(see \cite{HOFMANN2011283}).
\begin{teorema}
	Let $(X,a)$ be an $(L,V)$-category. The following are equivalent:
  \begin{itemize}
    \item $(X,a)$ is a cocomplete $(L,V)$-category;
    \item There exists an $(L,V)$-functor
  	$$ \Sup_X : \mathbb{D}_L(X) \rightarrow X,$$
    such that, for every $x\in X$, $\Sup_X(\Yo{X}(x))\simeq x;$
    \item $(X,a)$ is pseudo-injective with respect to fully faithful $(L,V)$-functors. That is to say, for every $(L,V)$-functor $f: (Y,b) \rightarrow (X,a)$ and for every fully faithful $(L,V)$-functor $i : (Y,b) \rightarrow (Z,c)$, there exists an extension $f': (Z,c) \rightarrow (X,a)$ such that $f' \cdot i \simeq f$.
  \end{itemize}
\end{teorema}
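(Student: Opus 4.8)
The plan is to transport the argument behind Theorem~\ref{Coco} to the $(L,V)$-setting, leaning on the two facts already in hand: that $(\mathbb{D}_L(-),\Yo{-},- \ldist (\Yo{-})_{\circledast})$ is of Kock–Zöberlein type, and that its (pseudo-)algebras are precisely the cocomplete $(L,V)$-categories.

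First I would settle the equivalence of the first two items. Since cocompleteness is by definition the existence of a pseudo-algebra structure and the monad is lax-idempotent, such a structure on $(X,a)$ is the same datum as a left adjoint to the unit $\Yo{X}$ with invertible counit; moreover, exactly as in the remark following Theorem~\ref{Coco}, any $(L,V)$-functor $\Sup_X : \mathbb{D}_L(X) \rightarrow X$ satisfying $\Sup_X \cdot \Yo{X} \simeq \mathrm{id}_X$ is automatically such a left adjoint and hence endows $(X,a)$ with a pseudo-algebra structure. This yields (first)$\iff$(second) by invoking the Kock–Zöberlein machinery, with no genuine computation.

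For (second)$\Rightarrow$(third) I would write the extension down explicitly. Given a fully faithful $i : (Y,b) \rightarrow (Z,c)$ and an arbitrary $(L,V)$-functor $f : (Y,b) \rightarrow (X,a)$, I set
$$ f' := \Sup_X \cdot \mathbb{D}_L(f) \cdot (- \ldist i_{\circledast}) \cdot \Yo{Z} : (Z,c) \rightarrow (X,a).$$
To verify $f' \cdot i \simeq f$ I would use the naturality of the Yoneda embedding, in the form $\Yo{Z}\cdot i \simeq \mathbb{D}_L(i)\cdot \Yo{Y}$ and $\mathbb{D}_L(f)\cdot \Yo{Y} \simeq \Yo{X}\cdot f$, together with the full-faithfulness identity $i^{\circledast} \ldist i_{\circledast} = b$. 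Since $\mathbb{D}_L(i) = - \ldist i^{\circledast}$, the two restriction functors compose as $(- \ldist i_{\circledast})\cdot(- \ldist i^{\circledast}) = - \ldist (i^{\circledast}\ldist i_{\circledast}) = - \ldist b = \mathrm{id}$, so that $f'\cdot i$ collapses to $\Sup_X \cdot \Yo{X}\cdot f \simeq f$. The implication (third)$\Rightarrow$(second) is then immediate: because $\Yo{X} : X \rightarrow \mathbb{D}_L(X)$ is fully faithful (Remark~\ref{MultiYoneda}), pseudo-injectivity applied to $i = \Yo{X}$ and $f = \mathrm{id}_X$ yields an extension $\Sup_X : \mathbb{D}_L(X) \rightarrow X$ with $\Sup_X \cdot \Yo{X} \simeq \mathrm{id}_X$, which is the required map.

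The hard part will be the bookkeeping hidden in the second implication. Because the Kleisli-style composition $\ldist$ is not symmetric and, by Remark~\ref{K}, is only suprema-preserving on the left, I would have to check carefully that the presheaf functors compose as stated and that full faithfulness of $i$ truly produces the equality $i^{\circledast}\ldist i_{\circledast}=b$ rather than a mere inequality. Establishing the naturality squares for $\Yo{}$ in the $(L,V)$-setting is the other place demanding care; once these are in place, the rest is a faithful transcription of the $V$-case and of the treatment in \cite{HOFMANN2011283}.
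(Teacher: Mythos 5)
Your proposal is correct, and it is essentially the proof the paper has in mind: the paper itself offers no argument for this theorem, deferring to the Kock--Zöberlein machinery and to \cite{HOFMANN2011283}, and your proof is exactly the standard unpacking of that outline --- the KZ property makes any retraction (up to isomorphism) of the unit $\Yo{X}$ automatically a left adjoint, pseudo-algebras are the cocomplete $(L,V)$-categories, and injectivity follows from the explicit extension $f' = \Sup_X \cdot \mathbb{D}_L(f)\cdot(- \ldist i_{\circledast})\cdot \Yo{Z}$ together with $i^{\circledast}\ldist i_{\circledast}=b$ for fully faithful $i$. The points you flag for verification all do hold: $\ldist$ is associative in $\Dist{(L,V)}$, full faithfulness gives the equality $i^{\circledast}\ldist i_{\circledast}=b$ (not just an inequality, by the same computation as in the $V$-case), and the naturality squares for $\Yo{}$ are automatic since $\Yo{}$ is the unit of the $2$-monad, so the argument goes through as written.
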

\subsection{Monadicity over $\Sets$}\label{CocoMonad}
As in the $V$-case, we restrict ourself to consider only separated $(L,V)$-categories. An $(L,V)$-category $(X,a)$ is called separated (see \cite{HT10}) whenever $f \simeq g$ implies $f = g$, for all $(L,V)$-functors of the form ${f,g : (Y,b) \rightarrow (X,a)}$. We have the analogue of Theorem \ref{CocoSep}.
\begin{teorema}
	Let $(X,a)$ be an $(L,V)$-category. The following are equivalent:
  \begin{itemize}
    \item $(X,a)$ is a separated cocomplete $(L,V)$-category;
    \item There exists an $(L,V)$-functor
  	$$ \Sup_X : \mathbb{D}_L(X) \rightarrow X,$$
    such that, for every $x\in X$, $\Sup_X(\Yo{X}(x)) =  x;$
    \item $(X,a)$ is injective with respect to fully faithful $(L,V)$-functors. That is to say, for every $(L,V)$-functor $f: (Y,b) \rightarrow (X,a)$ and for every fully faithful $(L,V)$-functor $i : (Y,b) \rightarrow (Z,c)$, there exists an extension $f': (Z,c) \rightarrow (X,a)$ such that $f' \cdot i = f$.
  \end{itemize}
\end{teorema}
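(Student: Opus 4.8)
The plan is to prove the cycle of implications $(1)\Rightarrow(2)\Rightarrow(3)\Rightarrow(1)$, mirroring the proof of Theorem \ref{CocoSep} in the $V$-case and leaning on the fact that $(\mathbb{D}_L(-),\Yo{-},- \ldist (\Yo{-})_{\circledast})$ is of Kock–Zöberlein type, together with the preceding (pseudo) characterization of cocomplete $(L,V)$-categories. The guiding principle is that each pseudo-condition upgrades to its strict counterpart precisely because $X$ is separated, and because presheaf $(L,V)$-categories are themselves separated. For $(1)\Rightarrow(2)$, if $(X,a)$ is separated and cocomplete, the (pseudo) theorem yields an $(L,V)$-functor $\Sup_X : \mathbb{D}_L(X)\to X$ with $\Sup_X\cdot\Yo{X}\simeq\Id$; applying separatedness of $X$ to the parallel pair $\Sup_X\cdot\Yo{X},\,\Id\colon X\to X$ turns $\simeq$ into $=$, giving (2).

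For the crucial step $(2)\Rightarrow(3)$ I would produce the extension explicitly rather than appeal to abstract injectivity. Given a fully faithful $i\colon(Y,b)\to(Z,c)$ and an arbitrary $f\colon(Y,b)\to(X,a)$, set
$$ f' \;=\; \Sup_X\cdot\mathbb{D}_L(f)\cdot\ulcorner i_{\circledast}\urcorner\;:\;Z\longrightarrow X, $$
a composite of $(L,V)$-functors by Remark \ref{mate}. To check $f'\cdot i=f$, I would first use full faithfulness of $i$: since $c(Li(\underline{y}),i(y))=b(\underline{y},y)$, the presheaf $\ulcorner i_{\circledast}\urcorner(i(y))=c(Li(-),i(y))$ equals $b(-,y)=\Yo{Y}(y)$, so $\ulcorner i_{\circledast}\urcorner\cdot i=\Yo{Y}$. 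Naturality of the unit then gives $\mathbb{D}_L(f)\cdot\Yo{Y}=\Yo{X}\cdot f$, and finally the strict equation of (2) yields $f'(i(y))=\Sup_X(\Yo{X}(f(y)))=f(y)$. Hence $f'$ is the required strict extension.

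For $(3)\Rightarrow(1)$, strict injectivity trivially implies pseudo-injectivity, so $(X,a)$ is cocomplete by the (pseudo) theorem. To see that $X$ is separated, apply injectivity to $f=\Id_X$ along the fully faithful $\Yo{X}\colon X\to\mathbb{D}_L(X)$, obtaining $\Sup_X$ with $\Sup_X\cdot\Yo{X}=\Id$. Now if $g\simeq h\colon W\to X$, then $\Yo{X}\cdot g\simeq\Yo{X}\cdot h$ by local monotonicity of composition; since $\mathbb{D}_L(X)$ is separated (its $(L,V)$-structure is computed pointwise as an infimum of internal homs into the separated object $V$), this forces $\Yo{X}\cdot g=\Yo{X}\cdot h$, and postcomposing with $\Sup_X$ gives $g=h$. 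Thus $(X,a)$ is separated cocomplete, closing the cycle.

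The main obstacle is not any single implication---each is formally parallel to the $V$-case---but rather the bookkeeping forced by the asymmetry of $\ldist$ (Remark \ref{K}) and the failure of $X\boxtimes E\simeq X$: one must make sure the mate correspondence of Remark \ref{mate}, the naturality of $\Yo{-}$, and the Kock–Zöberlein consequence that $\Sup_X$ is automatically left adjoint to $\Yo{X}$ all survive in the $(L,V)$-setting. The one point that deserves a separate, short verification is the separatedness of $\mathbb{D}_L(X)$, which underwrites the separatedness transfer in $(3)\Rightarrow(1)$; everything else reduces to the already-established Kock–Zöberlein machinery.
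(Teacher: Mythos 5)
Your proof is correct. A point of comparison worth making explicit: the paper itself contains no proof of this theorem---it is stated as the analogue of Theorem \ref{CocoSep}, with the pseudo version and the Kock--Z\"oberlein machinery deferred to \cite{HOFMANN2011283}---so your cycle of implications is precisely the self-contained argument that the citation gestures at. Each step checks out against the paper's conventions. In $(1)\Rightarrow(2)$, the pointwise isomorphisms $\Sup_X(\Yo{X}(x))\simeq x$ are equivalent to $\Sup_X\cdot\Yo{X}\simeq\Id_X$ in the hom-order of $\Multi{V}$, so separatedness of $X$ upgrades this to equality. In $(2)\Rightarrow(3)$, your extension $f'=\Sup_X\cdot\mathbb{D}_L(f)\cdot\ulcorner i_{\circledast}\urcorner$ is a composite of $(L,V)$-functors by Remark \ref{mate}, and the three identities you use---$\ulcorner i_{\circledast}\urcorner\cdot i=\Yo{Y}$ from full faithfulness of $i$, $\mathbb{D}_L(f)\cdot\Yo{Y}=\Yo{X}\cdot f$ from strict naturality of the unit of the $2$-monad, and $\Sup_X\cdot\Yo{X}=\Id_X$ from $(2)$---do give the strict equality $f'\cdot i=f$. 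In $(3)\Rightarrow(1)$, cocompleteness follows from pseudo-injectivity, and your separatedness transfer is sound: specializing the formula for $\mathbb{D}_L(X)[-,=]$ to singleton lists shows that $g\leq h$ in the underlying order of $\mathbb{D}_L(X)$ iff $g(\underline{x})\leq h(\underline{x})$ in $V$ for all $\underline{x}\in LX$, so antisymmetry of $V$ makes $\mathbb{D}_L(X)$ separated, and postcomposition with the $\Sup_X$ obtained by extending $\Id_X$ along the fully faithful $\Yo{X}$ (Remark \ref{MultiYoneda}) finishes the argument. One cosmetic remark: the appeal to the Kock--Z\"oberlein consequence that $\Sup_X$ is left adjoint to $\Yo{X}$ is never actually used in your argument and can be dropped; everything rests only on the pseudo theorem, Remarks \ref{mate} and \ref{MultiYoneda}, and the antisymmetry of the order of $V$.
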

One can prove that the forgetful functor $\Multi{V} \rightarrow \Sets$ has a left adjoint given by
$$ d : \Sets \rightarrow \Multi{V}, \mbox{  } X \mapsto (X, e^{\circ}_X).$$
Thus, the forgetful functor $$\Coco{\Multi{V}_{\mathtt{sep}}} \rightarrow \Sets,$$
where $\Coco{\Multi{V}_{\mathtt{sep}}}$ denotes the ($2$-)category formed by cocomplete separated $(L,V)$-category and cocontinuous $(L,V)$-functors among them, has a left adjoint which is given by the composite
$$
	 \Sets \xrightarrow{d}  \Multi{V}_{\mathtt{sep}} \xrightarrow{\mathbb{D}_L(-)}  \Coco{\Multi{V}_{\mathtt{sep}}}.
$$
As in the $V$-case we have (see Theorem $2.23$ of \cite{HOFMANN2011283}) that this functor is monadic.
\begin{teorema}
The forgetful functor $G: \Coco{\Multi{V}_{\mathtt{sep}}} \rightarrow \Sets$ is monadic.
\end{teorema}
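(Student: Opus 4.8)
The plan is to apply Duskin's monadicity criterion, exactly as in the proof of Theorem \ref{SetMon}. The adjunction giving rise to $G$ has already been exhibited: its left adjoint is the composite $\mathbb{D}_L(-) \circ d$. It therefore suffices to verify that $G$ reflects isomorphisms and that $\Coco{\Multi{V}_{\mathtt{sep}}}$ has, and $G$ preserves, coequalizers of $G$-equivalence relations.

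For the first condition, let $f : (X,a) \to (Y,b)$ be an arrow of $\Coco{\Multi{V}_{\mathtt{sep}}}$ whose underlying function is a bijection, with set-theoretic inverse $g$. Writing $- \ldist f_{\circledast} : \mathbb{D}_L(Y) \to \mathbb{D}_L(X)$ for the right adjoint of $\mathbb{D}_L(f)$, I would check that $g = \Sup_X \cdot (- \ldist f_{\circledast}) \cdot \Yo{Y}$. Since every factor is an $(L,V)$-functor, so is $g$, and hence $f$ is already an isomorphism in $\Coco{\Multi{V}_{\mathtt{sep}}}$.

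For the second condition, let $R \rightrightarrows X$ be a $G$-equivalence relation. First I would form the quotient $(Q = X/R, a_R)$ in $\Multi{V}_{\mathtt{sep}}$, where $a_R(L\pi(\underline{x}), \pi(y)) = a(\underline{x}, y)$, verify that this is well defined (using that $R$ is a congruence) and that $(Q, a_R)$ is the coequalizer of $R \rightrightarrows X$ in $\Multi{V}_{\mathtt{sep}}$. Applying $\mathbb{D}_L(-)$, which preserves colimits as a left adjoint, produces a coequalizer
\[
\begin{tikzcd}[column sep=large]
	\mathbb{D}_L(R) \arrow[r, "\mathbb{D}_L(\pi_1)", shift left] \arrow[r, "\mathbb{D}_L(\pi_2)"', shift right] & \mathbb{D}_L(X) \arrow[l, "- \ldist \pi_{1\circledast}"', bend right] \arrow[r, "\mathbb{D}_L(\pi)"] & \mathbb{D}_L(Q) \arrow[l, "- \ldist \pi_{\circledast}"', bend right]
\end{tikzcd}
\]
in $\Coco{\Multi{V}_{\mathtt{sep}}}$; the indicated splittings $- \ldist \pi_{1\circledast}$ and $- \ldist \pi_{\circledast}$ make it a split, hence absolute, coequalizer, so it remains one in $\Multi{V}_{\mathtt{sep}}$. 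The universal property then yields a unique structure map $\mathbb{D}_L(Q) \to Q$, exhibiting $(Q, a_R)$ as cocomplete, and a diagram chase—transporting any map $h$ coequalizing the pair through the induced arrow on $\mathbb{D}_L(Q)$ and precomposing with $\Yo{Q}$—shows that $(Q,a_R)$ is the coequalizer in $\Coco{\Multi{V}_{\mathtt{sep}}}$, preserved by $G$.

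The step I expect to be the main obstacle is controlling the Kleisli-style composition $\ldist$: by Remark \ref{K}, $- \ldist r$ preserves suprema but $s \ldist (=)$ does not, so I must check that the proposed splittings really are $(L,V)$-functors and that they interact correctly with the multiplication $m_X$ of the list monad hidden inside $\ldist$. A related subtlety is that, unlike the $V$-case, $X \boxtimes E \not\simeq X$, so no reduction to single-variable arguments is automatic; in particular the well-definedness of $a_R$ must be established at the level of classes of lists $L\pi(\underline{x})$, which is precisely where the list monad $L$ enters nontrivially.
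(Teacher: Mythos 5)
Your route differs from the paper's: for this statement the paper proves nothing at all, but simply invokes Theorem 2.23 of \cite{HOFMANN2011283}; what you propose is the self-contained alternative of transporting the Duskin-style argument of Theorem \ref{SetMon} to the $(L,V)$-setting, which is exactly what the paper's phrase ``as in the $V$-case'' gestures at. The skeleton is sound, and the isomorphism-reflection half is correct as stated: $-\ldist f_{\circledast}$ is indeed right adjoint to $\mathbb{D}_L(f)$, and combining cocontinuity of $f$ with surjectivity of $f$ (which gives $f_{\circledast} \ldist f^{\circledast} = b$) yields $f\bigl(\Sup_X(\Yo{Y}(y)\ldist f_{\circledast})\bigr) = \Sup_Y(\Yo{Y}(y)) = y$, hence $g = \Sup_X \cdot (-\ldist f_{\circledast}) \cdot \Yo{Y}$. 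Moreover, the two worries you flag at the end are not real obstacles: every map you need ($\mathbb{D}_L(\pi)$, $-\ldist \pi_{\circledast}$, $-\ldist \pi_{1\circledast}$) is of the form $-\ldist j$ for an $(L,V)$-distributor $j$, i.e.\ a value of the $2$-functor $\mathbb{D}_L(-) : \Dist{(L,V)}^{\op} \rightarrow \Multi{V}$, so it is automatically an $(L,V)$-functor and the badly-behaved variance $s \ldist (=)$ of Remark \ref{K} never occurs; likewise the failure of $X \boxtimes E \simeq X$ plays no role anywhere in the argument.

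The genuine gap is in the quotient step. As literally written, $a_R(L\pi(\underline{x}),\pi(y)) = a(\underline{x},y)$ is not well defined, and---contrary to what you assert---the congruence property of $R$ does not make it so. A counterexample exists already for $V=\mathbf{2}$, where by Corollary \ref{cor} objects of $\Coco{\Multi{\mathbf{2}}_{\mathtt{sep}}}$ are quantales: let $X$ be the chain $0 < m < 1$ with $\otimes = \wedge$, and let $R = \Delta_X \cup \{(m,1),(1,m)\}$. This is a subquantale of $X \times X$ whose underlying relation is an equivalence relation and whose projections are quantale morphisms, hence a $G$-equivalence relation; yet the one-element lists $(1)$ and $(m)$ have the same image under $L\pi$, while $a((1),m)$ fails (since $1 \le m$ fails) and $a((m),m)$ holds. (Read literally, the paper's own sketch of Theorem \ref{SetMon}, with $a_R(\overline{x},\overline{y}) = a(x,y)$, has the same defect, so you have faithfully reproduced a gap rather than created one; but it must be repaired for the proof to stand.) The repair is to take instead the image structure $a_R = \pi \dist a \dist (L\pi)^{\circ}$, elementwise $a_R(\underline{q},q') = \bigvee\{a(\underline{x},y) \mid L\pi(\underline{x}) = \underline{q},\ \pi(y)=q'\}$; it is here---in proving that this relation is reflexive, transitive and separated, i.e.\ that $(Q,a_R)$ really is the coequalizer in $\Multi{V}_{\mathtt{sep}}$ and is computed on the set-level quotient---that one uses the closure of $R$ under the relevant operations, which follows from $R$ being an object of $\Coco{\Multi{V}_{\mathtt{sep}}}$ with jointly monic cocontinuous projections. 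With that correction, the remainder of your argument (applying the left adjoint $\mathbb{D}_L(-)$, the splitting, the induced structure map $\mathbb{D}_L(Q) \rightarrow Q$, and the universal property checked via precomposition with $\Yo{Q}$) goes through at the same level of rigour as the paper's $V$-case sketch.
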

Define $\Pow{L} =  \mathbb{D}_L \cdot d$. Then the monad which arises from the previous theorem is the monad $(\Pow{L}, e, n)$ whose unit and multiplication, at a set $X$, are given by
$$ e_X : X \rightarrow \Pow{L}(X), \mbox{ } x \mapsto \Yo{X}(x) = e^{\circ}_X(-,x),$$
$$ n_X =  - \ldist (\Yo{X})_{\circledast} : \Pow{L}\Pow{L}(X) \rightarrow \Pow{L}(X).$$
In Section \ref{InjMon} we will study better this monad and its algebras.
\section{Refining $\Mod \simeq \Alg{V}$}
In Section \ref{ModAlg} we proved that there exists an equivalence
$$\Mod \simeq \Alg{V}.$$
In this section, first we prove that both categories can be equipped with a monoidal structure, and then we prove that the aforementioned equivalence extends to the corresponding categories of monoids.\par \medskip
In Proposition \ref{StrongPow} we prove that $\Alg{V}$ admits a closed symmetric monoidal structure $(\Tensor{\Pow{V}},V)$ for which the free functor
$$\Pow{V} : \Sets \rightarrow \Alg{V}$$
becomes strong monoidal. This monoidal structure comes from the fact that the $V$-powerset monad $(\Pow{V}, u, n)$ is a strong commutative monad. Moreover, this monoidal structure has another interesting property: it classifies bimorphisms. This means that there exists a natural isomorphism (for all $(X,\alpha)$, $(Y, \beta)$, $(Z,\theta) \in \Alg{V}$)
$$\Bim_{\Alg{V}}( X \times Y, Z ) \simeq \Alg{V}(X \Tensor{\Pow{V}} Y, Z),$$
where
$$\Bim_{\Alg{V}}( X \times Y, = ) : \Alg{V} \rightarrow \Sets$$
is the functor that sends an algebra $(Z,\theta)$, to the set of bimorphisms of the form
$f : X \times Y \rightarrow Z$. Here a function $f : X \times Y \rightarrow Z,$ is a bimorphism if, for all $x \in X$, $y\in Y$,
$$f_x : Y \rightarrow Z, \mbox{ } y\mapsto f(x,y) \mbox{ and }f_y : X \rightarrow Z, \mbox{ } x\mapsto f(x,y)$$
are morphisms in $\Alg{V}$ (see Definition \ref{BimDef2} and Proposition \ref{EquiBimo}).
\begin{re}
	Notice that in the case in which $V = \mathbf{2}$ one obtains the well known monoidal structure on suplattices. See \cite{EGTG} for its description.
\end{re}

In \cite{EGTG}, Joyal and Tierney defined the tensor product $X \Tensor{V} Y$ of $V$-modules $(X, \leq_X, \rho)$ and $(Y, \leq_Y, \tau)$ as the coequalizer of

\[
\xymatrix@R+1pc@C+2pc{
	V \Tensor{\mathbf{2}} X \Tensor{\mathbf{2}} Y \ar@<-.5ex>[r]_{\rho \Tensor{\mathbf{2}} \Id} \ar@<.5ex>[r]^{(\Id \Tensor{\mathbf{2}} \gamma) \cdot \tau_{V,X}} & X \Tensor{\mathbf{2}} Y. & \mbox{ (where $\tau_{V,M} : V \Tensor{\mathbf{2}}  X \simeq X \Tensor{\mathbf{2}} V$)}
}
\]
With this tensor product one can prove that $\Mod$ becomes a symmetric closed monoidal category. Moreover, $\Tensor{V}$ classifies bimorphisms, where a bimorphism $f : X \times Y \rightarrow Z$ between $V$-modules is function such that, for all $x \in X$, $y\in Y$,
$$f_x : Y \rightarrow Z, \mbox{ } y\mapsto f(x,y) \mbox{ and }f_y : X \rightarrow Z, \mbox{ } x\mapsto f(x,y)$$
are morphisms in $\Mod$.\par \medskip

Let $f : X \times Y \rightarrow Z,$ be a bimorphism between $V$-modules. Since for all $x \in X$, $y\in Y$,
$$f_x : Y \rightarrow Z, \mbox{ } y \mapsto f(x,y) \mbox{ and }f_y : X \rightarrow Z, \mbox{ } x\mapsto f(x,y)$$
are morphisms in $\Mod$, by applying the forgetful functor $\Mod \rightarrow \mathtt{Sup}$, we get two morphisms in $\mathtt{Sup}$. Since the monoidal structure on $\mathtt{Sup}$ classifies bimorphisms too, we get a unique morphism $\overline{f}: X \Tensor{\mathbf{2}} Y \rightarrow Z$ in $\mathtt{Sup}$ that makes the following diagram commutes
\[
\begin{tikzcd}
	X \times Y \ar[r] \ar[rd, "f",swap] & X \Tensor{\mathbf{2}} Y \ar[d, "\overline{f}"] \\
	& Z.
\end{tikzcd}
\]
This shows that we can define a bimorphism in $\Mod$ in two equivalent ways:
\begin{itemize}
	\item As a function $f : X \times Y \rightarrow Z,$ such that $f$ is a morphism in $\Mod$ in each variable separately;
	\item A suprema preserving map $\overline{f}: X \Tensor{\mathbf{2}} Y \rightarrow Z$ such that its associate arrow $f : X \boxtimes Y \rightarrow Z,$ is equivariant in each variable separately.
\end{itemize}
In other words, we have the following natual bijections:
$$\Bim_{\Mod}(X \times Y, Z ) \simeq \Bim_{\Mod}(X \Tensor{\mathbf{2}} Y, Z ) \simeq \Mod(X \Tensor{V} Y, Z).$$
%\begin{re}(Metterlo prima???)
%	Notice that in the case $V= 2$, being $2$ the unit of the monoidal structure on sup-lattices, one has that:
%	$$ X \Tensor{2} Y \simeq X \Tensor{V} Y.$$
%\end{re}
\begin{re}\label{StrongMod}
It is possible to show that a strong commutative monad $T$ induces the monoidal structure on $\Mod$ we have described. This monad is the monad introduced in Section \ref{MonadSup}: the monad ${T =  (V \Tensor{\mathbf{2}} =, \eta, \mu)}$. \\
We define our wannabe strenght as
$$
t_{X,Y} :  X \Tensor{\mathbf{2}} TY \xrightarrow{\tau_{X, V}\Tensor{\mathbf{2}} \Id}  T(X \Tensor{\mathbf{2}} Y),
$$
from which we can deduce that the co-strenght is given by
$$
	t'_{M,N} : TX \Tensor{\mathbf{2}} Y \xrightarrow{\tau_{TX, Y} } Y \Tensor{\mathbf{2}} TX \xrightarrow{\tau_{X, V} \Tensor{\mathbf{2}} \tau_{Y, X}}  T(X \Tensor{\mathbf{2}} Y).
	$$
It is pretty straightforward to show that $T$ is a strong monad; moreover, because $V$ is a commutative quantale, the fact that $T$ is commutative follows directly.
\end{re}
\begin{re}
	Notice that from Remark \ref{Unitor}, it follows that the left unitor $ l_M : V \Tensor{V} X \simeq X$, of the monoidal structure on $\Mod$, is the unique morphism associate to the bimorphism $\rho : V \Tensor{\mathbf{2}} X \rightarrow X $, where $(X, \leq_X, \rho)$ is in $\Mod$.
\end{re}
%With the aid of this easy lemma we can finally prove the main theorem of this section: \scomment{Do I really have to put this???}
%\begin{lem}
%Let $F: C \rightarrow D$ be an equivalence of category such that for $c \in C$, $d \in D$:
%$$ D(Fc,F=') \simeq D(d,=).$$
%Then $Fc \simeq d.$
%\end{lem}
\begin{prop}\label{MonBim}
The equivalence
$$\Mod \simeq \Alg{V}$$
extends to an equivalence beetween the corresponding category of monoids
$$\Mon{\Mod}{V} \simeq \Mon{\Alg{V}}{\Pow{V}}.$$
\end{prop}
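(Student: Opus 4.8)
The plan is to upgrade the equivalence $\Mod \simeq \Alg{V}$ to a \emph{strong monoidal} equivalence with respect to the two tensor products $\Tensor{V}$ and $\Tensor{\Pow{V}}$ (both with unit object $V$), and then to invoke the standard fact that a strong monoidal equivalence of monoidal categories restricts to an equivalence of the associated categories of monoids. Write $\Phi : \Mod \to \Alg{V}$ for the equivalence and recall that, on underlying sets, $\Phi$ acts as the identity, sending a $V$-module $(X,\leq_X,\rho)$ to the algebra carried by the same set $X$; in particular $\Phi(V) \simeq V$, matching the two units.

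First I would verify that $\Phi$ is compatible with the two tensor products, exploiting that both $\Tensor{V}$ and $\Tensor{\Pow{V}}$ are characterized by classifying bimorphisms:
\[
\Mod(X \Tensor{V} Y, Z) \simeq \Bim_{\Mod}(X \times Y, Z), \qquad \Alg{V}(\Phi X \Tensor{\Pow{V}} \Phi Y, \Phi Z) \simeq \Bim_{\Alg{V}}(\Phi X \times \Phi Y, \Phi Z).
\]
Since $\Phi$ is a bijection on underlying sets and carries a morphism of $V$-modules to precisely an algebra homomorphism, a function $X \times Y \to Z$ is a morphism in each variable separately in $\Mod$ exactly when the corresponding function is so in $\Alg{V}$; hence the two notions of bimorphism coincide, giving a natural isomorphism $\Bim_{\Mod}(X \times Y, Z) \simeq \Bim_{\Alg{V}}(\Phi X \times \Phi Y, \Phi Z)$. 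Chaining this with full faithfulness of $\Phi$ yields, for every $Z$, a natural isomorphism
\[
\Alg{V}(\Phi(X \Tensor{V} Y), \Phi Z) \simeq \Alg{V}(\Phi X \Tensor{\Pow{V}} \Phi Y, \Phi Z),
\]
and the Yoneda lemma together with essential surjectivity of $\Phi$ delivers a natural comparison isomorphism $\phi_{X,Y} : \Phi X \Tensor{\Pow{V}} \Phi Y \xrightarrow{\sim} \Phi(X \Tensor{V} Y)$.

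Next I would check the coherence of these comparison isomorphisms, i.e. the associativity and unit axioms of a strong monoidal functor. Because each $\phi_{X,Y}$ is the \emph{unique} isomorphism induced by the bimorphism classification, the coherence diagrams commute automatically: both composites in each diagram represent the same functor of (multi-)bimorphisms, so uniqueness of the representing isomorphism forces them to agree. This exhibits $\Phi$ as a strong monoidal equivalence.

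Finally, a strong monoidal functor sends monoids to monoids: a monoid $(X, m : X \Tensor{V} X \to X, u : V \to X)$ in $\Mod$ is carried to $(\Phi X,\, \Phi(m) \cdot \phi_{X,X},\, \Phi(u))$ in $\Alg{V}$, functorially in monoid morphisms, and the same construction applied to a quasi-inverse of $\Phi$ produces a functor in the opposite direction whose composites with the first are naturally isomorphic to the identities (the monoidal natural isomorphisms witnessing the equivalence lift to the monoid level). This establishes $\Mon{\Mod}{V} \simeq \Mon{\Alg{V}}{\Pow{V}}$. I expect the main obstacle to be the first step, namely verifying that $\Phi$ genuinely respects the tensor products; this reduces to the observation that, under the identification of underlying sets furnished by $\Phi$, the bimorphisms of $\Mod$ and of $\Alg{V}$ literally coincide, after which the remainder of the argument is formal.
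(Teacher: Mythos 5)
Your proposal is correct and follows essentially the same route as the paper's proof: both exploit that $\Tensor{V}$ and $\Tensor{\Pow{V}}$ classify bimorphisms, observe that the equivalence (being the identity on underlying sets and morphisms) identifies the two notions of bimorphism, deduce the comparison isomorphism $[X]\Tensor{\Pow{V}}[Y]\simeq[X\Tensor{V}Y]$ via Yoneda, and settle coherence by reducing both associators to the cartesian one in $\Sets$ before passing to monoids. The only cosmetic difference is that you phrase the coherence step as uniqueness of representing isomorphisms for multi-bimorphism functors, whereas the paper cites the common origin of the associators in $\Sets$; these amount to the same argument.
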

\begin{proof}
Let us call $[- ] : \Mod \rightarrow \Alg{V}$ the functor which realizes the equivalence $\Mod \simeq \Alg{V}$ which is the composite of $\Mod \simeq \Coco{\Vcats} $ we obtained in Theorem \ref{Rest} with $\Coco{\Vcats} \simeq \Alg{V}$ we described in Subsection \ref{Presheaf}. Let $f: X \times Y \rightarrow Z$ be a bimorphism between $V$-modules. Since equivalences preserve products we have $[X \times Y] \simeq [X]\times [Y]$. Moreover, because for all $x,y \in X,Y$, $f_x, f_y$ are morphisms in $\Mod$, we have $[f_x]$ and $[f_y]$ are morphisms in $\Alg{V}$. From $[X \times Y] \simeq [X]\times [Y]$  it follows that $[f_x] = [f]_x$ and $[f_y] = [f]_y$. This implies that $f$ defines a bimorphism $f : [X]\times [Y] \rightarrow [Z]$ in  $\Alg{V}$. This shows that we have a bijection
$$\Bim_{\Mod}( X \times Y, Z ) \simeq \Bim_{\Alg{V}}([X]\times [Y], [Z])$$
which can be easily seen to be natual (since $[-]$ is a functor and all the "change of base" components are given by pre-post composition).
Now, from
$$\Mod(X \Tensor{V} Y, Z) \simeq \Alg{V}([X \Tensor{V} Y ] ,[Z])$$
and from
\begin{equation} \label{equivalence}
\Bim_{\Mod}( X \times Y, Z ) \simeq \Bim_{\Alg{V}}([X]\times [Y], [Z]),
\end{equation}
it follows that
$$\Alg{V}([X] \Tensor{\Pow{V}} [Y], [Z] ) \simeq \Alg{V}([X \Tensor{V} Y ], [Z]),$$
from which we can deduce that
$$ [X] \Tensor{\Pow{V}} [Y] \simeq [X \Tensor{V} Y ].$$
The compatibility of $[- ]$ and the associators follows from the bijection \ref{equivalence} and from the fact that both associators derive from the associator of the cartesian product $\Sets$.\\
Moreover, since
$[V] \simeq V,$
and the unitors are compatible with $[- ]$, the result follows (see the next remarks for further details).\\
\end{proof}
\begin{re}
	Remember that $V$, as a $\Pow{V}$-algebra, has the structure given by
	$$ n_1 : \Pow{V}(\Pow{V}(1)) \rightarrow \Pow{V}(1), \ \ j \mapsto \bigvee_w j(w)\otimes w,$$
	while the $\Pow{V}$-algebra associated to the cocomplete $V$-category $(V, [-,=])$ is $(V, \alpha)$, where
	$$ \alpha(j)=  \mathtt{Sup}_V(\overline{j}) = \bigvee_w \overline{j}(w)\otimes w, \mbox{ where } \overline{j}(w)=  \bigvee_v [w,v] \otimes j(v).$$
In order to show that $[V] \simeq V$, we need to prove that the two structures are the same, that is to say
	$$\bigvee_w j(w)\otimes w = \bigvee_w (\bigvee_v [w,v] \otimes j(v))\otimes w.$$
	But since $[-,v] = \Yo{V}(v)$, we have $\mathtt{Sup}_V(\Yo{V}(v)) = \bigvee_v [w,v] \otimes v = v$. Thus the result follows.
 \end{re}
 \begin{re}
Remember that the (left) unitor in the monoidal category $\Alg{V}$, at an object $(X,\alpha)$, is the corresponding morphism to the bimorphism
$$\Pow{V}(1) \times X \rightarrow X, \quad (v,x) \mapsto \alpha(v \boxtimes e_X(x)) .$$
While the (left) unitor in the monoidal category $\Mod$, at an object $(X, \rho)$, is the corresponding morphism to the bimorphism
$$V \times X \rightarrow X, \quad (v,x) \mapsto \rho(v,x).$$
The equivalence $[-]$ sends $(X,\rho)$ to the cocomplete $V$-category $(X,a)$ that has $\rho$ as copower. The category $(X,a)$ is then sent to the $\Pow{V}$-algebra $(X,\alpha')$, where $\alpha' = \Sup_X(- \dist a)$. In this way, we get that the unitor
$$V \times X \rightarrow X, \quad (v,x) \mapsto \rho(v,x),$$
becomes the copower of $(X,a)$ which is then sent to the map
$$\Pow{V}(1) \times X \rightarrow X, \quad (v,x) \mapsto v \odot_{\rho} x.$$
In order to conclude, we notice that
\begin{alignat*}{2}
  \Sup_X(v \boxtimes e_X(x) \dist a) &= \Sup_X(v \boxtimes \Yo{X}(x)), \\
  & = v \odot_{\rho} x.
\end{alignat*}
 \end{re}
\section{First Interlude: Algebras and Modules}
In this section we study further the category $\Mon{\Mod}{V}$. In commutative algebra it is well known that monoids in the category of modules over a commutative ring $R$ are (associative and unital) $R$-algebras. In our case the quantale $V$ plays the role of the base ring $R$, thus one might expect that a similar result holds also for $\Mon{\Mod}{V}$. The answer is positive but it requires us to restrict our attention to a particular subcategory of $V \downarrow \mathtt{Quant}$.
\begin{defin}\label{Spades}
	We define $\Quant{Quant}$ to be the full subcategory of the coslice category $V \downarrow \mathtt{Quant}$ whose objects are morphisms of quantales $f : V \rightarrow Q$ such that, for all $v \in V, $ $u \in Q$, $f(v) \mult_Q u = u \mult_Q f(v)$.
\end{defin}
\begin{prop}
	\label{MonModQuant}
	There is an equivalence of categories: $\Mon{\Mod}{V} \simeq \Quant{Quant}.$
\end{prop}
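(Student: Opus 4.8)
The plan is to mimic the classical correspondence between monoids in $R\text{-}\mathtt{Mod}$ and $R$-algebras, replacing the commutative ring $R$ by the commutative quantale $V$ and rings by quantales, exactly as the opening of this section suggests. The whole argument hinges on the fact that $\Tensor{V}$ classifies bimorphisms: a morphism $\mu : X \Tensor{V} X \rightarrow X$ in $\Mod$ is the same datum as a map $\mult_X : X \times X \rightarrow X$ which, in each variable separately, preserves suprema and is $V$-equivariant. I would begin by unwinding a monoid $(X,\mu,u)$ in $(\Mod,\Tensor{V},V)$ through this correspondence.

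\textbf{From a monoid to an object of $\Quant{Quant}$.} Given such a monoid, the bimorphism $\mult_X$ associated to $\mu$ preserves suprema in each variable, so together with the associativity constraint of the monoid and the unit supplied by $u$ it turns the underlying suplattice of $X$ into a quantale $(X,\leq_X,\mult_X,e_X)$ with unit $e_X = u(k)$. I would then check that $u : V \rightarrow X$ is a morphism of quantales: it preserves suprema, being a $\Mod$-morphism, and, writing $u(v)= \rho(v,e_X)$ since $u$ is equivariant, the equivariance of $\mult_X$ together with $e_X \mult_X e_X = e_X$ yields $u(v)\mult_X u(w) = (v\otimes w)\odot e_X = u(v\otimes w)$ and $u(k)=e_X$. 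The decisive point is centrality: the left- and right-unit laws of the monoid both express the unitor $V\Tensor{V} X \simeq X$, which is the action $\rho$ (for the right unitor one also uses the symmetry of $\Tensor{V}$, hence the commutativity of $V$); unwinding them with the equivariance of $\mult_X$ gives $u(v)\mult_X x = \rho(v,x) = x\mult_X u(v)$ for all $v\in V$, $x\in X$, so the image of $u$ is central and $(u:V\rightarrow X)$ lies in $\Quant{Quant}$.

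\textbf{From an object of $\Quant{Quant}$ to a monoid.} Given a central morphism $f : V\rightarrow Q$, I would equip the suplattice $Q$ with the action $\rho(v,q)=f(v)\mult_Q q$; the quantale axioms on $Q$ and the fact that $f$ is a morphism of quantales make this a genuine $V$-module. The multiplication $\mult_Q$ then preserves suprema in each variable and is $V$-equivariant in the first variable automatically, while equivariance in the second variable, namely $q\mult_Q(f(v)\mult_Q q') = f(v)\mult_Q(q\mult_Q q')$, holds \emph{precisely because} $f$ is central. Hence $\mult_Q$ is a bimorphism and corresponds to a monoid multiplication $\mu : Q\Tensor{V} Q\rightarrow Q$, with unit $f$ itself; the monoid axioms follow from associativity and unitality of $Q$.

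Finally, I would verify that these two assignments are mutually inverse, which on objects is a direct computation using $u(v)=\rho(v,e_X)$ in one direction and $\rho(v,q)=f(v)\mult_Q q$ in the other, and that they are functorial: a morphism of monoids in $\Mod$ is a $\Mod$-morphism commuting with the multiplications and units, which is exactly a quantale morphism commuting with the structure maps from $V$, i.e. a morphism in the coslice $V\downarrow\mathtt{Quant}$; since centrality is a property rather than extra structure, no further compatibility is required. The main obstacle, and the conceptual heart of the statement, is the equivalence between centrality of the image of $V$ and equivariance of $\mult_Q$ in the second variable, together with the bookkeeping of the two unitors of $\Tensor{V}$ --- both of which encode the same action $\rho$, and it is their coincidence that forces the commutation $f(v)\mult_Q q = q\mult_Q f(v)$.
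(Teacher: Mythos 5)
Your proposal is correct and takes essentially the same route as the paper's proof: both directions rest on the bimorphism-classifying property of $\Tensor{V}$, with centrality extracted from the fact that the left and right unit laws both express the action (via the unitors), and with the converse construction $\rho(v,q) = f(v)\mult_Q q$, where centrality is exactly what makes $\mult_Q$ equivariant in the second variable and hence descend to $Q \Tensor{V} Q$. The differences are only presentational: you compute pointwise with bimorphisms and exhibit explicit mutually inverse assignments, whereas the paper argues diagrammatically in $\mathtt{Sup}$ (obtaining the underlying quantale from the lax monoidal forgetful functor $\Mod \rightarrow \mathtt{Sup}$ via doctrinal adjunction, and phrasing the descent of $\mult_Q$ as coequalizing the fork defining $\Tensor{V}$) and then proves its functor fully faithful and essentially surjective.
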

\begin{proof}
  Since the monoidal structure on $\Mod$ is the one induced by a strong commutative monad, as we explained in Remark \ref{StrongMod}, by Theorem \ref{SCMon}, it follows that the functor
  $$ V \Tensor{2} = : \mathtt{Sup} \rightarrow \Mod, \qquad X \mapsto V\Tensor{2} X,$$
  is strong monoidal. This implies, by doctrinal adjunction (see \cite{10.1007/BFb0063105}), that the forgetful functor
  $$U : \Mod \rightarrow \mathtt{Sup} $$
  is lax monoidal. Let $X$ and $Y$ be $V$-modules. Then the laxator
  $$ \pi : X \Tensor{2}Y \rightarrow X \Tensor{V} Y,$$
  is the universal bimorphism that ``defines'' $\Tensor{V}$, while
  $$2 \rightarrow V$$
  is the canonical inclusion. Moreover, since $U$ is lax monoidal, it sends monoids in $\Mod$ to monoids in $\mathtt{Sup}$.\par \medskip

  Let $e : V \rightarrow X$, $ m :X \Tensor{V} X \rightarrow X$, $\alpha : V\Tensor{2}X \rightarrow X$ be an object of $ \Mon{\Mod}{V}$. Then $X = (X, m \cdot \pi, e \cdot k)$ is a monoid in $\mathtt{Sup}$, thus a quantale.
  In order to have an object in $\Quant{Quant}$, we have to show that $e : V \rightarrow X$ is a quantale homomorphism and that it satisfies the condition $e(v) \mult_X x = x \mult_X e(v)$ (where $\mult_X$ is a shortcut for $m \cdot \pi$, the multiplication of $X$ seen as a quantale).\\
	Let's start by proving that $e: V \rightarrow X$ is a quantale homomorphism. We notice that the following diagram
\begin{center}
	\begin{tikzcd}
	V \Tensor{\mathbf{2}} V \arrow[r, "e \Tensor{\mathbf{2}} e"] \arrow[d, "\otimes", swap]
	& X \Tensor{\mathbf{2}} X \arrow[d, "m \cdot \pi"] \\
	V \arrow[r, "e"]
	& X \end{tikzcd}
\end{center}
	can be decomposed as follows:
\begin{center}
	\begin{tikzcd}
	V \Tensor{\mathbf{2}} V \arrow[r, "\Id \Tensor{\mathbf{2}} e"] \arrow[d]
	& V \Tensor{\mathbf{2}} X \arrow[d, "\alpha", swap] \arrow[r, "e \Tensor{\mathbf{2}} \Id"] & X \Tensor{\mathbf{2}} X \arrow[d, "m \cdot \pi"] \\
	V \arrow[r, "e"] & X \arrow[r, equal]  & X.
	\end{tikzcd}
\end{center}
	The left diagram commutes because $e$ is an homomorphism of modules, while we can further decompose the right diagram as
\begin{center}
	\begin{tikzcd}
	V \Tensor{\mathbf{2}} X \arrow[r, "e \Tensor{\mathbf{2}} \Id"] \arrow[d, "\pi", swap] & X \Tensor{\mathbf{2}} X \arrow[d,  "\pi"] \\
	V \Tensor{V} X  \arrow[r, "e \Tensor{V} \Id"] \arrow[d, "\overline{l}_X", swap]  & X \Tensor{V} X \arrow[d, "m"] \\
	X \arrow[r, equal] & X,
	\end{tikzcd}
\end{center}
	where the bottom part commutes since $e$ is the unit of a monoid in $\Mod$.\\
	The condition $e(v) \mult_X x = x \mult_X e(v)$ follows by a direct contemplation of the following diagram,
\begin{center}
	\begin{tikzcd}
	V \Tensor{\mathbf{2}} X \arrow[r, "e \Tensor{\mathbf{2}} \Id"] \arrow[d, "\pi", swap]
	& X \Tensor{\mathbf{2}} X \arrow[d, "\pi"]  & X \Tensor{\mathbf{2}} V \arrow[d, " \pi", swap]  \arrow[l, "\Id \Tensor{\mathbf{2}} e", swap]\\
	V\Tensor{V} X \arrow[r, "e \Tensor{V} \Id"] \arrow[dr,bend right ] & X \arrow[d, "m", swap]  & X \Tensor{V} V. \arrow[l, "\Id \Tensor{V} e", swap]  \arrow[dl,bend left]\\
	& X &
	\end{tikzcd}
\end{center}
 Thus we have a functor
	$$ F : \Mon{\Mod}{V} \rightarrow \Quant{Quant}.$$
	This functor is clearly faithful, and with a little effort it is possible to show that $F$ is also full. Let $${f : (X, m \cdot \pi, e_X \cdot k) \rightarrow (Y, n \cdot \pi, e_X \cdot k)}$$ be a morphism of quantales where $(X, \rho, m, e_X)$, $(Y,\theta, n, e_Y)$ are monoids in $\Mod$. %From the commutativity of the following
%\begin{center}
%	\begin{tikzcd}
%	V \Tensor{2} X \arrow[r, "e \Tensor{2} \Id"] \arrow[d, "\pi"]
%	& X \Tensor{2} X \arrow[d, "\pi"]  \\
%	V\Tensor{V} X \arrow[r, "e \Tensor{V} \Id"] \arrow[dr,bend right, "\overline{\alpha}"] & X \arrow[d, "m"]   \\
%	& X &
%	\end{tikzcd}
%\end{center}
%	one has that $\alpha =  m \cdot \pi \cdot (e \Tensor{2} \Id)$, where $\alpha$ is the $V$-module structure $X$ posses. This, plugged in the following commutative diagram, it shows that $f$ is equivariant
%\begin{center}
%	\begin{tikzcd}
%	V \Tensor{2} X \arrow[r, "e_X \Tensor{2} \Id"] \arrow[d, "\Id \Tensor{2} f"]
%	& X \Tensor{2} X \arrow[d, "f \Tensor{2} f"] \arrow[r] & X \Tensor{V} X \arrow[r, "m"] & X \arrow[d, "f"] \\
%	V \Tensor{2} Y \arrow[r, "e_Y \Tensor{2} \Id"] & Y \Tensor{2} Y \arrow[r,twoheadrightarrow]  & Y \Tensor{V} Y \arrow[r, "n"] & Y
%	\end{tikzcd}
%\end{center}
%	Since $f$ is equivariant it follows that $ \pi \cdot (f \Tensor{2} f)$ equalizes the two arrows that "define" $X \Tensor{V} X$, and thus it follows there exists a unique arrow $f \Tensor{V} f : X \Tensor{V} X \rightarrow Y \Tensor{V} Y $ that makes the following commutes
%\begin{center}
%	\begin{tikzcd}
%	X \Tensor{2} X \arrow[r, "f \Tensor{2} f"] \arrow[d, "\pi"] & Y \Tensor{2} Y \arrow[d,  "\pi", %twoheadrightarrow] \\
%	X \Tensor{V} X  \arrow[r, "f \Tensor{V} f"] \arrow[d, "m"]  & Y \Tensor{V} Y \arrow[d, "n"] \\
%	X \arrow[r, "f"] & Y
%	\end{tikzcd}
%\end{center}
It is possible to show that $f$ is equivariant, that is to say,
$$ f \cdot \rho = \theta \cdot (\Id \Tensor{\mathbf{2}} f).$$
Consider the following diagram
\begin{center}
	\begin{tikzcd}
	X \Tensor{\mathbf{2}} X \arrow[r, "f \Tensor{\mathbf{2}} f"] \arrow[d, "\pi", swap] & Y \Tensor{\mathbf{2}} Y \arrow[d,  "\pi"] \\
	X \Tensor{V} X  \arrow[r, "f \Tensor{V} f"]  \arrow[d, "m", swap]  & Y \Tensor{V} Y \arrow[d, "n"] \\
  X \arrow[r, "f"] & Y.
	\end{tikzcd}
\end{center}
The outer and the upper diagrams commute, moreover, by the universal property of $\pi$, it follows that also the lower diagram commutes. Thus we can conclude that $f$ is a monoid homomorphism too.\par\medskip
Let us prove that $F$ is essentially surjective. Let $f : V \rightarrow Q, \ \ \mult_Q : Q \Tensor{\mathbf{2}} Q \rightarrow Q, \ \ e : \mathbf{2} \rightarrow Q$ be and object of $\Quant{Quant}$. We have that
\begin{center}
	\begin{tikzcd}
	V \Tensor{\mathbf{2}} Q \arrow[r, "f \Tensor{\mathbf{2}} \Id "] & Q \Tensor{\mathbf{2}} Q  \arrow[r, "\mult_Q"] & Q
	\end{tikzcd}
\end{center}
	defines an action, call it $\rho$. Indeed we have
\begin{center}
	\begin{tikzcd}
	\mathbf{2}  \Tensor{\mathbf{2}} Q \arrow[r, "k \Tensor{\mathbf{2}} \Id "] \arrow[dr, "e \Tensor{\mathbf{2}} \Id ", swap] \arrow[ddr, "l_Q", bend right, swap] & V \Tensor{\mathbf{2}} Q \arrow[d, "f \Tensor{\mathbf{2}} \Id "] \\
	& Q \Tensor{\mathbf{2}} Q \arrow[d, "\mult_Q"] \\
	& Q
	\end{tikzcd}
\end{center}
	where the inner triangle is the tensor with $- \Tensor{\mathbf{2}} Q$ of
\begin{center}
	\begin{tikzcd}
	\mathbf{2}  \arrow[r, "k"] \arrow[dr,"e", swap] & V \arrow[d, "f"] \\
	& Q
	\end{tikzcd}
\end{center}
	and the outer one is the unital condition for the multiplication of $Q$.\\
	The associativity condition follows from the fact that $f$ is a quantale homomorphism and from the associativity of $\mult_Q$, as depicted in the following commutative diagram
\begin{center}
	\begin{tikzcd}[column sep=huge]
	V \Tensor{\mathbf{2}} V \Tensor{\mathbf{2}} Q \arrow[r, "\Id  \Tensor{\mathbf{2}} (f \Tensor{\mathbf{2}} \Id ) "] \arrow[d, equal] & V \Tensor{\mathbf{2}} V \Tensor{\mathbf{2}} Q  \arrow[r, "\Id  \Tensor{\mathbf{2}} \mult_Q "] \arrow[d, "f \Tensor{\mathbf{2}} \Id ",swap] & V \Tensor{\mathbf{2}} Q \arrow[d, "f \Tensor{\mathbf{2}} \Id "] \\
	V \Tensor{\mathbf{2}} V \Tensor{\mathbf{2}} Q \arrow[r, "f \Tensor{\mathbf{2}} (f \Tensor{\mathbf{2}} \Id ) "] \arrow[d, "\otimes \Tensor{\mathbf{2}} \Id ",swap] & Q \Tensor{\mathbf{2}} Q \Tensor{\mathbf{2}} Q  \arrow[r, "\Id  \Tensor{\mathbf{2}} \mult_Q "] \arrow[d, "\mult_Q",swap] & Q \Tensor{\mathbf{2}} Q\arrow[d, "\mult_Q"] \\
	V \Tensor{\mathbf{2}} V  \arrow[r, "f \Tensor{\mathbf{2}} \Id  "] & Q \Tensor{\mathbf{2}} Q \arrow[r, "\mult_Q "] & Q.
	\end{tikzcd}
\end{center}
	We can prove that $\mult_Q : Q \Tensor{\mathbf{2}} Q \rightarrow Q$ coequalizes the fork that defines $Q \Tensor{V} Q$, hence that there is a unique $\overline{\mult}_Q : Q \Tensor{V} Q \rightarrow Q$ that makes the following diagram commute
\begin{center}
	\begin{tikzcd}
	Q \Tensor{\mathbf{2}} Q \arrow[d,"\pi", swap] \arrow[r, "\mult_Q"] & Q. \\
	Q \Tensor{V} Q \arrow[ur, "\overline{\mult}_Q", bend right]
	\end{tikzcd}
\end{center}
Let us prove this statement. In the fork that defines $Q \Tensor{V} Q$ the two arrows are defined as follows
\begin{center}
	\begin{tikzcd}
	V \Tensor{\mathbf{2}} Q \Tensor{\mathbf{2}} Q \arrow[r, "\rho \Tensor{\mathbf{2}} \Id "] & V \Tensor{\mathbf{2}} Q
	\end{tikzcd}
\end{center}
	and
\begin{center}
	\begin{tikzcd}
	V \Tensor{\mathbf{2}} Q \Tensor{\mathbf{2}} Q \arrow[r, "\tau \Tensor{\mathbf{2}} \Id "] & Q \Tensor{\mathbf{2}} V \Tensor{\mathbf{2}} Q \arrow[r, "\Id  \Tensor{\mathbf{2}} \rho"] & Q \Tensor{\mathbf{2}} Q.
	\end{tikzcd}
\end{center}
	But, since $\rho =  \mult_Q \cdot (f \Tensor{\mathbf{2}} \Id )$, and since the condition $f(v) \mult_Q u = u \mult_Q f(v)$ means
\begin{center}
	\begin{tikzcd}
	Q \Tensor{\mathbf{2}} Q  \arrow[r, "\mult_Q"] & Q & Q \Tensor{\mathbf{2}} Q \arrow[l, "\mult_Q", swap]  \\
	V \Tensor{\mathbf{2}} Q \arrow[u, "f \Tensor{\mathbf{2}} \Id "]  \arrow[rr, "\tau_{V,Q}"] & &    Q \Tensor{\mathbf{2}} V \arrow[u, "\Id  \Tensor{\mathbf{2}} f", swap]
	\end{tikzcd}
\end{center}
	by using $(\Id  \Tensor{\mathbf{2}} f) \Tensor{\mathbf{2}} \Id  = \Id  \Tensor{\mathbf{2}} (f \Tensor{\mathbf{2}} \Id  )$ and $\mult_Q \cdot (\Id  \Tensor{\mathbf{2}} \mult_Q) = \mult_Q \cdot (\mult_Q \Tensor{\mathbf{2}} \Id )$, we have
\begin{alignat*}{2}
	\mult_Q \cdot (\Id  \Tensor{\mathbf{2}} \rho) \cdot (\tau \Tensor{\mathbf{2}} \Id ) &= \mult_Q \cdot (\Id  \Tensor{\mathbf{2}} \mult_Q) \cdot (\Id  \Tensor{\mathbf{2}} (f \Tensor{\mathbf{2}} \Id )) \cdot (\tau \Tensor{\mathbf{2}} \Id )\\
	&=\mult_Q \cdot (\mult_Q \Tensor{\mathbf{2}} \Id )\cdot ((\Id  \Tensor{\mathbf{2}} f) \Tensor{\mathbf{2}} \Id ) \cdot (\tau \Tensor{\mathbf{2}} \Id ) \\
	&=\mult_Q \cdot ((\mult_Q \cdot (\Id \Tensor{\mathbf{2}} f) \cdot \tau)\Tensor{\mathbf{2}} \Id ) \\
	&= \mult_Q \cdot ((\mult_Q \cdot (f \Tensor{\mathbf{2}} \Id ))\Tensor{\mathbf{2}} \Id ) \\
	&= \mult_Q \cdot (\rho \Tensor{\mathbf{2}} \Id ).
\end{alignat*}

	The associativity of $\overline{\mult}_Q$ follows from the associativity of $\mult_Q$, %by noticing that, being $\Tensor{2}$ a closed monoidal structure, $\pi \cdot (\Id  \Tensor{2} \pi) : Q \Tensor{2} Q \Tensor{2} Q \rightarrow Q \Tensor{V} Q \Tensor{V} Q$ is an epi and thus, using lemma (???) applied to the commutative diagram that expresses the associativity of $m$ and the corrispondent one for $\overline{m}$ we can conclude that the latter commutes too.\\
  while the unit condition follows by a direct inspection of the following diagram
\begin{center}
	\begin{tikzcd}
	V \Tensor{\mathbf{2}} Q \arrow[r, "f \Tensor{\mathbf{2}} \Id "] \arrow[d, "\pi", swap] & Q \Tensor{\mathbf{2}} Q \arrow[d, "\pi", swap] & Q \Tensor{\mathbf{2}} V \arrow[d, "\pi", swap] \arrow[l, "\Id  \Tensor{\mathbf{2}} f", swap] & Q \Tensor{\mathbf{2}} V \arrow[l, "\simeq", swap] \arrow[d, "\pi"] \\
	V \Tensor{V} Q \arrow[r, "f \Tensor{V} \Id "] \arrow[dr, "\overline{l}_Q", bend right, swap] & Q \Tensor{V} Q \arrow[d, "\overline{\mult}_Q"] & Q \Tensor{V} V  \arrow[l, "\Id  \Tensor{V} f", swap]\arrow[dl, "\overline{r}_Q", bend left] & Q \Tensor{V} V. \arrow[l, "\simeq", swap] \arrow[dll, "\overline{l}_Q", bend left] \\
	& Q
	\end{tikzcd}
\end{center}
	Indeed, from
	$$ \overline{\mult}_Q \cdot (f \Tensor{V} \Id ) \cdot \pi =  \overline{\mult}_Q\cdot (\Id \Tensor{V} f) \cdot \tau \cdot \pi,$$
	if we cancel $\pi$, and if we notice that $\overline{\mult}_Q \cdot (f \Tensor{V} \Id ) =: \rho = \overline{l}_Q,$ we have
	$$\overline{l}_Q = \overline{\mult}_Q \cdot (\Id  \Tensor{V} f) \cdot \tau.$$
	Then, since $\overline{r}_Q = \overline{l}_Q \cdot \tau^{-1}$, we have $\overline{\mult}_Q\cdot (\Id  \Tensor{V} f) = \overline{r}_Q.$ \par \medskip

	The last thing we need to show is that $f : V \rightarrow Q$ is equivariant. Since $f$ is a morphism of quantales, and since the action on $V$ is its multiplication, the result follows directly from the following commutative diagram
\begin{center}
	\begin{tikzcd}
	V \Tensor{\mathbf{2}} V \arrow[r, "\Id  \Tensor{\mathbf{2}} f"] \arrow[d, "\otimes", swap] & V \Tensor{\mathbf{2}} Q \arrow[r, "f \Tensor{\mathbf{2}} \Id "] \arrow[dr, "\rho"] & Q \Tensor{\mathbf{2}} Q \arrow[d, "\mult_Q"] \\
	V  \arrow[rr, "f"]& &  Q.
	\end{tikzcd}
\end{center}
This ends the proof of the proposition.\\
\end{proof}
\begin{re}
	Note that in the case in which $V = \mathbf{2}$, $\Quant{Quant} \simeq \mathtt{Quant}$. Every morphism of quantales $f : \mathbf{2} \rightarrow Q$ satisfies $f(v) \cdot u = u \cdot f(v)$ and:
	$$ \mathbf{2} \downarrow \mathtt{Quant} \simeq \mathtt{Quant}.$$
\end{re}
\section{Second Interlude: Injectives and Monoids}\label{InjMon}
The enriched powerset monad $\Pow{V}$ is a strong commutative monad. This means (see Proposition \ref{StrongPow}) that the free functor
$$ \Pow{V} : \Sets \rightarrow \Alg{V}$$
is strong monoidal. Thus $\Pow{V}$ extends to a functor, which we denote again as $\Pow{V}$, between the corresponding categories of monoids:
$$\Pow{V} :  \mathtt{Mon} \rightarrow \Mon{\Alg{V}}{\Pow{V}}.$$
\begin{re}
	Let $(M, \cdot, e)$ be a monoid. Then $\Pow{V}(M)$ is the free $\Pow{V}$-algebra whose monoid structure is the following. The unit of $\Pow{V}(M)$ is
	\[ V \simeq \Pow{V}(1) \xrightarrow{\Pow{V}(e)} \Pow{V}(M). \]
	The multiplication of $\Pow{V}(M)$ is the unique $\Pow{V}$-morphism that corresponds to the bimorphism
	\[ \Pow{V}(M) \times \Pow{V}(M) \rightarrow \Pow{V}(M \times M) \xrightarrow{\Pow{V}(\cdot)} \Pow{V}(M), \]
	where
	\[ \Pow{V}(M) \times \Pow{V}(M) \rightarrow \Pow{V}(M \times M), \quad (\psi, \phi) \mapsto  \psi \boxtimes \phi : (m,n) \mapsto \psi(m)\otimes \phi(n).\]
\end{re}
Since $\Pow{V}$ is left adjoint to the forgetful functor
$$\Alg{V} \rightarrow \Sets,$$
by using Lemma \ref{MonoAdj}, we can conclude that the diagram
\begin{center}
	\begin{tikzcd}
		 \Mon{\Alg{V}}{\Pow{V}} \ar[r] \ar[d] & \mathtt{Mon}  \ar[d]\\
		 \Alg{V}\ar[r,] & \Sets
	\end{tikzcd}
\end{center}
commutes, where all functors forget part of the relevant structure.
\begin{lem} (\cite{Porst2008})\label{MonoAdj}
	Let $F: C \rightarrow C'$ be a monoidal functor between monoidal categories $C,C'$. If $F$ has a right adjoint $G$ with counit $\epsilon : FG \Rightarrow \Id _{C'}$. Then $\overline{F} : \mathtt{Mon}(C) \rightarrow \mathtt{Mon}(C')$ has a right adjoint $\overline{G}$ with counit $\overline{\epsilon}$ such that:
	\begin{itemize}
		\item The diagram

		\begin{tikzcd}
			\mathtt{Mon}(C') \ar[r,"\overline{G}"] \ar[d, "U'", swap] & \mathtt{Mon}(C)  \ar[d,"U"]\\
			 C' \ar[r, "G"] & C
		\end{tikzcd}

		commutes.
		\item $U'(\overline{\epsilon}_{(C,m,e)}) = \epsilon_C,$ for all $(C,m,e) \in \mathtt{Mon}(C')$.
	\end{itemize}
\end{lem}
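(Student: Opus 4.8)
The plan is to recognize this statement as an instance of Kelly's doctrinal adjunction \cite{10.1007/BFb0063105} and to transport the adjunction $F \dashv G$ through the monoid construction $\mathtt{Mon}(-)$. First I would record that, since $F$ is (strong) monoidal, it is in particular lax monoidal, so it carries monoids to monoids: for a monoid $(A,m,e)$ in $C$ the object $FA$ becomes a monoid with multiplication $F(m)\cdot F_2$ and unit $F(e)\cdot F_0$, where $(F_2,F_0)$ is the monoidal structure of $F$. This is how $\overline{F}$ acts on objects, and on arrows $\overline{F}$ agrees with $F$, since a monoid homomorphism is just an arrow compatible with the structure maps, a condition preserved by applying $F$.

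Next I would equip $G$ with its canonical lax monoidal structure. Because $F$ is \emph{strong} monoidal and $F\dashv G$, the mates of the invertible comparisons $F_2^{-1}$ and $F_0^{-1}$ under the adjunction produce morphisms $G_2 : GB \otimes GB' \to G(B\otimes B')$ and $G_0 : I_C \to G I_{C'}$ that make $G$ lax monoidal; it is precisely the invertibility of $F_2,F_0$ that makes these mates a \emph{lax} (rather than oplax) structure, and that simultaneously forces the unit $\eta$ and the counit $\epsilon$ to be monoidal natural transformations. This is exactly the content of doctrinal adjunction for a strong monoidal left adjoint. Being lax monoidal, $G$ also sends monoids to monoids, so I set $\overline{G}(B,n,u) = (GB,\, G(n)\cdot G_2,\, G(u)\cdot G_0)$. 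By construction the underlying object of $\overline{G}(B,n,u)$ is $GB = G(U'(B,n,u))$, so the square $U\cdot\overline{G} = G\cdot U'$ commutes on the nose, which gives the first bullet.

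To obtain $\overline{F}\dashv\overline{G}$ I would show that the adjunction bijection $\Phi_{A,B} : C'(FA,B)\xrightarrow{\sim} C(A,GB)$ restricts to a bijection between monoid homomorphisms $\overline{F}A\to B$ and monoid homomorphisms $A\to\overline{G}B$. This restriction is exactly where the monoidality of $\eta$ and $\epsilon$ is used: a morphism $g:FA\to B$ respects multiplications and units if and only if its transpose $\Phi(g):A\to GB$ does, because passing to mates converts the squares expressing $g$'s compatibility with $F_2,F_0$ into the squares expressing $\Phi(g)$'s compatibility with $G_2,G_0$. The resulting natural bijection is the desired adjunction; its unit and counit are $\eta$ and $\epsilon$ regarded as monoid homomorphisms, and the triangle identities are inherited from those of $F\dashv G$ because $U$ and $U'$ are faithful. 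In particular $\overline{\epsilon}_{(C,m,e)}$ is $\epsilon_C$ viewed in $\mathtt{Mon}(C')$, so $U'(\overline{\epsilon}_{(C,m,e)}) = \epsilon_C$, which is the second bullet.

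The main obstacle is the verification that $\eta$ and $\epsilon$ are monoidal, equivalently that $\Phi$ restricts to monoid homomorphisms; everything else---the definitions of $\overline{F}$ and $\overline{G}$, the commuting square, and the counit identity---is formal once $G$ is known to be lax monoidal and the forgetful functors are faithful. The coherence bookkeeping behind the mate correspondence, namely that $G_2,G_0$ built from $F_2^{-1},F_0^{-1}$ satisfy the lax monoidal axioms and interact correctly with $m,e,n,u$, is classical but is the only place requiring a genuine diagram chase; I would either carry it out directly or simply invoke doctrinal adjunction \cite{10.1007/BFb0063105} to supply it.
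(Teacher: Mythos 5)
Your proposal is correct, but there is nothing in the paper to compare it against: the paper does not prove this lemma at all, it imports it verbatim from \cite{Porst2008}. Your argument via doctrinal adjunction is the standard proof of this fact (and essentially the one in the literature): the inverses $F_2^{-1}$, $F_0^{-1}$ form an oplax structure on $F$, their mates under $F \dashv G$ give a lax monoidal structure on $G$, the unit and counit are then monoidal natural transformations, so the whole adjunction lives in the $2$-category of monoidal categories, lax monoidal functors and monoidal transformations, and applying the $2$-functor $\mathtt{Mon}(-)$ produces $\overline{F} \dashv \overline{G}$ with $U \cdot \overline{G} = G \cdot U'$ and $U'(\overline{\epsilon}_{(C,m,e)}) = \epsilon_C$ on the nose. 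One point deserves emphasis: the lemma as printed says only ``monoidal functor'', which in Kelly's (and Porst's) terminology means \emph{lax} monoidal, whereas your proof uses the invertibility of $F_2$ and $F_0$ in an essential way. This is not a removable convenience: by doctrinal adjunction \cite{10.1007/BFb0063105}, an adjunction whose left adjoint is lax monoidal lifts to an adjunction of the displayed form, with $\overline{G}$ a lift of $G$, precisely when the left adjoint is \emph{strong} monoidal. (For instance, for an algebra $A_0 \neq k$ over a field $k$, the functor $A_0 \otimes -$ on vector spaces is lax monoidal with right adjoint $\hom(A_0,-)$, yet the induced endofunctor on $k$-algebras admits no right adjoint lifting $\hom(A_0,-)$: testing against the initial algebra $k$ already gives a contradiction.) So your reading of the hypothesis is the correct one, and it is also the only one the paper needs: the lemma is applied to $\Pow{V} : \Sets \rightarrow \Alg{V}$, which is strong monoidal by Proposition \ref{StrongPow}.
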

The list functor $ L :\Sets \rightarrow \mathtt{Mon}$ is the left adjoint to the forgetful functor $\mathtt{Mon} \rightarrow \Sets$. The functor ${\Pow{V}:  \mathtt{Mon} \rightarrow \Mon{\Alg{V}}{\Pow{V}}}$ has as right adjoint the "lift" of the right adjoint of $\Pow{V}$ (which is the forgetful functor $\Alg{V} \rightarrow \Sets$). If we combine everything together, we discover that
$$ W : \Mon{\Alg{V}}{\Pow{V}} \rightarrow \Sets,$$
which is the functor obtained by composing the right adjoint to $\Pow{V}$ with the forgetful functor $\mathtt{Mon} \rightarrow \Sets,$ it is the right adjoint of
$$\Pow{V} L : \Sets \rightarrow \Mon{\Alg{V}}{\Pow{V}}.$$
We want to show that $ W : \Mon{\Alg{V}}{\Pow{V}} \rightarrow \Sets$ is monadic.
\begin{defin}
	$\mathtt{Alg}(T_{+}) $ is the category of algebras for the endofunctor
	$$ T_{+} : \Alg{V} \rightarrow \Alg{V}, \ \ X \mapsto (X \Tensor{\Pow{V}} X) \amalg V.$$
	That is to say the the category whose objects are of the form $m : (X \Tensor{\Pow{V}} X) \amalg V \rightarrow X$ (here $m$ is an arrow in $\Alg{V}$) and whose arrows $ f : (X, m) \rightarrow (Y, n)$ are those in $\Alg{V}$ that make the following diagram commute
	\begin{center}
		\begin{tikzcd}[row sep=large, column sep=huge]
		(X \Tensor{\Pow{V}} X) \amalg V  \arrow[d, "m", swap] \arrow[r, "(f \Tensor{\Pow{V}} f) \amalg \Id"] & (Y \Tensor{\Pow{V}} Y) \amalg V \arrow[d, "n"] \\
		X \arrow[r, "f"] & Y.
		\end{tikzcd}
	\end{center}
\end{defin}
\begin{re}
	Because $W$ just forgets the structure, we can immediately define
	$$\overline{W} : \mathtt{Alg}(T_{+}) \rightarrow \Sets,$$
	again as the forgetful functor; it is clear that its restriction to $\Mon{\Alg{V}}{\Pow{V}}$ is $W$.
	\end{re}
\begin{lem}
	\label{Epi}
	$\Mon{\Alg{V}}{\Pow{V}}$ is closed in $\mathtt{Alg}(T_{+}) $ with respect to epimorphisms.
\end{lem}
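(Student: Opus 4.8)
The plan is to transport the three monoid equations from $(X,\mu_X,e_X)$ to $(Y,\mu_Y,e_Y)$ along the epimorphism $f$, the whole argument hinging on the fact that the closed monoidal structure on $\Alg{V}$ makes tensoring preserve epimorphisms. First I would unpack the data. By the universal property of the coproduct in $\Alg{V}$, a $T_{+}$-algebra structure $m : (X \Tensor{\Pow{V}} X) \amalg V \to X$ is the same as a pair $(\mu_X, e_X)$ with $\mu_X : X \Tensor{\Pow{V}} X \to X$ and $e_X : V \to X$, and a morphism of $T_{+}$-algebras is precisely an $\Alg{V}$-map $f$ satisfying $f \cdot \mu_X = \mu_Y \cdot (f \Tensor{\Pow{V}} f)$ and $f \cdot e_X = e_Y$. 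Hence $\Mon{\Alg{V}}{\Pow{V}}$ sits inside $\mathtt{Alg}(T_{+})$ as the full subcategory cut out by associativity and the two unit laws, and the lemma reduces to showing that these three equalities of $\Alg{V}$-morphisms hold for $Y$ once they hold for $X$.

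Second, I would record the key preservation property. Since $(\Alg{V}, \Tensor{\Pow{V}}, V)$ is closed symmetric monoidal (established in Proposition \ref{StrongPow}), each functor $-\Tensor{\Pow{V}} Z$ and $Z \Tensor{\Pow{V}} -$ is a left adjoint and therefore preserves epimorphisms; writing $f \Tensor{\Pow{V}} f = (f \Tensor{\Pow{V}} \Id)\cdot(\Id \Tensor{\Pow{V}} f)$ and iterating, it follows that $f\Tensor{\Pow{V}} f$ and $f \Tensor{\Pow{V}} f \Tensor{\Pow{V}} f$ are epimorphisms in $\Alg{V}$ as soon as the underlying $\Alg{V}$-map of $f$ is. That the underlying map of $f$ is epic in $\Alg{V}$ is the one delicate point: the epimorphisms to which the lemma will be applied are the (regular) quotient maps produced in the monadicity argument, and in these algebraic categories such maps are surjective on underlying sets, so they remain epimorphisms in the equational category $\Alg{V}$ (and split epimorphisms, being preserved by every functor, are covered a fortiori).

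Third comes the actual transport, a short diagram chase for each axiom. For associativity I would precompose the two candidate maps $\mu_Y \cdot (\mu_Y \Tensor{\Pow{V}} \Id)$ and $\mu_Y \cdot (\Id \Tensor{\Pow{V}} \mu_Y)$ with the epimorphism $f \Tensor{\Pow{V}} f \Tensor{\Pow{V}} f$; repeated use of $f \cdot \mu_X = \mu_Y \cdot (f \Tensor{\Pow{V}} f)$ together with bifunctoriality of $\Tensor{\Pow{V}}$ rewrites both composites as $f \cdot \mu_X \cdot (\mu_X \Tensor{\Pow{V}} \Id)$ and $f \cdot \mu_X \cdot (\Id \Tensor{\Pow{V}} \mu_X)$, which coincide by associativity in $X$; cancelling the epimorphism yields associativity for $Y$. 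For the unit laws I would precompose with $\Id_V \Tensor{\Pow{V}} f$ (respectively with $f \Tensor{\Pow{V}} \Id_V$), use $e_Y = f \cdot e_X$, the compatibility of $f$ with the multiplications, naturality of the unitors, and the unit laws in $X$, and again cancel the relevant epimorphism.

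The main obstacle is the second step, not the routine chases of the third: everything depends on $f$ remaining epic after tensoring, which is exactly where closedness of $(\Alg{V},\Tensor{\Pow{V}},V)$ is indispensable, and on ensuring that the underlying $\Alg{V}$-morphism of an epimorphism of $T_{+}$-algebras is itself an epimorphism. I would settle the latter by identifying the epimorphisms in play as regular (hence surjective) and invoking that surjections in $\Alg{V}$ are epic, after which the equational transport goes through mechanically for all three axioms.
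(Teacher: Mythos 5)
Your argument is correct and is essentially the paper's own proof (which follows Porst, Proposition~2.6): the same identification of a $T_{+}$-algebra with a multiplication--unit pair, the same key use of the closedness of $(\Alg{V},\Tensor{\Pow{V}},V)$ to conclude that $d \Tensor{\Pow{V}} d \Tensor{\Pow{V}} d$ is still an epimorphism, and the same cancellation of that epimorphism to transport associativity and the unit laws from the domain to the codomain. The only point of difference is the ``delicate point'' you isolate---whether an epimorphism of $\mathtt{Alg}(T_{+})$ has an epic underlying $\Alg{V}$-map---which the paper's proof silently glosses over (it tensors and cancels the epimorphism without comment), so your explicit reduction to the surjective quotient maps arising in the monadicity argument is a careful refinement of the same approach rather than a divergence from it.
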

\begin{proof}
	(Based on \cite{Porst2008}, Proposition 2.6) \\
Let $(N, n, e_n)$ be an object of $\Mon{\Alg{V}}{\Pow{V}}$ and let $ d : (N, n, e_n) \rightarrow (M, m, e_m)$ be an epimorphism in $\mathtt{Alg}(T_{+}) $.\\
	From diagram chasing over
	\begin{center}
		\begin{tikzcd}[row sep=10, column sep=3]
		& N \Tensor{\Pow{V}} N \Tensor{\Pow{V}} N  \arrow[dl, "d \Tensor{\Pow{V}} d \Tensor{\Pow{V}} d", swap] \arrow[rr, "n \Tensor{\Pow{V}} \Id "] \arrow[dd,  "\Id  \Tensor{\Pow{V}} n ", near end, swap] & & N \Tensor{\Pow{V}} N \arrow[dl, "d \Tensor{\Pow{V}} d"]  \arrow[dd, "n"]  \\ M \Tensor{\Pow{V}} M  \Tensor{\Pow{V}} M \arrow[rr, crossing over, "m \Tensor{\Pow{V}} \Id ", near end] \arrow[dd, "\Id  \Tensor{\Pow{V}} m", swap] & & M \Tensor{\Pow{V}} M\\
		& N \Tensor{\Pow{V}} N  \arrow[dl, "d \Tensor{\Pow{V}} d", swap]  \arrow[rr, "n", near end]  & & N  \arrow[dl, "d"] \\
		M \Tensor{\Pow{V}} M \arrow[rr, "m"] & & M  \arrow[from=uu, crossing over, "m"]\\
		\end{tikzcd}
	\end{center}
we get
$$ m \cdot (m \Tensor{\Pow{V}} \Id ) \cdot (d \Tensor{\Pow{V}} d \Tensor{\Pow{V}} d) = m \cdot (\Id  \Tensor{\Pow{V}} m ) \cdot (d \Tensor{\Pow{V}} d \Tensor{\Pow{V}} d),$$
which implies, since $d \Tensor{\Pow{V}} d \Tensor{\Pow{V}} d$ is an epimorphism (being $\Tensor{\Pow{V}}$ a closed structure), that
$$m \cdot (m \Tensor{\Pow{V}} \Id ) = m \cdot (\Id  \Tensor{\Pow{V}} m ).$$
In a similar way one proves the corresponding equation for the unit from which it follows that $(M, m, e_m)$ is an object of $\Mon{\Alg{V}}{\Pow{V}}$.\\
\end{proof}
\begin{lem}
	\label{Coeq}
	The functor $\overline{W} : \mathtt{Alg}(T_{+}) \rightarrow \Sets$ creates coequalizers of $\overline{W}$-split pairs.
\end{lem}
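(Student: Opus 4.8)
The plan is to factor $\overline{W}$ as the composite
$$\mathtt{Alg}(T_{+}) \xrightarrow{U_{T_{+}}} \Alg{V} \xrightarrow{U} \Sets,$$
where $U_{T_{+}}$ forgets the $T_{+}$-algebra structure and $U$ is the forgetful functor of $\Alg{V}$, and then to create the coequalizer in two stages. For the first stage I would use that $U$ is monadic (since $\Alg{V}$ is the Eilenberg--Moore category of $\Pow{V}$): a $\overline{W}$-split pair $\phi,\psi : (X,m_X) \rightrightarrows (Y,m_Y)$ in $\mathtt{Alg}(T_{+})$ becomes, after applying $U_{T_{+}}$, a $U$-split pair in $\Alg{V}$, so by Beck's theorem $U$ creates its coequalizer. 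This yields a unique $\Pow{V}$-algebra $Z$ and a map $q : Y \to Z$ in $\Alg{V}$ whose underlying function is the given split coequalizer in $\Sets$, and $q$ is the coequalizer of $\phi,\psi$ in $\Alg{V}$.

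For the second stage I would lift $q$ along $U_{T_{+}}$. Since objects of $\mathtt{Alg}(T_{+})$ carry no equational axioms, $U_{T_{+}}$ creates every colimit that the endofunctor $T_{+}$ preserves, so it suffices to show that $T_{+}q$ is the coequalizer of $T_{+}\phi, T_{+}\psi$. Writing $T_{+}(-) = (- \Tensor{\Pow{V}} -)\amalg V$, the coproduct with the fixed object $V$ preserves coequalizers, so the whole question reduces to showing that $q \Tensor{\Pow{V}} q$ is the coequalizer of $\phi \Tensor{\Pow{V}} \phi$ and $\psi \Tensor{\Pow{V}} \psi$. Granting this, $q \cdot m_Y$ coequalizes $T_{+}\phi, T_{+}\psi$ (because $\phi,\psi$ are $T_{+}$-morphisms and $q\phi = q\psi$), hence factors uniquely as $m_Z \cdot T_{+}q$; the map $m_Z$ is the unique $T_{+}$-structure making $q$ a morphism, forced since $T_{+}q$ is epic, and a routine check shows $(Z,m_Z)$ is then the coequalizer in $\mathtt{Alg}(T_{+})$. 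Applying $\overline{W}$ returns the original split coequalizer, which is precisely the creation property.

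The main obstacle is the diagonal preservation step, that $q \Tensor{\Pow{V}} q$ is again a coequalizer. Because $(\Alg{V}, \Tensor{\Pow{V}}, V)$ is closed, each functor $Y \Tensor{\Pow{V}} -$ and $- \Tensor{\Pow{V}} Z$ is cocontinuous, so factoring $q \Tensor{\Pow{V}} q = (q \Tensor{\Pow{V}} Z) \cdot (Y \Tensor{\Pow{V}} q)$ exhibits it as a composite of coequalizers in the two separate variables; the delicate point is that coequalizing the two pairs one variable at a time is not \emph{a priori} the same as coequalizing the single diagonal pair. The clean way I would settle this is to transport the splitting: if $s,t$ are the splitting maps, then $s \Tensor{\Pow{V}} s$ and $t \Tensor{\Pow{V}} t$ split $q \Tensor{\Pow{V}} q$, making it an absolute coequalizer of the diagonal pair. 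The genuine subtlety is that the splitting furnished by $\overline{W}$ lives \emph{a priori} only in $\Sets$, so $s \Tensor{\Pow{V}} s$ is not immediately available; I would resolve this by reducing to the reflexive case, where the squaring functor $X \mapsto X \Tensor{\Pow{V}} X$ preserves the coequalizer as a sifted colimit (using cocontinuity in each variable), or by carrying the split data through the monadic comparison. This reflexive/absolute bookkeeping is the one genuinely technical point, everything else being formal, and the argument runs parallel to Porst's treatment in \cite{Porst2008}.
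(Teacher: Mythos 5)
Your two-stage framework is sound in outline: Beck's theorem does create the coequalizer $q : Y \rightarrow Z$ in $\Alg{V}$, the forgetful functor from a category of endofunctor algebras does create any colimit that the endofunctor preserves, and $- \amalg V$ does preserve coequalizers. You have also correctly isolated the crux, namely that everything reduces to the diagonal preservation claim $Z \Tensor{\Pow{V}} Z \cong \mathrm{coeq}(\phi \Tensor{\Pow{V}} \phi,\ \psi \Tensor{\Pow{V}} \psi)$. But neither of your proposed resolutions of that claim works, so there is a genuine gap. Tensoring the splitting is unavailable for exactly the reason you concede: $s$ and $t$ are mere functions, not $\Pow{V}$-algebra morphisms, and $\Tensor{\Pow{V}}$ is only defined on $\Alg{V}$. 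The reflexivization route fails for a subtler reason: replacing $\phi, \psi$ by the reflexive pair $\phi' = [\phi, \Id],\ \psi' = [\psi, \Id] : X \amalg Y \rightrightarrows Y$ keeps the coequalizer object but changes the pair, and since $\Tensor{\Pow{V}}$ is closed and hence distributes over $\amalg$ in each variable, the square $(X \amalg Y) \Tensor{\Pow{V}} (X \amalg Y)$ contains mixed summands $X \Tensor{\Pow{V}} Y$ and $Y \Tensor{\Pow{V}} X$ on which the tensored reflexive pair restricts to $\phi \Tensor{\Pow{V}} \Id$ and $\psi \Tensor{\Pow{V}} \Id$. The sifted-colimit lemma therefore shows that $q \Tensor{\Pow{V}} q$ is the coequalizer of this larger pair, which imposes strictly more relations than the diagonal pair; you only get a canonical epimorphism $\mathrm{coeq}(\phi \Tensor{\Pow{V}} \phi, \psi \Tensor{\Pow{V}} \psi) \rightarrow Z \Tensor{\Pow{V}} Z$, not an isomorphism. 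To induce $m_Z$ through this presentation you would need $q \cdot m_Y \cdot (\phi \Tensor{\Pow{V}} \Id) = q \cdot m_Y \cdot (\psi \Tensor{\Pow{V}} \Id)$, i.e. $q(\phi(x)\cdot y) = q(\psi(x)\cdot y)$; this is not a formal consequence of $q\phi = q\psi$ and the $T_{+}$-morphism property of $\phi, \psi$ --- it is essentially the multiplicativity of $q$ that the lemma is trying to establish, so the argument becomes circular at precisely the delicate point.

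The paper closes this gap by never posing the preservation question in $\Alg{V}$ at all. It applies the squaring functor to the split coequalizer in $\Sets$, where splitness makes it absolute (split coequalizers are preserved by every functor, in particular by $A \mapsto A \times A$); in your notation, $Z \times Z$ is then the coequalizer of the diagonal pair $\phi \times \phi, \psi \times \psi$ in $\Sets$, and $q \cdot m$ (with $m : Y \times Y \rightarrow Y$ the bimorphism corresponding to the tensor multiplication) does coequalize this diagonal pair, because both legs use the same map $\phi$, respectively $\psi$. This produces the function $n : Z \times Z \rightarrow Z$; an elementwise argument using surjectivity of $q$ shows $n$ is a bimorphism, and only then is the classification $\Bim_{\Alg{V}}(Z \times Z, -) \cong \Alg{V}(Z \Tensor{\Pow{V}} Z, -)$ invoked to produce the structure map $\tilde{n} : Z \Tensor{\Pow{V}} Z \rightarrow Z$; the universal property in $\mathtt{Alg}(T_{+})$ is checked by the same devices. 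In fact these ingredients (absoluteness in $\Sets$, the bimorphism universal property, and the cancellation of surjective algebra maps) do prove your diagonal preservation statement, so your scheme can be repaired --- but the repair is the paper's argument, which is what your phrase ``carrying the split data through the monadic comparison'' would have to be unfolded into; the reflexive/sifted shortcut does not do it.
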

\begin{proof}
		Let $R \rightrightarrows X$ be a $\overline{W}$-split pair. Let
		\[
		\begin{tikzcd}
		R \arrow[r, shift left]
		\arrow[r, shift right] & X \arrow[l,  "t", bend right, swap] \arrow[r, "\pi"] &  Q, \arrow[l,  "s", bend right, swap]
		\end{tikzcd}
		\]
	  be its (splitting) coequalizer in $\Sets.$\\
		Since the monoidal structure in $\Alg{V}$ classifies bimorphisms, we have that, associated to the "monoid structure without equations" of $R$ and $X$: $\tilde{r} :R \Tensor{\Pow{V}} R \rightarrow R$ and $\tilde{m} :X \Tensor{\Pow{V}} X \rightarrow X,$ there exist two unique bimorphisms $r : R \times R \rightarrow R$ and $m : X\times X \rightarrow X$.\\
		Because $\pi$ is an epimorphism and $\times$ is closed, it follows that $\pi \times \pi$ is an epimorphism too, hence
		\[
		\begin{tikzcd}
		R \times R  \arrow[r, shift left]
		\arrow[r, shift right] & X \times X  \arrow[l, "t \times t",bend right, swap ] \arrow[r, "\pi \times \pi"] &  Q \times Q \arrow[l, "s \times s", bend right, swap]
		\end{tikzcd}
		\]
		is again a split coequalizer in $\Sets$.
		Moreover, since we obtained $\overline{W}$ first by forgetting the "free monoid structure" and then by forgetting the $\Pow{V}$-structure, and since $\Alg{V}$ is monadic over $\Sets$, it follows that there exists a unique $\Pow{V}$-structure on $Q$ such that $\pi$ becomes a $\Pow{V}$-algebra morphism.\par \medskip
		Since $R \rightrightarrows X$ are both $T_{+}$-morphisms, by using the split and the universal property of coequalizers we get a unique function $ n : Q \times Q \rightarrow Q$, as displayed in the following diagram
		\begin{equation}\label{Comm}
		\begin{tikzcd}
		R \times R  \arrow[r, shift left]
		\arrow[r, shift right] \arrow[d]
		& X \times X\arrow[r] \arrow[d ]  & Q \times Q \arrow[d, "n", dashed] \\
		R \arrow[r, shift left]
		\arrow[r, shift right]
		& X \arrow[r]  & Q.
		\end{tikzcd}
	\end{equation}
 Since $\pi \cdot m \cdot (\pi \times \pi)$ and $\pi \cdot m$ are both bimorphisms in $\Alg{V}$, it follows that $n$ is a bimorphism too. Indeed, fix $q \in Q$, then---in $\Sets$---we have that the diagram
		\begin{center}
		\begin{tikzcd}
			X \arrow[r, "\pi"] \arrow[d, "\langle x \mbox{,} \Id\rangle ", swap] & Q \arrow[d, "\langle q \mbox{,} \Id\rangle"] \\
			X \times X \arrow[r] \arrow[d, "m", swap] & Q \times Q \arrow[d,"n"] \\
				X \arrow[r, "\pi"] & Q
			\end{tikzcd}
		\end{center}
	commutes, where $\pi(x) = q$ and $\langle q, \mbox{Id} \rangle (w) =  (q,w).$ Because $\pi$ is and epimorphism and
	$$ \pi \cdot m \cdot \langle x, \mbox{Id} \rangle : X \rightarrow Q$$
	is a $\Pow{V}$-algebra morphism, it follows that $n \cdot \langle q, \mbox{Id} \rangle$ is a $\Pow{V}$-algebra morphism too. We can do the same for $\langle  \mbox{Id}, q \rangle$, thus showing that $n$ is a bimorphism. This shows that there exists a unique arrow
	$ \tilde{n} : Q \Tensor{\Pow{V}} Q \rightarrow Q,$ in $\Alg{\Pow{V}}$ that makes the diagram
	\[
	\begin{tikzcd}
		Q \times Q \ar[r, "n"]  \ar[d]  & Q \\
		Q \Tensor{\Pow{V}} Q \ar[ur,"\tilde{n}", swap]
	\end{tikzcd}
	\]
	commute.\\
	In order to have an object of $\mathtt{Alg}(T_{+})$, we also need a $\Pow{V}$-algebra morphism
	$$ V \rightarrow Q.$$
	Since $X$ is in $\mathtt{Alg}(T_{+})$, we have $I_X :V \rightarrow X$; by taking the composite $I_Q :=  \pi \cdot I_X :  V \rightarrow X \rightarrow Q$ we get our desired arrow. In order to show that $\pi : X \rightarrow Q$ is in $\mathtt{Alg}(T_{+})$, we appeal to the following diagram
	\[
	\begin{tikzcd}
	(R \Tensor{\Pow{V}}  R) \amalg V  \arrow[r, shift left]
	\arrow[r, shift right] \arrow[d, "\tilde{r} \amalg I_R", swap]
	& (X \Tensor{\Pow{V}} X) \amalg V \arrow[r] \arrow[d, " \tilde{m} \amalg I_X", swap]  & (Q \Tensor{\Pow{V}} Q) \amalg V  \arrow[d, "\tilde{n} \amalg I_Q"] \\
	R \arrow[r, shift left]
	\arrow[r, shift right]
	& X \arrow[r]  & Q
	\end{tikzcd}
	\]
	whose commutativity follows from the universal property of bimorphisms and from Diagram \ref{Comm}. \par \medskip
  In order to conclude our proof we are left to show that $Q$ is the coequalizer of $R\rightrightarrows X$ in $\mathtt{Alg}(T_{+})$.\\
	Since we have already noticed that $Q$ is the coequalizer of $R\rightrightarrows X$ in $\Alg{V}$, for every (appropriate) arrow $g : X \rightarrow E$ in $\mathtt{Alg}(T_{+})$, we get a unique $\Pow{V}$-algebra morphism such that the following diagram
	\begin{center}
		\begin{tikzcd}
		R \arrow[r, shift left]
		\arrow[r, shift right]  & X \arrow[r, "\pi"] \arrow[d, "g", swap] & Q \arrow[ld, dashed, "f"] \\
		                                  & E
		\end{tikzcd}
	\end{center}
commutes.
In this way we get a unique morphism $f \Tensor{\Pow{V}} f : Q \Tensor{\Pow{V}} Q \rightarrow E \Tensor{\Pow{V}} E$. What we want to show is that $f : E \rightarrow Q$ is a morphism in $\mathtt{Alg}(T_{+})$, thus that the following diagram commutes, where the vertical arrows are the multiplication on $Q$ and $E$ respectively,
\[
\begin{tikzcd}
 Q \Tensor{\Pow{V}} Q \arrow[d, "\tilde{n}", swap] \arrow[r, "f \Tensor{\Pow{V}} f "] & E \Tensor{\Pow{V}} E \arrow[d, "\tilde{e}"] \\
 Q \arrow[r,"f"] & E
\end{tikzcd}
\]
which would then imply the commutativity of
\[
\begin{tikzcd}[row sep=large, column sep=huge]
(Q \Tensor{\Pow{V}} Q) \amalg V  \arrow[d] \arrow[r,"(f \Tensor{\Pow{V}} f) \amalg \Id "] & (E \Tensor{\Pow{V}} E) \amalg V \arrow[d] \\
Q \arrow[r, "f \amalg \Id "] & E.
\end{tikzcd}
\]
Since $R \times R \rightrightarrows X \times X \rightarrow Q \times Q$ is a coequalizer in $\Sets$, it follows that the following diagram commutes
\begin{equation}\label{Primo}
	\begin{tikzcd}
	Q \times Q \arrow[d, "n", swap] \arrow[r,"f \times f"] & E \times E \arrow[d, "e"] \\
	Q \arrow[r,"f"] & E
	\end{tikzcd}
\end{equation}
were $e : E \times E \rightarrow E$ is the unique bimorphism associates to $\tilde{e} :E \Tensor{\Pow{V}} E \rightarrow E $. Indeed, from the following commutative diagram
\begin{center}
	\begin{tikzcd}[row sep=10, column sep=3]
	& E \times E    \arrow[dd,  "e", near end, swap] & &   \\ X \times X \arrow[rr, crossing over, "\pi \times \pi", near end] \arrow[dd, "m", swap] \arrow[ur, "g \times g"] & & Q \times Q \arrow[ul,"f \times f", swap]\\
	& E  \\
	X\arrow[rr, "\pi"]  \arrow[ur, "g"] & & Q  \arrow[from=uu, crossing over, "q"] \arrow[ul, "f", swap] \\
	\end{tikzcd}
\end{center}
it follows that
$$e \cdot (f \times f) \cdot (\pi \times \pi) = f \cdot q \cdot (\pi \times \pi),$$
which implies that
$$ e \cdot (f \times f) = f \cdot q,$$
since $\pi \times \pi$ is an epi.\\
This allows us to conclude. Indeed, from the commutativity of Diagram \ref{Primo}, by using the universal property of bimorphisms, we can deduce the commutativity of the following diagram
\[
\begin{tikzcd}
 Q \Tensor{\Pow{V}} Q \arrow[d, "\tilde{n}", swap] \arrow[r, "f \Tensor{\Pow{V}} f "] & E \Tensor{\Pow{V}} E \arrow[d, "\tilde{e}"] \\
 Q \arrow[r,"f"] & E
\end{tikzcd}
\]
as required. This ends the proof of the lemma.\\
\end{proof}
\begin{prop}
	\label{MonMon}
	The functor $ W : \Mon{\Alg{V}}{\Pow{V}} \rightarrow \Sets$ is monadic.
\end{prop}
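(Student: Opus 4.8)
The plan is to invoke Beck's monadicity theorem in its precise form: $W$ is monadic as soon as it has a left adjoint, reflects isomorphisms, and the category $\Mon{\Alg{V}}{\Pow{V}}$ has, and $W$ preserves, coequalizers of $W$-split pairs. The left adjoint is already in hand, namely $\Pow{V} L$, so only the last two clauses require work, and the two preceding lemmas have been set up precisely to supply them.

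For the reflection of isomorphisms I would observe that $W$ factors as $\Mon{\Alg{V}}{\Pow{V}} \to \Alg{V} \to \Sets$, where the first functor forgets the monoid structure and the second is the monadic, hence isomorphism-reflecting, forgetful functor of $\Alg{V}$. If $W(f)$ is a bijection, then the underlying $\Alg{V}$-morphism of $f$ is an isomorphism; its $\Alg{V}$-inverse is automatically compatible with multiplication and unit, so $f$ is already invertible in $\Mon{\Alg{V}}{\Pow{V}}$.

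The heart of the argument is the coequalizer condition. Given a $W$-split pair $f, g : A \rightrightarrows B$ in $\Mon{\Alg{V}}{\Pow{V}}$, I would first regard it as a pair in $\mathtt{Alg}(T_{+})$ along the inclusion $\Mon{\Alg{V}}{\Pow{V}} \hookrightarrow \mathtt{Alg}(T_{+})$; since $\overline{W}$ restricts to $W$ on this subcategory, the pair is $\overline{W}$-split. By Lemma \ref{Coeq} the functor $\overline{W}$ then creates a coequalizer $\pi : B \to Q$ in $\mathtt{Alg}(T_{+})$, with $\overline{W}(\pi)$ the given split coequalizer of sets. Being a coequalizer, $\pi$ is an epimorphism in $\mathtt{Alg}(T_{+})$, and since its domain $B$ is a genuine monoid, Lemma \ref{Epi} forces $Q$ into $\Mon{\Alg{V}}{\Pow{V}}$ as well. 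To upgrade this to a coequalizer \emph{inside} $\Mon{\Alg{V}}{\Pow{V}}$ I would use that the latter is a full subcategory of $\mathtt{Alg}(T_{+})$ (a monoid homomorphism is exactly a $T_{+}$-morphism of the underlying $T_{+}$-algebras): any monoid morphism out of $B$ coequalizing $f, g$ is a $T_{+}$-morphism, hence factors uniquely through $\pi$ by a $T_{+}$-morphism $Q \to E$, which, $Q$ and $E$ both being monoids, is automatically a monoid morphism. Thus $\pi$ is the coequalizer of $f, g$ in $\Mon{\Alg{V}}{\Pow{V}}$, and $W = \overline{W}|_{\Mon{\Alg{V}}{\Pow{V}}}$ preserves it, so all of Beck's hypotheses hold.

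I expect the genuine obstacle to be the transition effected by Lemma \ref{Epi}: the set-level coequalizer $Q$ is a priori only an object of $\mathtt{Alg}(T_{+})$, and one must know that the induced multiplication and unit on $Q$ actually satisfy the associativity and unit equations, i.e.\ that $Q$ is a true monoid. This is exactly the closure of $\Mon{\Alg{V}}{\Pow{V}}$ under epimorphic quotients, and it is what makes the split coequalizer of underlying sets carry a legitimate monoid structure; everything else is bookkeeping around Beck's theorem.
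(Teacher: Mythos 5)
Your proposal is correct and follows essentially the same route as the paper's own proof: both verify Beck's criterion by feeding the $W$-split pair into Lemma \ref{Coeq} to create the coequalizer in $\mathtt{Alg}(T_{+})$, then use Lemma \ref{Epi} plus fullness of the inclusion $\Mon{\Alg{V}}{\Pow{V}} \hookrightarrow \mathtt{Alg}(T_{+})$ to land it back in the monoid category, with $W$ preserving it as the restriction of $\overline{W}$. The only difference is that you spell out the isomorphism-reflection step (via monadicity of $\Alg{V} \rightarrow \Sets$), which the paper dismisses as straightforward.
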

\begin{proof}
	Let $R \rightrightarrows X$ be a $W$-split pair in $\Mon{\Alg{V}}{\Pow{V}}.$ By Lemma \ref{Coeq} we know that there exists the coequalizer of following diagram in $\mathtt{Alg}(T_{+})$
	\begin{center}
	\begin{tikzcd}
		R \arrow[r, shift left]
		\arrow[r, shift right] & X \arrow[r, "e"] &  Q.
	\end{tikzcd}
	\end{center}
Since $e$ is an epi in $\mathtt{Alg}(T_{+})$, by Lemma \ref{Epi}, it follows that $Q$ is an object of $\Mon{\Alg{V}}{\Pow{V}}.$ Moreover, since $\Mon{\Alg{V}}{\Pow{V}} \rightarrow \mathtt{Alg}(T_{+})$ is fully faithful
$$ Q = \mathtt{Coeq}(R \rightrightarrows X).$$
Now, from $\restr{\overline{W}}{\Mon{\Alg{V}}{\Pow{V}} } = W$, it also follows that $Q$ is preserved by $W$.\\
The fact that $W$ reflects isomorphisms is straightforward.\\
\end{proof}
The last proposition shows that objects of $\Mon{\Alg{V}}{\Pow{V}}$ are algebras for the monad induced by the adjunction $\Pow{V} L \dashv W$. %By Lemma \ref{MonoAdj}, the right adjoint to $\Pow{V}$ is the forgetful functor $\Mon{\Alg{V}}{\Pow{V}} \rightarrow \mathtt{Mon}$ that forgets the $\Pow{V}$-structure; and also the list functor $L$ is the left adjoint to $\mathtt{Mon} \rightarrow \Sets$. Thus, in order to study better the monad arising from $\Pow{V} L \dashv W$, we can use the following lemma.
In order to study better this monad, we use the following remark.
\begin{re}\label{Comp}
  Suppose that we have two adjunctions
  $$ F' \dashv G' : C \leftrightarrows D \mbox{ and } F \dashv G : D \leftrightarrows E,$$
  with units and counits given by
  $$\eta' : \Id \Rightarrow G'F',  \ \ \eta: \Id \Rightarrow GF,$$
  $$ \epsilon' : F'G' \Rightarrow \Id, \ \ \epsilon : FG \Rightarrow \Id.$$
  Then $ F F' \dashv G'G$, with unit and counit given by
	$$\overline{\eta} =  G' \cdot \eta_{F'} \cdot \eta',$$
	$$\overline{\epsilon} =  \epsilon \cdot F(\epsilon').$$
\end{re}
If we apply Remark \ref{Comp} to the adjuction
$$\Pow{V} \dashv U : \mathtt{Mon} \leftrightarrows \Mon{\Alg{V}}{\Pow{V}},$$
whose unit and counit, at a monoid $(M, \cdot, 1_M)$ and at an object $(Q,\alpha, \mult_Q, k_Q)$ of $\Mon{\Alg{V}}{\Pow{V}}$, are
$$
	\eta_M : M \xrightarrow{\ulcorner \Delta_M \urcorner} \Pow{V}(M),
$$
(where $\ulcorner \Delta_M \urcorner$ is the transpose of the diagonal $\Delta_M : M \times M \rightarrow V$) and
$$
	\epsilon_Q : \Pow{V}(Q) \xrightarrow{\alpha}  Q,
$$
and to the adjuction
$$L  \dashv U' : \Sets \leftrightarrows \mathtt{Mon},$$
whose unit and counit, at a set $X$ and at a monoid $(M,\cdot, 1_M)$, are
$$
	\eta'_X : X  \rightarrow L(X), \mbox{ } x \mapsto (x),
$$
$$
	\epsilon'_{M} : L(M) \xrightarrow{}  M, \mbox{ } (m_1,...,m_n) \mapsto m_1 \cdot ... \cdot m_n,
$$
by applying Remark \ref{Comp}, we get that the unit and the counit of the adjunction $\Pow{V} L \dashv W$ are
$$
	\overline{\eta}_X : X  \rightarrow \Pow{V}(L(X)), \mbox{ } x \mapsto \ulcorner \Delta_X \urcorner((x)),
$$
$$
	\overline{\epsilon}_Q : \Pow{V}(L(Q)) \xrightarrow{\Pow{V}(\epsilon'_Q)}  \Pow{V}(Q) \xrightarrow{\alpha}  Q.
$$
Hence the monad structure on $\Pow{V} L$ is defined as
$$
	\eta_X = \overline{\eta}_X : X \rightarrow  \Pow{V}(L(X)), \mbox{ } x \mapsto \ulcorner \Delta_X \urcorner((x)),
$$
$$
	\mu_X =  W\overline{\epsilon}_{\Pow{V} L} : (\Pow{V}L)(\Pow{V}L)(X)\rightarrow \Pow{V}L(X).
$$

Let us decompose a little bit more the multiplication. First of all, we notice that the $\Pow{V}$-structure $\Pow{V}L(X)$ possesses is the multiplication of the enriched powerset monad $\Pow{V}$ at $L(X)$
$$n_X : \Pow{V}\Pow{V}(LX) \rightarrow \Pow{V}(LX), \ \  n_X(\Phi)(\underline{x}) =  \bigvee_{\phi \in V^{L(X)}} \Phi(\phi) \otimes \phi(\underline{x}).$$
Notice that we can write $n_X(\Phi)(\underline{x})$ as the relational composite of $\Phi$ viewed as a $V$-relation
$$ \Phi: \Pow{V}L(X) \xslashedrightarrow{} 1,$$
with the $V$-relation
$$\mathtt{ev}_{LX} : LX \xslashedrightarrow{} \Pow{V}L(X), \ \ \mathtt{ev}_{LX}(\underline{x}, \phi) =  \phi(\underline{x}).$$
Interestingly enough, we can also write $\Pow{V}(\epsilon'_{\Pow{V} L})$ as the composition of $V$-relations. Indeed, for an element $\psi \in \Pow{V}(LX)$ seen as a $V$-relation
$$\psi : L(X) \xslashedrightarrow{} 1,$$
we have that $\Pow{V}(\epsilon'_{\Pow{V} L})(\psi) =  \epsilon^{'\circ}_{\Pow{V} L} \dist \psi$.\\
In this way we can write the multiplication $\mu_X$ as the composite of
$$LX \xslashedrightarrow{\mathtt{ev}_{LX}} \Pow{V} L X \xslashedrightarrow{\epsilon_{\Pow{V}L}^{' \circ}} L \Pow{V} LX \xslashedrightarrow{(-)} 1.$$

In Section \ref{CocoMonad} we stated that
$$\Inj{V} \simeq \Sets^{\Pow{L}},$$
where $\Pow{L}$ is the $\Sets$-monad we obtained by composing the presheaf monad $\mathbb{D}_L : \Multi{V} \rightarrow \Multi{V}$ with the "discrete" functor $d : \Sets \rightarrow \Multi{V}$. We showed that its unit and multiplication at $X$ are given by
$$ e_X : X \rightarrow \Pow{L}(X), \mbox{ } x \mapsto \Yo{X}(x) = e^{\circ}_X(-,x),$$
$$ n_X =  - \ldist \Yo{\circledast} : \Pow{L}\Pow{L}(X) \rightarrow \Pow{L}(X).$$
A brief calculation shows that
$$ \Pow{L}(X) =  \Dist{(L,V)}(d(X),E) = \Mat{V}(L(X),1) =  \Pow{V}(L(X)).$$
Indeed, since the $(L,V)$-structure on $d(X)$ is $e^{\circ}_X$, we have
$$ - \ldist e^{\circ}_X =   - \dist L(e^{\circ}_X) \cdot m^{\circ}_X = - \dist L(m _X \cdot e_X ) = - \dist \Id,$$
which shows that every relation $L(d(X)) \xslashedrightarrow{}  1$ is a distributor.\\
Notice that $e^{\circ}_X(-,x) = \ulcorner \Delta_X \urcorner((x))$ and also that the two $V$-relations $\mathtt{ev}_{LX}$ and $\Yo{\circledast}$, by the Yoneda lemma, are the same.
\begin{prop}\label{QuantInj}
There is an equivalence of categories: \[\Mon{\Alg{V}}{\Pow{V}} \simeq \Inj{V}.\]
\end{prop}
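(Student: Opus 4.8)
The plan is to realise both categories as Eilenberg--Moore categories of monads on $\Sets$ and then show that these two monads are isomorphic, whence their categories of algebras agree. By Proposition \ref{MonMon} the functor $W : \Mon{\Alg{V}}{\Pow{V}} \rightarrow \Sets$ is monadic, so $\Mon{\Alg{V}}{\Pow{V}} \simeq \Sets^{T}$ where $T = \Pow{V} L$ is the monad induced by the adjunction $\Pow{V} L \dashv W$ whose unit and multiplication were computed above. On the other side, Section \ref{CocoMonad} records the equivalence $\Inj{V} \simeq \Sets^{\Pow{L}}$. Since isomorphic monads have (canonically equivalent) categories of algebras, it suffices to exhibit a monad isomorphism $T \cong \Pow{L}$, and then compose the resulting equivalence $\Sets^{T} \simeq \Sets^{\Pow{L}}$ with the two known ones.

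First I would check that the two underlying endofunctors coincide. The brief calculation above already gives, on objects, $\Pow{L}(X) = \Dist{(L,V)}(d(X),E) = \Mat{V}(L(X),1) = \Pow{V}(L(X))$; agreement on morphisms is then a routine unwinding, comparing $\Pow{V}(Lf)$ with $\Pow{L}(f) = - \ldist f^{\circledast}$ under this identification. Next I would match the units: the unit of $\Pow{L}$ sends $x$ to $\Yo{X}(x) = e^{\circ}_X(-,x)$, while the composite unit of $T$ sends $x$ to $\ulcorner \Delta_X \urcorner((x))$, and these agree because $e^{\circ}_X(-,x) = \ulcorner \Delta_X \urcorner((x))$.

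The crux, and where I expect the genuine work to sit, is the identification of the two multiplications. Both have already been rewritten as composites of $V$-relations: the $T$-multiplication $\mu_X$ appears as the composite $LX \xslashedrightarrow{\mathtt{ev}_{LX}} \Pow{V} L X \xslashedrightarrow{\epsilon_{\Pow{V}L}^{' \circ}} L \Pow{V} L X \xslashedrightarrow{(-)} 1$, whereas the $\Pow{L}$-multiplication is $n_X = - \ldist \Yo{\circledast}$. Unfolding the Kleisli-style composition $s \ldist r = s \dist Lr \dist m_X^{\circ}$ turns $n_X$ into a relational composite of exactly the same shape, built from $\Yo{\circledast}$ and $\epsilon_{\Pow{V}L}^{' \circ}$, so that the final identification reduces to the observation, noted above, that $\mathtt{ev}_{LX}$ and $\Yo{\circledast}$ are the same $V$-relation by the Yoneda lemma. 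Once compatibility with units and multiplications is verified, the data assemble into a monad isomorphism $T \cong \Pow{L}$; the induced equivalence $\Sets^{T} \simeq \Sets^{\Pow{L}}$ together with $\Mon{\Alg{V}}{\Pow{V}} \simeq \Sets^{T}$ and $\Inj{V} \simeq \Sets^{\Pow{L}}$ then yields $\Mon{\Alg{V}}{\Pow{V}} \simeq \Inj{V}$, as claimed.
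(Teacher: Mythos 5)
Your overall route is the paper's own: both categories are realised as Eilenberg--Moore categories over $\Sets$ (Proposition \ref{MonMon} and Section \ref{CocoMonad}), and everything reduces to an isomorphism of monads $\Pow{V}L \simeq \Pow{L}$; your identification of the underlying endofunctors and of the units is exactly what the paper does. The gap sits at the step you yourself call the crux. Unfolding the Kleisli-style composition gives
\[
n_X \;=\; - \ldist (\Yo{X})_{\circledast} \;=\; - \dist L(\Yo{X})_{\circledast} \dist m_X^{\circ},
\]
i.e.\ the composite $LX \xslashedrightarrow{m_X^{\circ}} LLX \xslashedrightarrow{L(\Yo{X})_{\circledast}} L\Pow{V}LX \xslashedrightarrow{} 1$: it involves $m_X^{\circ}$ and $L(\Yo{X})_{\circledast}$, and no occurrence of $\epsilon_{\Pow{V}L}^{' \circ}$ is produced by the unfolding. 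It is therefore \emph{not} ``of exactly the same shape'' as $\mu_X = - \dist \epsilon_{\Pow{V}L}^{' \circ} \dist \mathtt{ev}_{LX}$, which passes through $\Pow{V}LX$ rather than $LLX$. The Yoneda identification $\mathtt{ev}_{LX} = (\Yo{X})_{\circledast}$ only matches the first legs; what remains to be proved is precisely the commutativity in $\Mat{V}$ of the square (the paper's Diagram \ref{diagram2})
\[
\epsilon_{\Pow{V}L}^{' \circ} \dist (\Yo{X})_{\circledast} \;=\; L(\Yo{X})_{\circledast} \dist m_X^{\circ},
\]
and your proposal offers no argument for it --- asserting that the two composites already agree in shape is tantamount to assuming this equality.

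This square is where the actual content of the proposition lies, and it is not formal: $\epsilon'_{\Pow{V}L}$ is the counit of $L \dashv U'$ at the monoid $\Pow{V}L(X)$, so it multiplies out a list of presheaves using the convolution monoid structure that $\Pow{V}L(X)$ inherits from the monoidal structure of $\Alg{V}$ (via the classification of bimorphisms). The paper proves the square by first decomposing $\epsilon'_{\Pow{V}L}$ as $\Pow{V}(m_X) \cdot (\amalg \otimes^n)$, thereby splitting Diagram \ref{diagram2} into an upper square and a lower triangle, and then checking each of these by a pointwise computation using the Yoneda lemma and the definition of $\Pow{V}$; the remark following the proof gives a conceptual alternative for the upper square, using naturality of $\Yo{\circledast}$ in $\Dist{V}$ and uniqueness of adjoints to identify $\Pow{V}(m_X^{\circ})$ with $\Pow{V}(m_X)^{\circ}$. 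To complete your proof you must supply an argument of this kind for the displayed square; the remaining parts of your proposal are in order.
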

\begin{proof}
	If we prove that $\Pow{V}  L \simeq \Pow{L}$ as monads, i.e. in the sense of \cite{STREET1972149}, we are able to prove our proposition.\\
  As we noticed, the unit of $\Pow{V}  L $ and the unit of $\Pow{L}$ are the same. Hence, in order to conclude, we have to show that also the two multiplications are compatible.\\
  If we decompose the two multiplications, we have that pointwise they are defined as
	$$ LX \xslashedrightarrow{m_X^{\circ}} LLX \xslashedrightarrow{L\Yo{\circledast}} L\Pow{V}LX \xslashedrightarrow{(-)} 1 \quad \mbox{(multiplication of $\Pow{L}$)}$$
	and as
	$$LX \xslashedrightarrow{\mathtt{ev}_{LX}} \Pow{V} L X \xslashedrightarrow{\epsilon_{\Pow{V}L}^{' \circ}} L \Pow{V} LX \xslashedrightarrow{(-)} 1. \quad \mbox{(multiplication of $\Pow{V}L$)}$$
  Because in $\Mat{V} $ $\Yo{\circledast}$ is the same as $\mathtt{ev}_{LX}$, once we have shown that in $\Mat{V}$ the following diagram commutes
\begin{equation}\label{diagram2}
		\xymatrix{
			LX \ar[r]|-@{|}^{\Yo{\circledast}} \ar[d]|-@{|}_{m_X^{\circ}} & \Pow{V}LX \ar[d]|-@{|}^{\epsilon_{\Pow{V}L}^{' \circ}} \\
			LLX \ar[r]|-@{|}^{L\Yo{\circledast}}         & L\Pow{V}LX   }
\end{equation}
	we can conclude that the two monads are the same. \par \medskip
  Notice that the monoid structure on $\Pow{V}L(X)$ is defined as
	 \begin{alignat*}{2}
	\Pow{V}(LX)\times \Pow{V}(LX) &\longrightarrow \Pow{V}(LX \times LX)\longrightarrow \Pow{V}L(X)\\
	(\psi, \phi)&\longmapsto \psi \Tensor{} \phi \longmapsto m_2 \cdot (\psi \Tensor{} \phi )
	\end{alignat*}
	where
	\begin{alignat*}{2}
	m_2  : LX \times LX &\longrightarrow LX\\
	(\underline{x}, \underline{y}) &\longmapsto m_X(\underline{x}, \underline{y}).
	\end{alignat*}
	Hence it follows that $\epsilon'_{\Pow{V}L}$ is the composite
	$$L(\Pow{V}L(X)) \xslashedrightarrow{\amalg \otimes^n} \Pow{V}(LLX) \xrightarrow{\Pow{V}(m_X)} \Pow{V}L(X),$$
	where, for a list $ \underline{\phi} \in  L(\Pow{V}L(X))$, $(\amalg \Tensor{}^n)(\underline{\phi}) =  \phi_1 \Tensor{} ... \Tensor{} \phi_n$.\\
	In this way we can decompose Diagram \ref{diagram2} as follows
\[
		\xymatrix{
			LX \ar[r]|-@{|}^{\Yo{\circledast}} \ar[d]|-@{|}_{m_X^{\circ}} & \Pow{V}LX \ar[d]|-@{|}^{\Pow{V}(m_X)^{\circ}} \\
			LLX \ar[r]|-@{|}^{(\Yo{LX})_{\circledast}} \ar[dr]|-@{|}_{L\Yo{\circledast}} & \Pow{V}LLX \ar[d]|-@{|}^{(\amalg \Tensor{}^n)^{\circ}} \\
			& L\Pow{V}LX.   }
\]
We can easily prove that the two subdiagrams commute. Let $\underline{x} \in LX$ and $\phi \in \Pow{V}LX$, then we have
\begin{alignat*}{2}
\Pow{V}(m_X)^{\circ} \ldist \Yo{\circledast}(\underline{x}, \phi)  &= \Yo{\circledast}(\underline{x}, \Pow{V}(m_X)(\phi))  \\
&= \Pow{V}(m_X)(\phi)(\underline{x}) \\
&   \mbox{(from the Yoneda lemma)} \\
&= \bigvee_{ \{ \underline{\underline{x}}\in LLX \mbox{ | } m_X(\underline{\underline{x}}) = \underline{x}\}} \phi(\underline{\underline{x}}) \\
&  \mbox{(by definition of $\Pow{V}$)}  \\
&= \bigvee_{ \underline{\underline{x}}\in LLX} m_X^{\circ}(\underline{x}, \underline{\underline{x}}) \otimes \Yo{\circledast}(\underline{\underline{x}}, \phi) \\
& \mbox{(from the Yoneda lemma applied to $\phi(\underline{x}))$} \\
&= ((\Yo{LX})_{\circledast} \ldist m_X^{\circ}) (\underline{x}, \phi),
\end{alignat*}
which proves the commutativity of the upper square. If we fix $\underline{\underline{x}} \in LLX$ and $\underline{\phi} \in L\Pow{V}LX$, we have
\begin{eqnarray}
(\amalg \Tensor{}^n)^{\circ} \ldist (\Yo{LX})_{\circledast} (\underline{\underline{x}}, \underline{\phi}) &=& (\amalg \Tensor{}^n)(\underline{\phi})(\underline{\underline{x}} )  \nonumber \\
&=& \phi_1(\underline{x}_1) \otimes ... \otimes \phi_n(\underline{x}_n)   \nonumber \\
&=&L\Yo{\circledast} (\underline{\underline{x}}, \underline{\phi}), \nonumber
\end{eqnarray}
which proves the commutativity of the lower triangle and concludes the proof of the proposition.\\
\end{proof}
\begin{re}
 There is another interesting and conceptual way to prove that
 \[
 		\xymatrix{
 			LX \ar[r]|-@{|}^{\Yo{\circledast}} \ar[d]|-@{|}_{m_X^{\circ}} & \Pow{V}LX \ar[d]|-@{|}^{\Pow{V}(m_X)^{\circ}} \\
 			LLX \ar[r]|-@{|}^{(\Yo{LX})_{\circledast}} & \Pow{V}LLX }
 \]
commutes. Consider it as a diagram in $\Dist{V}$ with $LX$ seen as a discrete $V$-category and use the fact that $\Yo{\circledast}$ is the unit of a monad---hence a natural transformation in $\Dist{V}$. Notice that $\Pow{V}(m_X)^* = \Pow{V}(m_X)^{\circ}$ and, because $m_X \dashv m_X^*$ in $\Dist{V}$, we get that $\Pow{V}(m_X) \dashv \Pow{V}(m_X^*)$ and $\Pow{V}(m_X) \dashv \Pow{V}(m_X)^*$; by unicity of adjoints, it follows that $\Pow{V}(m_X^*) = \Pow{V}(m_X)^*$. In this way, from the commutativity of
\[
	 \xymatrix{
		 LLX  \ar[r]|-@{|}^{\Yo{\circledast}} \ar[d]|-@{|}_{m_X} & \Pow{V}LLX \ar[d]|-@{|}^{\Pow{V}(m_X)} \\
		 LX\ar[r]|-@{|}^{(\Yo{LX})_{\circledast}} & \Pow{V}LX }
\]
it follows the commutativity of the desired one, since
$$\Pow{V}(m_X^{\circ}) = \Pow{V}(m_X^*) = \Pow{V}(m_X)^* = \Pow{V}(m_X)^{\circ}.$$
\end{re}
\begin{cor}
	\label{cor}
	$\Inj{\mathbf{2}} \simeq \mathtt{Quant}.$
\end{cor}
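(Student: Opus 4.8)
The plan is to read off the corollary as the $V=\mathbf{2}$ instance of the chain of equivalences assembled so far, together with the elementary observation that, for the two-element quantale, both the coslice and the centrality constraint degenerate. No genuinely new computation is required; the entire content is the composition of equivalences already established.

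First I would assemble, for an arbitrary base quantale $V$, a single equivalence $\Inj{V} \simeq \Quant{Quant}$ out of three results proved above. Proposition~\ref{QuantInj} gives $\Inj{V} \simeq \Mon{\Alg{V}}{\Pow{V}}$; Proposition~\ref{MonBim} gives $\Mon{\Mod}{V} \simeq \Mon{\Alg{V}}{\Pow{V}}$; and Proposition~\ref{MonModQuant} gives $\Mon{\Mod}{V} \simeq \Quant{Quant}$. Since each is stated as an (unoriented) equivalence of categories, they compose without any matching-of-variances issue, yielding
\[
\Inj{V} \simeq \Mon{\Alg{V}}{\Pow{V}} \simeq \Mon{\Mod}{V} \simeq \Quant{Quant}.
\]

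Next I would specialize to $V = \mathbf{2}$ and identify $(\mathbf{2} \downarrow \mathtt{Quant})_{\spadesuit}$ with $\mathtt{Quant}$, as already indicated in the remark following Proposition~\ref{MonModQuant}. The key point is that $\mathbf{2}$ is the \emph{initial} object of $\mathtt{Quant}$: any quantale morphism $f : \mathbf{2} \to Q$ must send $\bot$ to $\bot_Q$ (preservation of the empty supremum) and the unit $k = \top$ to $k_Q$ (preservation of the unit), hence is uniquely determined; consequently the coslice $\mathbf{2} \downarrow \mathtt{Quant}$ is (equivalent to) $\mathtt{Quant}$ itself. Moreover the defining condition of the decoration $(\,\cdot\,)_{\spadesuit}$, namely $f(v) \mult_Q u = u \mult_Q f(v)$ for all $v \in \mathbf{2}$ and $u \in Q$, is automatically satisfied: for $v = \top$ one has $f(\top) = k_Q$, which is central, and for $v = \bot$ one has $f(\bot) = \bot_Q$, which absorbs on both sides because $\mult_Q$ preserves suprema in each variable. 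Thus the $\spadesuit$-restriction is vacuous and $(\mathbf{2} \downarrow \mathtt{Quant})_{\spadesuit} \simeq \mathtt{Quant}$. Combining this with the previous paragraph at $V = \mathbf{2}$ gives $\Inj{\mathbf{2}} \simeq \mathtt{Quant}$.

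I do not expect any real obstacle: the substantive work lives entirely in Propositions~\ref{QuantInj}, \ref{MonBim} and~\ref{MonModQuant}, and the corollary only requires their composition plus the two one-line facts about the two-element quantale (initiality in $\mathtt{Quant}$ and automatic centrality of the image of $\mathbf{2}$). The single point deserving a moment's care is confirming that the $\spadesuit$-condition is genuinely trivial for $V = \mathbf{2}$, so that the full coslice---and not a proper subcategory---is what appears; this is exactly the verification carried out above.
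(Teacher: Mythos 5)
Your proposal is correct, but it takes a different route from the paper. The paper's own proof of the corollary is a one-liner that never leaves Section 5: it invokes only Proposition~\ref{QuantInj} at $V=\mathbf{2}$ to get $\Inj{\mathbf{2}} \simeq \mathtt{Mon}(\Alg{\mathbf{2}}, \Tensor{\mathbf{2}}, \mathbf{2})$, then uses $\Alg{\mathbf{2}} \simeq \mathtt{Sup}$ and the fact that quantales are \emph{by definition} monoids in $\mathtt{Sup}$ --- no module categories, no coslice, no centrality condition. You instead compose the three equivalences (Propositions~\ref{QuantInj}, \ref{MonBim}, \ref{MonModQuant}) to obtain $\Inj{V} \simeq \Quant{Quant}$ for general $V$ --- which is essentially the chain the paper only assembles later, in the final theorem of the Conclusions --- and then collapse $(\mathbf{2} \downarrow \mathtt{Quant})_{\spadesuit}$ to $\mathtt{Quant}$ via the initiality of $\mathbf{2}$ in $\mathtt{Quant}$ and the vacuity of the centrality constraint; both of these specialization facts are sound (and are recorded in the paper's remark following Proposition~\ref{MonModQuant}, though without the initiality argument you spell out). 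What your route buys is uniformity: the corollary appears transparently as the $V=\mathbf{2}$ instance of the general theorem. What it costs is that it leans on the substantially heavier Propositions~\ref{MonBim} and~\ref{MonModQuant}, whereas the paper's argument shows the corollary is really a near-tautology given Proposition~\ref{QuantInj} alone, since at $V=\mathbf{2}$ the definition of quantale already does all the work.
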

\begin{proof}
	Since we have just proven that
	$$ \Inj{\mathbf{2}}\simeq \mathtt{Mon}(\Alg{\mathbf{2}}, \Tensor{\mathbf{2}},\mathbf{2}),$$
	from $\Alg{\mathbf{2}} \simeq \mathtt{Sup}$, and since---by definition---quantales are monoids in the category of suplattices, the result follows.\\
\end{proof}
\begin{re}
  In \cite{martinelli2020injective} the author gave another proof of the characterization of cocomplete multicategories exposed in Proposition \ref{QuantInj}. The main difference is the approach used; in \cite{martinelli2020injective}, the author obtained his result by using the machinery of $(L,V)$-colimits which are a generalization to the realm of $(L,V)$-categories of the notion of weighted colimits, while in Proposition \ref{QuantInj} we compared two monads. The advantage of the latter is that it gives a more manageable description of the category of algebras as a generalization to the enriched case of the notion of quantales. We must point out that the proof of Proposition \ref{QuantInj} came before \cite[Theorem~6.19]{martinelli2020injective} and it was the guiding principle that led to the proof of \cite[Theorem~6.19]{martinelli2020injective}.
\end{re}
\section{Conclusions}
We are now ready to conclude our tour de force and finally prove our desired result.
\begin{defin}
Let $\Mod(\mathtt{Quant})$ be the category whose objects are quantales $(Q,\mult, k_Q)$ equipped with an action $\rho : V \Tensor{2} Q  \rightarrow Q $ that is a monoid homomorphism and whose arrows are equivariant morphisms of quantales.
\end{defin}
\begin{re}\label{QEq}
  Notice that, to give an arrow $\rho : V \Tensor{2} Q \rightarrow Q$ in $\mathtt{Sup}$, is equivalent to give an arrow
  $$\rho' : V \Tensor{} Q\rightarrow Q$$
  in $\mathtt{Ord}$ that preserves suprema in each variable. Moreover, $\rho$ is an action iff $\rho'$ is an action. It is also true that $\rho$ is a monoid homomorphism iff $\rho'$ is a monoid homomorphism.
\end{re}
\begin{prop}\label{Last}
$\Mod(\mathtt{Quant}) \simeq \Quant{Quant}.$
\end{prop}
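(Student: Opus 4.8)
The plan is to exhibit a pair of mutually inverse functors directly, bypassing the monoidal machinery. Throughout I use Remark~\ref{QEq} to pass freely between an action $\rho : V \Tensor{2} Q \to Q$ in $\mathtt{Sup}$ and its uncurried form, written $\rho(v,u)$, which preserves suprema in each variable; recall that $V \Tensor{2} Q$ carries the product quantale structure $(v_1,u_1)(v_2,u_2) = (v_1 \otimes v_2,\, u_1 \mult_Q u_2)$ with unit $(k,k_Q)$. I define $G : \Quant{Quant} \to \Mod(\mathtt{Quant})$ on objects by sending a quantale morphism $f : V \to Q$ to the quantale $Q$ equipped with $\rho_f(v,u) := f(v) \mult_Q u$, and $F : \Mod(\mathtt{Quant}) \to \Quant{Quant}$ on objects by sending $(Q,\mult_Q,k_Q,\rho)$ to $f_\rho : V \to Q$, $v \mapsto \rho(v,k_Q)$; on arrows both act as the identity on the underlying map.

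First I would check that $G$ is well defined. That $\rho_f$ is an action is immediate from $f$ being a quantale morphism: $\rho_f(k,u) = f(k)\mult_Q u = u$, $\rho_f(v,\rho_f(w,u)) = f(v\otimes w)\mult_Q u = \rho_f(v\otimes w,u)$, and suprema are preserved because $f$ and $\mult_Q$ are. The substantial point is that $\rho_f$ is a \emph{monoid homomorphism} $V \Tensor{2} Q \to Q$. Comparing
$$\rho_f\big((v_1,u_1)(v_2,u_2)\big) = f(v_1)\mult_Q f(v_2)\mult_Q u_1 \mult_Q u_2$$
with
$$\rho_f(v_1,u_1)\mult_Q \rho_f(v_2,u_2) = f(v_1)\mult_Q u_1 \mult_Q f(v_2)\mult_Q u_2,$$
one sees the two agree precisely when $f(v_2)\mult_Q u_1 = u_1 \mult_Q f(v_2)$, i.e. exactly under the defining condition of $\Quant{Quant}$; this is where $\spadesuit$ enters. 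Conversely, feeding the values $v_1=k,\,u_2=k_Q$ into multiplicativity recovers $\spadesuit$, so the two conditions are genuinely equivalent.

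Next I would check that $F$ is well defined. Here $f_\rho$ preserves suprema and $f_\rho(k) = \rho(k,k_Q) = k_Q$ by the unit law of the action. Multiplicativity of $f_\rho$ and the condition $\spadesuit$ both follow by feeding suitable factorisations into the monoid-homomorphism identity for $\rho$ together with $\rho(k,u)=u$: decomposing $(v,u) = (v,k_Q)(k,u) = (k,u)(v,k_Q)$ in $V \Tensor{2} Q$ yields $\rho(v,k_Q)\mult_Q u = \rho(v,u) = u \mult_Q \rho(v,k_Q)$, which gives $\spadesuit$ for $f_\rho$, while $(v,k_Q)(w,k_Q)$ gives $f_\rho(v\otimes w) = f_\rho(v)\mult_Q f_\rho(w)$. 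The same computation $\rho(v,k_Q)\mult_Q u = \rho(v,u)$ shows $GF = \Id$ on objects, while $f_{\rho_f}(v)=f(v)\mult_Q k_Q = f(v)$ shows $FG = \Id$.

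For arrows, a quantale homomorphism $g : Q \to Q'$ is equivariant for $\rho_f,\rho_{f'}$ iff $g\cdot f = f'$: the identities $g(\rho_f(v,u)) = g(f(v))\mult_{Q'} g(u)$ and $\rho_{f'}(v,g(u)) = f'(v)\mult_{Q'} g(u)$ agree for all $u,v$ exactly when $g(f(v)) = f'(v)$ (using $g(k_Q)=k_{Q'}$ for the forward direction). Since $F$ and $G$ are the identity on underlying maps, this shows they restrict to a bijection on hom-sets and are mutually inverse, yielding the asserted equivalence (indeed an isomorphism of categories). The main obstacle is conceptual rather than computational: isolating that the seemingly ad~hoc condition $\spadesuit$ is needed neither for the action axioms nor for $f$ being a quantale morphism, but is \emph{precisely equivalent} to the curried action $\rho_f$ being multiplicative; verifying this equivalence in both directions is the heart of the proof, the rest being bookkeeping with the product quantale structure on $V \Tensor{2} Q$.
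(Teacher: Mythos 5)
Your proof is correct and follows essentially the same route as the paper: both construct the explicit pair of functors $f \mapsto (Q,\, f(-)\mult_Q =)$ and $(Q,\rho) \mapsto \rho(-,k_Q)$, with the condition of Definition~\ref{Spades} doing exactly the work of making the curried action multiplicative. Your write-up is in fact somewhat more complete than the paper's, which leaves the mutual-inverse and hom-set checks as ``easy calculations,'' and you correctly observe the equivalence is even an isomorphism of categories.
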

\begin{proof}
Let $ f : V \rightarrow (Q, \mult_Q,k_Q)$ be an object of $\Quant{Quant}$. Define the following function:
$$\rho'_f : V \Tensor{} Q \rightarrow Q, \quad (v,q) \mapsto f(v)\mult_Q q.$$
Because $f$ is a morphism of quantales and the multiplication of a quantale preserves suprema, it follows that $\rho'_f$ defines a unique arrow
$$\rho_f : V \Tensor{2} Q \rightarrow Q$$
in $\mathtt{Sup}$. It is straightforward to show that $\rho'_f$ is an action, hence $\rho_f$ is an action too.\\
By using the fact that $ f : V \rightarrow Q$ is an object of $\Quant{Quant}$, we can also prove that $\rho'_f$ is a monoid homomorphism. Indeed, let $v_1,v_2 \in V$ and $q_1,q_2 \in Q$, then we have
\begin{alignat*}{2}
\rho'_f((v_1,q_1)\mult_{V\Tensor{}Q} (v_2,q_2)) &= \rho'_f(v_1\otimes v_2, q_1 \mult_Q q_2) \\
&= f(v_1 \otimes v_2) \mult_Q q_1 \mult_Q q_2 \\
&= f(v_1) \mult_Q f(v_2) \mult_Q q_1 \mult_Q q_2 \\
&=f(v_1) \mult_Q q_1 \mult_Q f(v_2) \mult_Q q_2 \\
&=\rho'_f(v_1,x_1) \mult_{V\Tensor{}Q} \rho'_f(v_2,x_2).
\end{alignat*}
Thus $(Q,\rho)$ is an object of $\Mod(\mathtt{Quant})$. Let $h : Q \rightarrow W$, where $f: V \rightarrow Q$ and $g : V \rightarrow W$ are objects of $\Quant{Quant}$, be a morphism in $\Quant{Quant}$. It is straightforward to verify that $h : (Q, \rho_f) \rightarrow (W, \rho_g) $ is a morphism in $\Mod(\mathtt{Quant})$. Thus we have a functor
$$ F : \Quant{Quant} \rightarrow \Mod(\mathtt{Quant}).$$

Let $\rho : V \Tensor{2} (Q,\mult, k_Q) \rightarrow (Q,\mult, k_Q)$ be an object of $\Mod(\mathtt{Quant})$ and let $\rho' :  V \Tensor{} Q \rightarrow Q$ be as in Remark \ref{QEq}. Define the following morphism of quantales
$$f_{\rho'} : V \rightarrow Q, \quad v \mapsto \rho'(v,k_Q).$$
We have, for $q \in Q$, $v \in V$,
$$ f_{\rho'}(v)\mult_Q q = \rho'(v,k_Q) \mult_{V\Tensor{}Q}\rho'(k,q) = \rho'(v,q) = \rho'(k,q)\mult_{V\Tensor{}Q} \rho'(v,k_Q) = q \mult_Q f_{\rho'}(v).$$
If $h : (Q, \rho) \rightarrow (W,\theta)$ is an arrow in $\Mod(\mathtt{Quant})$, then $h\cdot f_{\rho'}= f_{\theta'}$. Thus we have a functor
$$G : \Mod(\mathtt{Quant}) \rightarrow  \Quant{Quant}.$$
Easy calculations show that $F$ and $G$ establish an equivalence between $\Mod(\mathtt{Quant})$ and $\Quant{Quant}$.\\
\end{proof}
\begin{re}
If $(X,\rho, \leq_X)$ is an object in $\Mod$, then the map $\rho(v, =) : X \rightarrow X$ defines a morphism in $\mathtt{Sup}$. Let $\rho : V \Tensor{} Q \rightarrow Q$ be an object of $\Mod(\mathtt{Quant})$. We might be tempted to see (or at least, the author was) if something similar holds. Unfortunately, $\rho(v,=) : Q \rightarrow Q$ does not define a morphism of quantales. Consider $q_1,q_2 \in Q$, then we would have
$$\rho(v,q_1 \mult_Q q_2)= \rho(v,q_1) \mult_Q \rho(v,q_2)$$
which in general is not true. In the previous proposition we showed that evey $\rho : V \Tensor{} Q \rightarrow Q$ is ``essentially'' of the form $f(-) \mult_Q =$, for an object $f :V \rightarrow Q$ of $\Quant{Quant}$. It is easy to see that
$$ \rho(v,q_1 \mult_Q q_2) := f(v)\mult_Q q_1 \mult_Q q_2, $$
in general is not equal to
$$\rho(v,q_1) \mult_Q \rho(v,q_2) := f(v)\mult_Q q_1 \mult f(v)\mult_Q q_2.$$
For example, one can take $Q = [0, \infty]^{\op}$, $V = [0, \infty]^{\op}$ and $f = \Id$.
\end{re}
The last proposition allows us to conclude:
\begin{teorema}
  $\Inj{V} \simeq  \Mod(\mathtt{Quant}).$
\end{teorema}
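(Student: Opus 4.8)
The plan is simply to assemble the chain of equivalences that the preceding sections have carefully established; the final theorem is then a formal consequence, since a composite of equivalences of categories is again an equivalence. No new construction is required at this last step — all the work has already been distributed across Sections 3--6.

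\medskip

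First I would invoke Proposition \ref{Last}, which identifies the target category with the coslice carrying the centrality condition,
$$\Mod(\mathtt{Quant}) \simeq \Quant{Quant}.$$
Next, Proposition \ref{MonModQuant} recasts this coslice as a category of monoids in $V$-modules,
$$\Quant{Quant} \simeq \Mon{\Mod}{V},$$
in accordance with the commutative-algebra analogy that monoids in the modules over a commutative base are precisely the (associative, unital) algebras over it, subject here to the centrality condition $f(v)\mult_Q u = u \mult_Q f(v)$. I would then transport this identification along the monoidal equivalence of Proposition \ref{MonBim},
$$\Mon{\Mod}{V} \simeq \Mon{\Alg{V}}{\Pow{V}},$$
which lifts the base equivalence $\Mod \simeq \Alg{V}$ to the respective categories of monoids. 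Finally, Proposition \ref{QuantInj} identifies the latter with the object of interest,
$$\Mon{\Alg{V}}{\Pow{V}} \simeq \Inj{V}.$$
Composing these four equivalences yields
$$\Inj{V} \simeq \Mon{\Alg{V}}{\Pow{V}} \simeq \Mon{\Mod}{V} \simeq \Quant{Quant} \simeq \Mod(\mathtt{Quant}),$$
which is the assertion.

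\medskip

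Because every link in the chain is an equivalence already proved, there is no genuine obstacle remaining at this final step — the conceptual weight has been discharged earlier. If I had to single out the load-bearing ingredient, it is Proposition \ref{QuantInj}: the identification of $\Pow{V}L$ with $\Pow{L}$ as monads on $\Sets$, established through the commuting square in $\Mat{V}$ relating $(\Yo{X})_{\circledast}$, $m_X^{\circ}$, and $\epsilon'_{\Pow{V}L}$, together with the monadicity of $W : \Mon{\Alg{V}}{\Pow{V}} \rightarrow \Sets$ from Proposition \ref{MonMon}. It is this step that ties the ``actions'' side of the story to the intrinsic presheaf-monad description of cocomplete separated $(L,V)$-categories, and hence makes the whole assembly possible.
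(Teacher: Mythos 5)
Your proposal is correct and coincides with the paper's own proof: it composes exactly the same four equivalences (Propositions \ref{QuantInj}, \ref{MonBim}, \ref{MonModQuant}, and \ref{Last}), merely traversed in the opposite order. The added commentary identifying Proposition \ref{QuantInj} as the load-bearing step is accurate but not needed for the argument itself.
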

\begin{proof}
We have the following chain of equivalences
\begin{align*}
\Inj{V} &\simeq \Mon{\Alg{V}}{\Pow{V}} & \mbox{(by Proposition \ref{QuantInj})} \\
 &\simeq \Mon{\Mod}{V} & \mbox{(by Proposition \ref{MonBim})} \\
 & \simeq \Quant{Quant} &\mbox{(by Proposition \ref{MonModQuant})} \\
 & \simeq \Mod(\mathtt{Quant}). & \mbox{(by Proposition \ref{Last})}
\end{align*}
\end{proof}
\section*{Acknowledgements}

I am grateful to D. Hofmann for valuable discussions about the content of the paper and to I. Stubbe and A. Balan for the valuable suggestions they gave me during their visits to Aveiro. \\
The author acknowledges partial financial assistance by the ERDF – European Regional Development Fund through the Operational Programme for Competitiveness and Internationalisation - COMPETE 2020 Programme and by National Funds through the Portuguese funding agency, FCT - Fundação para a Ciência e a Tecnologia, within project POCI-01-0145-FEDER-030947, and project UID/MAT/04106/2019 (CIDMA). The author is also supported by FCT grant PD/BD/128187/2016.
\appendix
\section{Appendix: Strong Commutative Monads}\label{AppendixI}
The main focus of this appendix is to have a space in which we put some results about \textit{strong monads} that are used across the paper and which are neither suitable for being put into the narrative of the paper, neither for being simply cited. \par\medskip
Strong monads and strong commutative monads were introduced by Anders Kock (see \cite{AKSM}) as a way to study better certain types of monoidal monads.\par\medskip
In his article (\cite{JACOBS199473}), Bart Jacobs used the results obtained by Kock to study under which conditions the category of algebras of a monad defined on a monoidal category becomes itself monoidal. He also studied which conditions are needed in order to "reflect" other properties the base monoidal category might have.\par\medskip
In this appendix we present the main results of \cite{JACOBS199473} and we apply them to $\Coco{\Vcats}$, which, as we saw in Theorem \ref{SetMon}, is equivalent to the category of algebras for the $V$-powerset monad $(\Pow{V}, u, n)$.\par\medskip
The structure of this appendix is as follows:
\begin{itemize}
  \item In the first section, based on \cite{EGTG}, we study as a motivating/toy example, the monoidal structure $\mathtt{Sup}$ possesses;
  \item In the second section, following \cite{JACOBS199473}, we introduce strong commutative monads along the main results concerning them;
  \item In the last section we apply the results of the second section to the $V$-powerset monad $(\Pow{V}, u, n)$.
\end{itemize}
\section{Some Words about $\mathtt{Sup}$}
We recall that the powerset functor is defined by $\Pow{2}(X)=  2^X$, and by
$$ \Pow{2} : \Sets \rightarrow \Sets, \mbox{ } X \rightarrow Y \mapsto \Pow{2}f : \Pow{2}(X) \rightarrow \Pow{2}(Y), $$
where $\Pow{2}f ( A) = \{ f(x) \mbox{ such that } x\in A\}.$\par\medskip
It is well known that it is part of a monad $(\Pow{2}, u, n)$, called the powerset monad, where:
\begin{itemize}
  \item The unit at $X$ is given by $u_x : X \rightarrow \Pow{2}(X), \mbox{ } x \rightarrow \{x\};$
  \item The multiplication at $X$, $n_X : \Pow{2}(\Pow{2}(X)) \rightarrow \Pow{2}(X)$ is defined by $$n_X(\mathcal{A}) = \bigcup \{A \in \mathcal{A}\}.$$
\end{itemize}
It is straightforward to see that algebras for this monad are suplattices, where the free functor ${\Pow{2} :  \Sets \rightarrow \mathtt{Sup}}$ sends a set $X$ to $(\Pow{2}(X), \subseteq)$.\par\medskip
Let $X$ and $Y$ be suplattices. It is possible to form their tensor product $X \Tensor{2}Y$, defined as follows
$$X \Tensor{2}Y = \{C \in \Pow{}(X \times Y)\mbox{ | } \forall A \in \Pow{}(X),\forall B \in \Pow{}(Y), A\times B \subseteq C \iff (\bigvee A,\bigvee B) \in C \},$$
with the order structure induced by the one $P(X \times Y)$ has.\par\medskip
Moreover, $- \Tensor{2} =$ defines a symmetric closed monoidal structure on $\mathtt{Sup}$ with unit $\mathbf{2} \simeq \Pow{2}(1)$. With respect to this monoidal structure, since for $X, Y$, $$\Pow{2}(X)\Tensor{2}\Pow{}(Y) \simeq \Pow{2}(X \times Y),$$ the free functor $ \Pow{2} : \Sets \rightarrow \mathtt{Sup}$ becomes strong monoidal.\par\medskip
The tensor product just defined has another interesting property: it classifies bimorphisms. Let $X$, $Y$ and $Z$ be in $\mathtt{Sup}$, a bimorphism $f : X \boxtimes Y \rightarrow Z$ is a function such that, for all $x, y \in X,Y$,
$$f_x : Y \rightarrow Z, \mbox{ } y \mapsto f(x,y),$$
$$ f_y : X \rightarrow Z, \mbox{ } x \mapsto f(x,y),$$
are both suprema preserving maps.\\
This defines, for every $X,Y \in \mathtt{Sup}$, a functor
$$\Bim(X \boxtimes Y, = ) : \mathtt{Sup} \rightarrow \Sets,$$
where $\Bim(X\boxtimes Y, Z )$ denotes the sets of bimorphisms from $X\boxtimes Y$ to $Z$. The fact that $\Tensor{2}$ classifies bimorphisms means that there exists, for all $X,Y,Z \in \mathtt{Sup}$, a natural bijection
$$ \Bim( X\boxtimes Y, Z ) \simeq \mathtt{Sup}(X\Tensor{2} Y, Z)$$
which is realized by a universal bimorphism
$$ \pi : X\boxtimes Y \rightarrow X \Tensor{2} Y.$$
That is to say, for every bimorphism $f :  X \boxtimes Y \rightarrow Z$, there exists a unique suprema preserving map ${\overline{f} : X \Tensor{2} Y \rightarrow Z}$ that makes the following diagram
\[
\begin{tikzcd}
  X\boxtimes Y \ar[r, "f"] \ar[d, "\pi", swap] & Z \\
  X \Tensor{2} Y \ar[ur, "\overline{f}", dashed, swap]
\end{tikzcd}
\]
commute.\\
Before we conclude this section we must point out that that the tensor product $X \Tensor{2}Y$ can be also expressed as the coequalizer of the following parallel pair of arrows
$$ \Pow{2}(\Pow{2}(X) \times \Pow{2}(Y)) \xrightarrow{\Pow{2}t} \Pow{2}^2(X  \times Y )\xrightarrow{n} \Pow{2}(X  \times Y),$$
$$\Pow{2}(\Pow{2}(X) \times \Pow{2}(Y))\xrightarrow{\Pow{2}(\alpha \times \beta)} \Pow{2}(X  \times Y).$$
Here we have
$$ t : \Pow{2}(X) \times \Pow{2}(Y) \rightarrow \Pow{2}(X  \times Y ), \mbox{ } (A,B) \mapsto A \times B,$$
and
$$\alpha \times \beta : \Pow{2}(X  \times Y) \rightarrow X \times Y,  \mbox{ } (A \times B) \mapsto (\alpha(A), \beta(B)),$$
where $\alpha$ and $\beta$ are the algebras structures on $X$ and $Y$ respectively.\\
In the next section we will see how crucial the existence of an arrow like $t$ is in order to build the tensor product of algebras.
\section{Strong Commutative Monads}
\begin{defin}
Let $(C, \otimes, 1)$ be a monoidal category and $(T,e,m)$ be a monad with $T: C \rightarrow C$. The monad $(T,e,m)$ is called \textit{strong} if it is equipped with a natural transformation, called \textit{strenght}, with components
$$ \mathtt{st}_{X,Y}  : X \otimes TY \rightarrow T(X \otimes Y),$$
such that the following diagrams commute
\[
\begin{tikzcd}
&  TY &  \\
1 \otimes TY \ar[ur, "\simeq"] \ar[rr, "\mathtt{st}_{1,Y}"]&  & T(1 \otimes Y) \ar[ul, "\simeq", swap]
\end{tikzcd}
\]

\[
\begin{tikzcd}[row sep=large, column sep=large]
 (X \otimes Y) \otimes TZ \ar[rr, "\mathtt{st}_{X \otimes Y,Z}"] \ar[d, "\simeq"] &  & T((X \otimes Y) \otimes Z) \ar[d, "\simeq"] \\
 X \otimes (Y \otimes TZ)\ar[r, "\Id \otimes \mathtt{st}_{Y,Z}"] & X \otimes T(Y \otimes Z) \ar[r, "\mathtt{st}_{X,Y \otimes Z}"] & T(X \otimes (Y \otimes Z))
\end{tikzcd}
\]

\[
\begin{tikzcd}
&  X \otimes Y \ar[dr, "e_{X \otimes Y}"]  \ar[dl, "\Id \otimes e_Y",swap] &  \\
X \otimes TY  \ar[rr, "\mathtt{st}_{X,Y}"]&  & T(X \otimes Y)
\end{tikzcd}
\]

\[
\begin{tikzcd}[row sep=large, column sep=large]
  X \otimes T^2Y \ar[r, "\mathtt{st}_{X,TY}"] \ar[d, "\Id \otimes m_Y", swap] & T(X \otimes TY )  \ar[r, "T\mathtt{st}_{X,Y}"] & T^2(X \otimes Y) \ar[d, "m_{X \otimes Y}"], \\
  X \otimes TY \ar[rr, "\mathtt{st}_{X,Y}"] &  & T(X \otimes Y).
\end{tikzcd}
\]
\end{defin}

Suppose that $(C, \otimes, 1)$ is a symmetric monoidal category and let $(T,e,m)$ be a strong monad. Call $\gamma_{X,Y}: X \otimes Y \xrightarrow{\simeq } Y \otimes X$ the braiding. Then we define a \textit{co-strenght} as
$$ \mathtt{st}'_{X,Y} : TX \otimes Y \xrightarrow{\gamma_{TX,Y}} Y \otimes TX \xrightarrow{\mathtt{st}_{Y,X}} T(Y \otimes X) \xrightarrow{T\gamma_{X,Y}} T(X \otimes Y).$$
\begin{defin}
  Let $(C, \otimes, 1)$ be a symmetric monoidal category and let $(T,e,m)$ be a strong monad defined on it. The monad $(T,e,m)$ is called \textit{commutative} if the following diagram commutes
  \[
  \begin{tikzcd}[row sep=large, column sep=large]
    TX\otimes TY \ar[r, "\mathtt{st}_{TX,Y}"] \ar[d,"\mathtt{st}'_{X,TY}", swap] & T(TX \otimes Y) \ar[r, "T\mathtt{st}'_{X,Y}"] & T^2(X \otimes Y) \ar[d, "m_{X\otimes Y}"] \\
    T(X \otimes TY)  \ar[r, "T\mathtt{st}_{X,Y}"] & T^2(X \otimes Y) \ar[r, "m_{X\otimes Y}"] & T^2(X \otimes Y).
  \end{tikzcd}
  \]
  In this case we define a natural transformation, called \textit{double strenght}, with components
  $$ \mathtt{dst}_{X,Y} : TX\otimes TY \xrightarrow{\mathtt{st}_{TX,Y}} T(TX \otimes Y) \xrightarrow{T\mathtt{st}'_{X,Y}} T^2(X \otimes Y) \xrightarrow{m_{X\otimes Y}} T(X \otimes Y),$$
  or equivalently,
  $$\mathtt{dst}'_{X,Y}:  TX\otimes TY \xrightarrow{\mathtt{st}'_{X,TY}}   T(X \otimes TY)  \xrightarrow{T\mathtt{st}_{X,Y}} T^2(X \otimes Y) \xrightarrow{m_{X\otimes Y}} T(X \otimes Y).$$
\end{defin}
\begin{re}
  Notice that the map
  $$ t : \Pow{2}(X) \times \Pow{2}(Y) \rightarrow \Pow{2}(X  \times Y ), \mbox{ } (A,B) \mapsto A \times B,$$
  we defined in the previous section, is the double strenght of the following:
  $$ \mathtt{st}_{X,Y} : X \times \Pow{2}(Y) \rightarrow \Pow{2}(X \times Y), \mbox{ } (x, B) \mapsto \{x\} \times B.$$
\end{re}
\begin{defin}\label{BimDef}
Let $(C, \otimes, 1)$ be a symmetric monoidal category and let $(T,e,m)$  be a strong monad defined on it. Suppose $(X,\alpha), (Y, \beta), (Z,\gamma)$ are $T$-algebras. An arrow (in $C$) $f: X \otimes Y \rightarrow Z$ is called a \textit{bimorphism} if the following diagrams commute
\[
\begin{tikzcd}
  X \otimes TY \ar[d, "\Id \otimes \beta", swap] \ar[r, "\mathtt{st}_{X,Y}"] & T(X \otimes Y) \ar[r, "Tf"] & TZ \ar[d, "\gamma"] \\
  X \otimes Y \ar[rr, "f"] & & Z
\end{tikzcd}
\qquad
\begin{tikzcd}
  TX \otimes Y \ar[d, "\alpha \otimes \Id", swap] \ar[r, "\mathtt{st}'_{X,Y}"] & T(X \otimes Y) \ar[r, "Tf"] & TZ \ar[d, "\gamma"] \\
  X \otimes Y \ar[rr, "f"] & & Z.
\end{tikzcd}
\]
In this way, for all $X,Y \in C^T$, we define a functor
$$\Bim(X\otimes Y, = ) : C^T  \rightarrow \Sets,$$
where $\Bim(X \otimes Y, Z )$ denotes the sets of bimorphisms from $X\otimes Y$ to $Z$.
\end{defin}
The main result about strong monads we are interested in is contained in the following theorem.
\begin{teorema}\label{SCMon}\cite[Lemmas~5.1-5.3]{JACOBS199473}
Let $(T,e,m)$ be a strong monad on a symmetric monoidal category $(C,\otimes, 1)$ such that its associated category of algebras $C^T$ has coequalizers of reflexive pairs. Then, for each algebras $(X,\alpha)$, $(Y,\beta)$, $\Bim(X\otimes Y, = )$ is representable by an algebra $(X\otimes_T Y, \alpha \otimes_T \beta)$.\par\medskip
If additionally $(T,e,m)$ is commutative, then $C^T$ becomes a symmetric monoidal category with $\otimes_T$ as tensor product and with the free algebra $(T1 , m_1 )$ as the unit; moreover, the free functor $F : C \rightarrow C^T$ becomes strong monoidal. If $C$ has equalizers and its monoidal structure is closed, then also $C^T$ becomes a closed monoidal category.
\end{teorema}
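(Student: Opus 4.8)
The statement is Jacobs' theorem, and the plan is to prove its three clauses in order: first construct the representing object $X \otimes_T Y$ as a reflexive coequalizer, then promote representability to a symmetric monoidal structure using commutativity, and finally produce the internal hom from an equalizer in $C$. First I would define $X \otimes_T Y$ as the coequalizer in $C^T$ of the pair
$$ F(TX \otimes TY) \rightrightarrows F(X \otimes Y), $$
where $F \dashv U$ is the free--forgetful adjunction, the first arrow is $T(\alpha \otimes \beta)$ and the second is $m_{X \otimes Y} \circ T(\mathtt{dst}_{X,Y})$. These two maps share the common section $F(e_X \otimes e_Y)$: for the first this is the algebra unit law $\alpha \circ e_X = \Id$, and for the second it is the strength identity $\mathtt{dst}_{X,Y} \circ (e_X \otimes e_Y) = e_{X \otimes Y}$ followed by the monad unit law. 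Hence the pair is reflexive and the coequalizer $q : F(X \otimes Y) \to X \otimes_T Y$ exists by hypothesis, with universal bimorphism $\pi := q \circ e_{X \otimes Y}$. The crux (Jacobs, Lemma 5.1) is that, transporting a $C^T$-morphism $g : F(X \otimes Y) \to (Z,\zeta)$ across the adjunction, the condition that $g$ coequalizes the pair is \emph{exactly} the conjunction of the two bimorphism diagrams of Definition \ref{BimDef}; this is the only place where the strength axioms do real work, and it yields the natural bijection $C^T(X \otimes_T Y, Z) \simeq \Bim(X \otimes Y, Z)$.

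Assuming $T$ commutative, I would read off every structural isomorphism from the Yoneda lemma by exhibiting both sides as representing the same functor. Symmetry $X \otimes_T Y \simeq Y \otimes_T X$ comes from the braiding $\gamma$ of $C$ together with commutativity, which forces $\mathtt{dst} = \mathtt{dst}'$ and so makes the defining coequalizer symmetric in its two inputs. For the unit I would show $F(1) \otimes_T (Z,\zeta) \simeq (Z,\zeta)$: bimorphisms out of $1 \otimes Z$ correspond, through the left unitor of $C$ and the strength, to algebra morphisms out of $(Z,\zeta)$, so $(T1,m_1) = F(1)$ is a two-sided unit. The associator is induced by the associator of $\otimes$ in $C$ once one checks that $(X \otimes_T Y) \otimes_T Z$ and $X \otimes_T (Y \otimes_T Z)$ both co-represent trimorphisms out of $X \otimes Y \otimes Z$; the pentagon, triangle and hexagon then descend from the coherence of $(C,\otimes,1)$, since each coherence cell is detected after precomposition with the universal multimorphism. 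Strong monoidality of $F$ is the special case $F(X) \otimes_T F(Y) \simeq F(X \otimes Y)$: bimorphisms out of $TX \otimes TY$ correspond, via the adjunction applied twice and the strength, to $C$-maps $X \otimes Y \to UZ$, i.e. to $C^T(F(X \otimes Y), Z)$, while $F(1) = (T1,m_1)$ is the unit by construction.

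If $C$ is moreover closed with internal hom $[-,=]$ and has equalizers, I would build the internal hom of $C^T$ as the equalizer in $C$ of the two maps $[Y,Z] \rightrightarrows [TY,Z]$, one given by precomposition with $\beta : TY \to Y$ and the other by postcomposition with $\zeta$ after the internalized functorial action $[Y,Z] \to [TY,TZ]$ of the strong monad. This equalizer exists by hypothesis and internalizes the object of algebra morphisms $Y \to Z$; it inherits a $T$-algebra structure from the one that $[Y,Z]$ carries by virtue of $\zeta$ and the strength. The closed adjunction $C^T(X \otimes_T Y, Z) \simeq C^T(X, [Y,Z]_T)$ then follows by transporting a bimorphism $X \otimes Y \to Z$ across the closedness of $C$ to a map $X \to [Y,Z]$: linearity in $Y$ says it factors through the equalizer $[Y,Z]_T$, and linearity in $X$ says the factorization is an algebra morphism.

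I expect the genuinely delicate points to be, first, the identification in the representability step of the coequalizing condition with the two bimorphism diagrams, where all of the strength and commutativity bookkeeping is concentrated, and second, in the closed case, the verification that $[Y,Z]$ really carries a $T$-algebra structure and that the closed adjunction of $C$ restricts correctly through the equalizer. By contrast, the coherence axioms, although numerous, follow formally once everything is phrased through the universal property of $\pi$.
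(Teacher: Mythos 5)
Your proposal is correct and takes essentially the same route as the paper's source: the paper states this theorem by citing Jacobs' Lemmas 5.1--5.3 rather than proving it, and the remark immediately following the theorem records exactly your reflexive coequalizer of $T(\alpha \otimes \beta)$ and $m_{X\otimes Y}\cdot T(\mathtt{dst}_{X,Y})$ (with common section $T(e_X\otimes e_Y)$) as the construction of $\otimes_T$. Your three steps---identifying the coequalizing condition with the two bimorphism diagrams via the adjunction, obtaining unit, symmetry and associativity by representability of (multi)morphisms, and building the internal hom as the equalizer $[Y,Z]\rightrightarrows[TY,Z]$---are precisely Jacobs' argument, so nothing needs to be added.
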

\begin{re}
Let $(X,\alpha)$ and $(Y,\beta)$ be $T$-algebras. Their tensor product is the coequalizer of the following parallel pair of arrows
$$ T(T(X) \otimes T(Y)) \xrightarrow{T\mathtt{dst}_{X,Y}} T^2(X  \otimes Y )\xrightarrow{m} T(X  \otimes Y),$$
$$T(T(X) \otimes T(Y))\xrightarrow{T(\alpha \otimes \beta)} T(X  \otimes Y).$$
Note that this is exactly how we defined the tensor product of complete lattices.
\end{re}
\begin{re}
	Let $(T,m,e)$ be a monad with $T : \Sets \rightarrow \Sets$. Then, if we assume the axiom of choice, $\Sets^T$ is cocomplete (see \cite{Lin69, ABB+69}). Thus, Eilenberg-Moore categories for strong monads defined on $\Sets$ always satisfy the hypotesis of Theorem \ref{SCMon}.
\end{re}
\begin{re}\label{Unitor}
We obtain the associator in $C^T$ from the one in $C$ by using the universal property of bimorphisms. Similarly, we can obtain the unitors in $C^T$ by using the tensorial strength. As an example, the left unitor at an object $(X,\alpha)$ is the arrow associated to the bimorphism
$$ T1 \otimes X \xrightarrow{st'_{1,X}} T(1\otimes X) \xrightarrow{\simeq} T(X) \xrightarrow{\alpha} X.$$
\end{re}
\section{Applications}
The $V$-powerset monad $(\Pow{V}, u, n)$ is the enriched generalization of the classical powerset monad, where we define $\Pow{V} : \Sets \rightarrow \Sets$ by putting $\Pow{V}(X) =  V^X$ and, for $f: X \rightarrow Y$ and $\phi \in V^X$
$$ \Pow{V}(f)(\phi)(y) =  \bigvee_{x \in f^{-1}(y) }\phi(x).$$
Moreover:
\begin{itemize}
	\item $u_X : X \rightarrow V^X$ is the transpose of the diagonal $\bigtriangleup_X : X \times X \rightarrow V$;
	\item $n_X : \Pow{V}(\Pow{V}(X)) \rightarrow \Pow{V}(X)$ is defined by $n_X(\Phi)(x) =  \bigvee_{\phi \in V^X} \Phi(\phi) \otimes \phi(x)$.
\end{itemize}
Consider $\Sets$ as a monoidal category in the usual way, that is to say, with its cartesian structure and consider the following function
$$\mathtt{st}_{X,Y} : X \times \Pow{V}Y \rightarrow  \Pow{V}(X \times Y), \mbox{ } (x,\phi) \mapsto (u_X(x)\boxtimes \phi),$$
where $(u_X(x)\boxtimes \phi)(\tilde{x},y) =  u_X(x)(\tilde{x}) \otimes \phi(y).$\par \medskip
Long and boring computations that someone must do, show that this makes $(\Pow{V}, u, n)$ into a strong monad. Indeed, the commutativity of
\[
\begin{tikzcd}
&  \Pow{V}Y &  \\
1 \times \Pow{V}Y \ar[ur, "\simeq"] \ar[rr, "\mathtt{st}_{1,Y}"]&  & \Pow{V}(X \times Y) \ar[ul, "\simeq", swap]
\end{tikzcd}
\]
is straightforward. While, from $u_{X\times Y} \simeq u_X \boxtimes u_Y$, it follows that the diagram
\[
\begin{tikzcd}[row sep=large, column sep=large]
 (X \times Y) \times Z \ar[rr, "\mathtt{st}_{X \times Y,Z}"] \ar[d, "\simeq"] &  & \Pow{V}((X \times Y) \times Z) \ar[d, "\simeq"] \\
 X \times (Y \times \Pow{V}Z)\ar[r, "\Id \times \mathtt{st}_{Y,Z}"] & X \times \Pow{V}(Y \times Z) \ar[r, "\mathtt{st}_{X,Y \times Z}"] & \Pow{V}(X \times (Y \times Z))
\end{tikzcd}
\]
commutes, since
\[
\begin{tikzcd}
((x,y),\psi) \ar[rr, mapsto] \ar[d,  mapsto] &  &  u_{X \times Y} \boxtimes \psi \ar[d,  mapsto] \\
  (x,(y,\psi)) \ar[r,  mapsto] & (x,(u_Y(y) \boxtimes \psi) \ar[r,  mapsto] & u_X(x) \boxtimes u_Y(y) \boxtimes \psi.
\end{tikzcd}
\]
In the same way it follows that the diagram
\[
\begin{tikzcd}
&  X \times Y \ar[dr, "u_{X \times Y}"]  \ar[dl, "\Id \times u_Y", swap] &  \\
X \times \Pow{V}Y  \ar[rr, "\mathtt{st}_{X,Y}"]&  & \Pow{V}(X \times Y)
\end{tikzcd}
\]
commutes, since
\[
\begin{tikzcd}
&  (x,y) \ar[dr, mapsto]  \ar[dl, mapsto] &  \\
(x,u_Y(y))  \ar[rr, mapsto]&  & u_X \boxtimes u_Y.
\end{tikzcd}
\]
To verify the commutativity of
\[
\begin{tikzcd}[row sep=large, column sep=large]
  X \times \Pow{V}^2Y \ar[r, "\mathtt{st}_{X,\Pow{V}Y}"] \ar[d, "\Id \times n_Y", swap] & \Pow{V}(X \times TY )  \ar[r, "\Pow{V}\mathtt{st}_{X,Y}"] & \Pow{V}^2(X \times Y) \ar[d, "n_{X \times Y}"], \\
  X \times \Pow{V}Y \ar[rr, "\mathtt{st}_{X,Y}"] &  & \Pow{V}(X \times Y)
\end{tikzcd}
\]
requires a little bit of effort. We have to show that
\[
\begin{tikzcd}
  (x, \Psi) \ar[r, mapsto] \ar[d, mapsto] &  u_X(x) \boxtimes \Psi \ar[r, mapsto] & \Pow{V}\mathtt{st}_{X,Y}(u_X(x) \boxtimes \Psi)  \ar[d, mapsto], \\
  (x, n_Y(\Psi)) \ar[rr, mapsto] &  &  u_X(x) \boxtimes n_Y(\Psi) = n_{X\times Y}(\Pow{V}\mathtt{st}_{X,Y}(u_X(x) \boxtimes \Psi) ).
\end{tikzcd}
\]
Here $ u_X(x) \boxtimes n_Y(\Psi) = n_{X\times Y}(\Pow{V}\mathtt{st}_{X,Y}(u_X(x) \boxtimes \Psi) )$ follows by us unravelling the definitions of $\Pow{V}\mathtt{st}_{X,Y}$ and $n$, and by noticing that $\mathtt{st}_{X,Y} \cdot \mathtt{ev}_{X \times Y} = \Delta_X \boxtimes  \mathtt{ev}_Y$, as we have
\begin{alignat*}{2}
  \mathtt{st}_{X,Y} \cdot \mathtt{ev}_{X \times Y}((\tilde{x},\psi), (x,y)) & = \mathtt{ev}_{X \times Y}(u_X(\tilde{x}) \boxtimes \psi,(x,y) ) \\
  & = u_X(\tilde{x})(x) \boxtimes \psi(y) \\
  & = \Delta_X (\tilde{x}, x)\boxtimes  \mathtt{ev}_Y(\psi,y) \\
  & = \Delta_X \boxtimes  \mathtt{ev}_Y((\tilde{x},\psi), (x,y)).
\end{alignat*}
\begin{re}
  Notice that the strongness of $(\Pow{V}, u, n)$ follows from the fact that every functor $F : \Sets \rightarrow \Sets$ is a $\Sets$-functor (where the monoidal structure on $\Sets$ is the usual one), and from the fact that to give a $\Sets$-enrichment, for a $\Sets$-monad $(T,e,m)$, is equivalent to give a strenght (see Propositions $1.1,1.2$ of \cite{AKSM}).\\
  We preferred to give an explicit treatment since the calculations involved, despite being boring, were not too long and complicated.
\end{re}

Moreover, since we always assume our base quantale $V$ to be commutative, it is easy to show that $(\Pow{V}, u, n)$ is also commutative, with the co-strenght $\mathtt{st}'$ given by
$$\mathtt{st'}_{X,Y} : \Pow{V}X\times Y \rightarrow \Pow{V}(X \times Y), \mbox{ } (\psi, y) \mapsto \psi \boxtimes u_Y.$$
Indeed we have that the diagram
\[
\begin{tikzcd}
  \Pow{V}X\times \Pow{V}Y \ar[r, "\mathtt{st}_{\Pow{V}X,Y}"] \ar[d,"\mathtt{st}'_{X,\Pow{V}Y}", swap] & \Pow{V}(\Pow{V}X \times Y) \ar[r, "\Pow{V}\mathtt{st}'_{X,Y}"] &  \Pow{V}^2(X \times Y) \ar[d, "n_{X \times Y}"]\\
  \Pow{V}(X \times \Pow{V}Y)  \ar[r, "\Pow{V}\mathtt{st}_{X,Y}"] & \Pow{V}^2(X \times Y) \ar[r, "n_{X \times Y}"] & \Pow{V}(X \times Y)
\end{tikzcd}
\]
commutes, since
\[
\begin{tikzcd}
   (\psi, \phi) \ar[r, mapsto]  \ar[d, mapsto] &  u_{\Pow{V}X}(\psi) \boxtimes \psi  \ar[r, mapsto] &  \Pow{V}\mathtt{st}'_{X,Y}(u_{\Pow{V}X}(\psi) \boxtimes \psi) \ar[d, mapsto] \\
    \psi \boxtimes u_{\Pow{V}Y}(\phi) \ar[r, mapsto] &  \Pow{V}\mathtt{st}_{X,Y}(  \psi \boxtimes u_{\Pow{V}Y}(\phi))  \ar[r, mapsto] & \psi \boxtimes \phi.
\end{tikzcd}
\]
Here $$n_{X\times Y}(\Pow{V}\mathtt{st}_{X,Y}(  \psi \boxtimes u_{\Pow{V}Y}(\phi))) = n_{X \times Y}(\Pow{V}\mathtt{st}'_{X,Y}(u_{\Pow{V}X}(\psi) \boxtimes \psi)) = \psi \boxtimes \phi$$ follows from $\mathtt{st}_{X,Y} \cdot \mathtt{ev}_{X \times Y} = \Delta_X \boxtimes  \mathtt{ev}_Y$, from $\mathtt{st}'_{X,Y} \cdot \mathtt{ev}_{X \times Y} = \mathtt{ev}_X \boxtimes \Delta_Y $, and from the monad law $n \cdot u_{\Pow{V}(-)} = \Id_{(-)}$.\\

By applying Theorem \ref{SCMon} we get the following result.
\begin{prop}\label{StrongPow}
  The category $\Alg{V}$ of algebras for the $V$-powerset monad $(\Pow{V}, u, n)$ admits a symmetric closed monoidal structure $\Tensor{\Pow{V}}$ with unit given by $\Pow{V}(1) = V$ such that the free functor
  $$\Pow{V} : \Sets \rightarrow \Alg{V}, \mbox{ } X \mapsto (\Pow{V}(X), n_X)$$
  becomes strong monoidal. Moreover, $\Tensor{\Pow{V}}$ classifies bimorphisms in the sense of Definition \ref{BimDef}.
\end{prop}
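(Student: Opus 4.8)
The plan is to obtain the statement as a direct application of Theorem \ref{SCMon}, instantiated at the base category $(\Sets, \times, 1)$ and the monad $(\Pow{V}, u, n)$. Accordingly, the argument splits into two phases: checking that every hypothesis of that theorem is met, and then reading off its conclusions and matching them one-by-one against the clauses of the proposition.

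First I would record that $\Sets$, equipped with its cartesian product, is a symmetric monoidal category which is moreover closed, the internal hom being the usual function sets $Z^Y$; together with the fact that $\Sets$ has all equalizers, this supplies the clause of Theorem \ref{SCMon} responsible for the \emph{closedness} of the induced structure on $\Alg{V}$. Next I would invoke the fact, recorded in the remark following Theorem \ref{SCMon}, that for any $\Sets$-monad the Eilenberg--Moore category is cocomplete (assuming the axiom of choice); in particular $\Alg{V} = \Sets^{\Pow{V}}$ has coequalizers of reflexive pairs, which is exactly the cocompleteness hypothesis needed to form the tensor product $\Tensor{\Pow{V}}$. The two remaining hypotheses---that $(\Pow{V}, u, n)$ is \emph{strong} and that it is \emph{commutative}---are precisely the content of the explicit diagram chases carried out immediately above the statement: the strength $\mathtt{st}_{X,Y}(x,\phi) = u_X(x) \boxtimes \phi$ satisfies the four coherence diagrams, and commutativity follows from the commutativity of $V$ by way of the co-strength $\mathtt{st}'_{X,Y}(\psi,y) = \psi \boxtimes u_Y$.

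With all hypotheses in place, Theorem \ref{SCMon} yields at once everything asserted: the representability of $\Bim(X \times Y, =)$ by an algebra $X \Tensor{\Pow{V}} Y$ (so that $\Tensor{\Pow{V}}$ classifies bimorphisms in the sense of Definition \ref{BimDef}), the symmetric monoidal structure on $\Alg{V}$ whose unit is the free algebra $(T1, m_1)$, the strong monoidality of the free functor $\Pow{V} : \Sets \rightarrow \Alg{V}$, and the closedness of this monoidal structure. The only point that still requires a word is the identification of the unit: since $\Pow{V}(1) = V^1 \simeq V$ and the monad multiplication $m$ is $n$, the unit $(\Pow{V}(1), n_1)$ is precisely the algebra $V$, as claimed.

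I do not anticipate a genuine obstacle here, as the essential labour---verifying strongness and commutativity---has already been discharged in the computations preceding the statement; the residual risk is merely bookkeeping: ensuring that the hypotheses of Theorem \ref{SCMon} are matched verbatim (closedness and equalizers in $\Sets$; coequalizers of reflexive pairs in $\Alg{V}$), and that the unit object $(\Pow{V}(1), n_1)$ is correctly identified with $V$ rather than with some reindexed copy of it.
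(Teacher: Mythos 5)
Your proposal is correct and takes essentially the same route as the paper: the paper's own proof consists precisely of the explicit verification of the strength and commutativity diagrams for $(\Pow{V}, u, n)$ carried out immediately before the statement, followed by an application of Theorem \ref{SCMon}, using the accompanying remark that Eilenberg--Moore categories of $\Sets$-monads (under the axiom of choice) have the coequalizers required by that theorem. Your checklist of hypotheses (cartesian closedness and equalizers in $\Sets$, coequalizers of reflexive pairs in $\Alg{V}$, strength, commutativity) and your identification of the unit $(\Pow{V}(1), n_1) \simeq V$ match the paper's argument exactly.
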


The last thing we have to do is to tune a little bit more the notion of bimorphism in our particular case, in order to have a more manageable formulation. As the toy example we played with in the first section suggests, the notion of bimorphism in categories in which the notion of "point" resembles the one in $\Sets$ seems to reduce to the "componentwise preserving structure" notion like the one we have in $\mathtt{Sup}$ and the one in algebra. This motivates us to introduce the following definition.
\begin{defin} \label{BimDef2}
Suppose $(X,\alpha), (Y, \beta), (Z,\gamma)$ are in $\Alg{V}$. A function $f: X \times Y \rightarrow Z$ is called a bimorphism if the following diagrams commute, for all $x,y \in X,Y$,
\[
\begin{tikzcd}[row sep=large, column sep=large]
\Pow{V}(X) \ar[r, "\simeq"] \ar[d, "\alpha",swap] & \Pow{V}(X )\times 1 \ar[d, "\alpha \times 1",swap] \ar[r, "\Pow{V}(\Id \times y)"] & \Pow{V}(X\times Y )  \ar[r, "\Pow{V}f"] & \Pow{V}(Z) \ar[d, "\gamma"]\\
X  \ar[r, "\simeq"] & X \times 1 \ar[r, "\Id \times y"] & X \times Y \ar[r, "f"] & Z
\end{tikzcd}
\]
\[
\begin{tikzcd}[row sep=large, column sep=large]
\Pow{V}(Y) \ar[r, "\simeq"] \ar[d, "\beta", swap] & 1 \times  \Pow{V}(Y ) \ar[d, " 1 \times \beta",swap] \ar[r, "\Pow{V}(x \times \Id )"] & \Pow{V}(X\times Y )  \ar[r, "\Pow{V}f"] & \Pow{V}(Z) \ar[d, "\gamma"]\\
Y  \ar[r, "\simeq"] & 1 \times Y \ar[r, "x \times \Id"] & X \times Y \ar[r, "f"] & Z.
\end{tikzcd}
\]
\end{defin}
\begin{re}
  Notice that, for $V = \mathbf{2}$, we recover the notion of bimorphism we gave for complete lattices in the first section.
\end{re}
Now we have not only one but two notions of bimorphism! Of course, as one might expect, the two notions coincide.
\begin{prop}\label{EquiBimo}
Suppose $(X,\alpha), (Y, \beta), (Z,\gamma)$ are in $\Alg{V}$. A function $f: X \times Y \rightarrow Z$ is a bimorphism according to Definition \ref{BimDef} iff it is so according to Definition \ref{BimDef2}.
\end{prop}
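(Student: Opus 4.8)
The plan is to pass from the abstract diagrams of Definition \ref{BimDef} to pointwise equations in $\Sets$ and to show that the tensorial strength of $\Pow{V}$ converts the curried maps $f_x = f(x,-)\colon Y \to Z$ and $f_y = f(-,y)\colon X \to Z$ into exactly the algebra-morphism conditions recorded in Definition \ref{BimDef2}. The only genuine computation is a single identity linking the strength to the functorial action of $\Pow{V}$; everything else is the bookkeeping of the canonical isomorphisms $\Pow{V}(X)\times 1 \simeq \Pow{V}(X\times 1)$ together with $f\cdot(\Id\times y)=f_y$ and $f\cdot(x\times\Id)=f_x$.

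Concretely, first I would recall from the appendix the explicit description $\mathtt{st}_{X,Y}(x,\phi)=u_X(x)\boxtimes\phi$ and $\mathtt{st}'_{X,Y}(\psi,y)=\psi\boxtimes u_Y(y)$. The heart of the argument is the identity $\Pow{V}f\cdot\mathtt{st}_{X,Y}(x,\phi)=\Pow{V}(f_x)(\phi)$, valid for all $x\in X$ and $\phi\in\Pow{V}Y$. This is checked pointwise at $z\in Z$ using the formula for $\Pow{V}$ on morphisms together with the fact that $u_X(x)(\tilde x)=k$ if $\tilde x=x$ and $\perp$ otherwise: one gets $\Pow{V}f(u_X(x)\boxtimes\phi)(z)=\bigvee_{f(\tilde x,y)=z}u_X(x)(\tilde x)\otimes\phi(y)=\bigvee_{f_x(y)=z}\phi(y)=\Pow{V}(f_x)(\phi)(z)$. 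The symmetric computation with the co-strength yields $\Pow{V}f\cdot\mathtt{st}'_{X,Y}(\psi,y)=\Pow{V}(f_y)(\psi)$.

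Given this, evaluating the first diagram of Definition \ref{BimDef} at a point $(x,\phi)$ turns it into $\gamma(\Pow{V}(f_x)(\phi))=f(x,\beta(\phi))=f_x(\beta(\phi))$; since maps in $\Sets$ are determined by their values, fixing $x$ and letting $\phi$ vary says precisely that $\gamma\cdot\Pow{V}(f_x)=f_x\cdot\beta$, i.e. that $f_x$ is a $\Pow{V}$-algebra morphism. Hence the first diagram of \ref{BimDef} holds iff $f_x$ is a morphism for every $x$, which after unwinding the identifications above is exactly the second diagram of \ref{BimDef2}. The same reasoning with the co-strength shows that the second diagram of \ref{BimDef} is equivalent to the first diagram of \ref{BimDef2}. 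Combining the two equivalences establishes the claim.

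I expect the main (indeed the only nontrivial) obstacle to be the key identity $\Pow{V}f\cdot\mathtt{st}_{X,Y}=((x,\phi)\mapsto\Pow{V}(f_x)(\phi))$: one must verify that the supremum defining $\Pow{V}f$ collapses to a supremum over the fibre of $f_x$ precisely because $u_X(x)$ is concentrated at $x$ with value the unit $k$. Once this is in hand, the equivalence of the two notions is a direct matching of pointwise equations, and no further structure of $V$ is needed.
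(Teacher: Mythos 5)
Your proof is correct, but it takes a genuinely different route from the paper's. You argue pointwise: from the explicit formulas $\mathtt{st}_{X,Y}(x,\phi)=u_X(x)\boxtimes\phi$ and $\Pow{V}(f)(\psi)(z)=\bigvee_{f(w)=z}\psi(w)$ you derive the key identity $\Pow{V}f\cdot\mathtt{st}_{X,Y}(x,-)=\Pow{V}(f_x)$ (the supremum collapses because $u_X(x)$ is concentrated at $x$ with value $k$, using $k\otimes v=v$ and $\perp\otimes v=\perp$), and then both definitions reduce to the single statement that every $f_x$ and every $f_y$ is a $\Pow{V}$-algebra morphism. The paper never computes with these formulas: it pastes the outer diagram of Definition \ref{BimDef2} out of three cells --- one ($\clubsuit$) commuting by the unit axiom of the strength, one ($\diamondsuit$) by naturality of $\mathtt{st}'$, and one ($\spadesuit$) which is precisely the corresponding diagram of Definition \ref{BimDef} --- so that one implication is a pasting of commuting cells, while the converse follows by cancelling the jointly epic family $(1_{\Pow{V}X}\times y\colon \Pow{V}(X)\times 1\rightarrow \Pow{V}(X)\times Y)_{y\in Y}$. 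The two arguments are parallel in substance (your key identity is exactly the concrete content of the paper's cells $\clubsuit$ and $\diamondsuit$ specialised along a point $x\colon 1\rightarrow X$), but the paper's version uses only abstract properties of a strong monad on $\Sets$ and so transfers verbatim to any such monad, whereas yours is tied to the particular formula for $\Pow{V}$ and its strength; in exchange, your computation is elementary and self-contained, and it makes the mechanism --- the unit presheaf concentrating the supremum over the fibre of $f_x$ --- completely explicit.
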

\begin{proof}
  Let us do the case in which we "fix" $y \in Y$, the other one is similar.\\
  The proof follows by contemplating the following diagram
\[
\begin{tikzcd}
         & \Pow{V}(X\times 1) \ar[rrrdd, bend left, "\Pow{V}(1_X \times y)"] &                         &  &  &   \\
         &  \clubsuit             &                        &  \diamondsuit &    & \\
\Pow{V}(X) \ar[uur, "\simeq"] \ar[dd, "\alpha",swap] \ar[rr, "\simeq"] &                 &   \Pow{V}(X) \times 1 \ar[uul, "\mathtt{st}'", swap]  \ar[r,"1_{\Pow{V}X} \times y"] \ar[dd, "\alpha \times 1", swap] &   \Pow{V}(X) \times Y \ar[dd, "\alpha \times \Id_Y",swap] \ar[r,"\mathtt{st}'"] & \Pow{V}(X \times Y) \ar[r,"\Pow{V}f"] & \Pow{V}(Z)  \ar[dd, "\gamma"] \\
          &                         &  &  & \spadesuit \\
    X  \ar[rr, "\simeq"] &  & X \times 1 \ar[r, "1 \times y"] & X \times \ar[rr, "f"]Y &  & Z.
\end{tikzcd}
\]
Here $\clubsuit$ commutes since $\Pow{V}$ is a strong monad while $\diamondsuit$ commutes because $\mathtt{st}'$ is a natural transformation.\par\medskip
Suppose $f$ is a bimorphism according to \ref{BimDef}, then $\spadesuit$ commutes, hence the outer diagram too. This implies that $f$ is a bimorphism according to \ref{BimDef2} too.\par\medskip
If $f$ is a bimorphism according to \ref{BimDef2}, then the outer diagram commutes, hence, for all $y\in Y$, we have
$$\gamma \cdot \Pow{V}f \cdot \mathtt{st}'\cdot 1_{\Pow{V}X} \times y = f \cdot \alpha \times \Id_Y \cdot 1_{\Pow{V}X} \times y.$$
Since $( 1_{\Pow{V}X} \times y : \Pow{V}(X) \times 1 \rightarrow \Pow{V}(X) \times Y)_{y \in Y},$ is a jointly epic family, we can jointly cancel them in the previous equation. Thus we obtain the commutativity of $\spadesuit $ which implies that $f$ is a bimorphism according to \ref{BimDef} too.\\
\end{proof}
\begin{re}
In Theorem \ref{SetMon} we proved that the category of algebras for this monads is equivalent to $\Coco{\Vcats}$. Hence the monoidal structure on $\Alg{V}$ transfers to a monoidal structure on $\Coco{\Vcats}$.\par\medskip
In particular, from the previous proposition, and since the equivalence $$\Alg{V} \simeq \Coco{\Vcats}$$ changes only the corresponding structures (and it leaves the underlying sets and arrows unchanged), we have that a $V$-functor $f : (X,a) \boxtimes (Y,b) \rightarrow (Z,c)$ is a bimorphism if, for all $x, y \in X,Y$, one has
$$f_x : (Y,b) \rightarrow (Z,c), \mbox{ } y \mapsto f(x,y),$$
$$ f_y : (X,a) \rightarrow (Z,c), \mbox{ } x \mapsto f(x,y),$$
are cocontinuous $V$-functor.\par\medskip
Since these kind of bimorphisms are classified by a monoidal structure on $\Coco{\Vcats}$, as described at the end of \cite{ECT}, by arguments similar to the one we used in Proposition \ref{MonBim}, we get that the monoidal structure on $\Coco{\Vcats}$, induced by the equivalence $\Alg{V} \simeq \Coco{\Vcats}$, and the one studied in \cite{ECT} coincide.
\end{re}

%\bibliographystyle{acm.bst}
%\bibliography{Bibliografia}

\end{document}